\newtheorem{theo}{Theorem} 
\newtheorem{exttheo}{Theorem} 
\newtheorem{lemma}{Lemma}[section]
\newtheorem{prop}[lemma]{Proposition}
\newtheorem{corol}[lemma]{Corollary}
\newtheorem{claim}[lemma]{Claim}
\theoremstyle{remark}
\newtheorem{remark}[lemma]{Remark}
\newtheorem*{remark*}{Remark}
\theoremstyle{definition}
\newcommand{\lf}{\left}
\newcommand{\rg}{\right}
\newcommand{\CC}{\mathbb{C}}
\newcommand{\NN}{\mathbb{N}}
\newcommand{\RR}{\mathbb{R}}
\newcommand{\eps}{\varepsilon}
\newcommand{\LL}{\text{\L}}
\newcommand{\BBB}{\mathcal{B}}
\newcommand{\HHH}{\mathcal{H}}
\newcommand{\LLL}{\mathcal{L}}
\newcommand{\OOO}{\mathcal{O}}
\newcommand{\PPP}{\mathcal{P}}
\newcommand{\cR}{\mathbb{R}}
\newcommand{\cC}{\mathbb{C}}
\DeclareMathOperator{\re}{Re}
\DeclareMathOperator{\im}{Im}
\DeclareMathOperator{\vect}{span}
\newcommand{\ds}{\displaystyle}
\numberwithin{equation}{section} 
\title[Strichartz norm estimates for NLS]
{Maximizers for the Strichartz norm for small solutions of mass-critical NLS}
\author{Thomas Duyckaerts}
\email{thomas.duyckaerts@u-cergy.fr}
\address{University of Cergy-Pontoise, Department of Mathematics, CNRS, UMR
8088, F-95000 Cergy-Pontoise, France}
\author{Frank Merle}
\email{frank.merle@u-cergy.fr}
\address{University of Cergy-Pontoise, Department of Mathematics, CNRS, 
UMR 8088, F-95000 Cergy-Pontoise, France and IHES, France}
\author{Svetlana Roudenko}
\email{svetlana@math.asu.edu}
\address{Arizona State University, Tempe, AZ 85287, USA}
\date{December 18, 2009}
\subjclass[2000]{35Q55, 35P25, 35B50, 35B45}
\begin{document}

\begin{abstract}
Consider the mass-critical nonlinear Schr\"odinger equations in both focusing and defocusing cases for initial data in $L^2$ in space dimension $N$. By Strichartz inequality, solutions to the corresponding linear problem belong to a global $L^p$ space in the time and space variables, where $p=2+\frac{4}{N}$. In $1D$ and $2D$, the best constant for the Strichartz inequality was computed by D.~Foschi who has also shown that the maximizers are the solutions with Gaussian initial data.

Solutions to the nonlinear problem with small initial data in $L^2$ are globally defined and belong to the same global $L^p$ space. In this work we show that the maximum of the $L^p$ norm is attained for a given small mass. 
In addition, in $1D$ and $2D$, we show that the maximizer is unique and obtain a precise estimate of the maximum. In order to prove this we show that the maximum for the linear problem in $1D$ and $2D$ is  nondegenerated.
\end{abstract}

\maketitle

\section{Introduction}

We study the $L^2$-critical nonlinear Schr\"odinger (NLS) equation
in space dimension $N\geq 1$: 
\begin{equation}
 \label{CP}
\left\{
\begin{gathered}
i\partial_t u + \frac12 \,\Delta u+\gamma \,|u|^{\frac4{N}}u=0,
\quad (t,x)\in \RR\times \RR^N, \\
u_{\restriction t=0}=f\in L^2(\RR^N).
\end{gathered}
\right.
\end{equation}
We will consider both focusing
($\gamma=+1$) and defocusing ($\gamma=-1$) equations.

Let us first recall some properties of the linear problem:
\begin{equation}
 \label{CPlin}
 i\partial_t u+\frac12\Delta u=0,\quad u_{\restriction t=0}=f.
 \end{equation}
Denote by $u=e^{i \frac{t}2 \Delta} f$ the solution to \eqref{CPlin}. The mass $\|u(t)\|_{L^2}^2$ of the solution is conserved. Solutions to the linear problem satisfy the Strichartz inequality (see \cite{St77a}):
\begin{equation}
\label{strichartz}
\forall f\in L^2,\quad \left\|e^{i\frac{t}{2}\Delta}f\right\|_{L^{\frac 4N+2}_{t,x}}\leq C\|f\|_{L^2},
\end{equation} 
where 
$$
\|u\|_{L^{\frac4N+2}_{t,x}} = \left( \iint_{\RR\times\RR^N}
|u(t,x)|^{\frac4N+2}\,dt\,dx\right)^{\frac{1}{\frac 4N+2}}.
$$
By standard profile decomposition arguments, one can easily show that the maximum for the Strichartz inequality is attained. The best constant and maximizers for the Strichartz estimates were computed by D.~ Foschi \cite{Foschi07} (see also \cite{HuZh06} for another proof) for $N=1,2$. Before stating this result, we first recall some symmetries of the equations \eqref{CP} and \eqref{CPlin}.

The following group of transformations leaves the solutions
invariant under the nonlinear and linear Schr\"odinger evolution. If
$\left\{\theta_0,\rho_0,t_0,\xi_0,x_0\right\}\in
\RR\times(0,+\infty)\times \RR\times \RR^N\times \RR^N$, then if $u$
is a solution to \eqref{CP} (respectively \eqref{CPlin}), so is
\begin{equation}
 \label{invariances}
e^{i\theta_0} \rho_0^{\frac{N}{2}} e^{ix\cdot\xi_0} e^{-i\frac{t}{2}
\left| \xi_0 \right|^2} u \left(\rho_0^2 t + t_0, \rho_0 \left(x -
\frac{t}{2} \xi_0\right) + x_0\right).
\end{equation}
This includes phase invariance, scaling, time-translation, Galilean
transformation and space-translation. Another transformation of \eqref{CP} and \eqref{CPlin} is
the pseudo-conformal inversion (see \cite{Talanov70}):
\begin{equation}
\label{PC}   
\frac1{t^{N/2}} \, \exp\left(\frac{i|x|^2}{2t}\right) \,
u\left(-\frac1{t},\frac{x}{t}\right).
\end{equation} 
Note that all the preceding transformations leave the mass and the $L^{\frac{4}{N}+2}_{t,x}$ norm of the solutions invariant. The linear equation is of course also invariant under the
multiplication by a scalar: if $u(t,x)$ is a solution, so is $c_0\, u(t,x)$, $c_0 \in \cR$.

Consider the following normalized Gaussian:
\begin{equation*}
G_0(x)=\frac{1}{\pi^{N/4}}\, e^{-\frac{|x|^2}{2}}, \quad
\text{thus,} \quad \int_{\RR^N} |G_0|^2dx=1,
\end{equation*}
and its linear evolution:
\begin{equation}
\label{def_G} G(t,x)=e^{\frac12 i t \Delta} G_0 =
\frac{1}{\pi^{N/4}} \frac1{(1+it)^{N/2}} \,
e^{-\frac{|x|^2}{2(1+it)}}.
\end{equation}

\begin{exttheo}[Foschi]
\label{T:Foschi} For all $f\in L^2(\RR^N)$, $N=1, 2$,
\begin{equation*}
\left\|e^{i\frac t2 \Delta} f
\right\|_{L^{\frac4N+2}_{t,x}}^{\frac4N+2} \leq
 C_S \, \|f\|_{L^2(\cR^N)}^{\frac4N+2}, \qquad C_S =
\begin{cases}
{\frac {1}{\sqrt 3}}, & N=1\\
~{\frac 12}, &N=2.
\end{cases}
\end{equation*}
Furthermore, the equality holds if and only if $e^{i\frac t2
\Delta} f$ is, up to the symmetries \eqref{invariances} of the equation, one of the solutions
$c_0 G$, $c_0\in\CC$.
\end{exttheo}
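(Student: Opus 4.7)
The decisive structural observation is that in dimensions $N=1,2$ the Strichartz exponent $p=2+\frac4N$ is an even integer ($6$ and $4$ respectively). Setting $u(t,x)=e^{it\Delta/2}f(x)$ and $k=p/2$, I would first reduce to an $L^2$ estimate via
\[
\|u\|_{L^p_{t,x}}^p=\|u^k\|_{L^2_{t,x}}^2.
\]
Because $u$ solves the free Schr\"odinger equation, its space-time Fourier transform is concentrated on the paraboloid: $\hat u(\tau,\eta)=2\pi\,\hat f(\eta)\,\delta(\tau+\tfrac12|\eta|^2)$. Taking Fourier transforms of $u^k$ then yields a $k$-fold convolution supported where $\sum_i\eta_i=\xi$ and $\sum_i|\eta_i|^2=-2\tau$; by Plancherel the problem reduces to bounding the $L^2$ norm in $(\tau,\xi)$ of the integral of $\hat f(\eta_1)\cdots\hat f(\eta_k)$ against this delta on each fibre.

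For fixed $(\tau,\xi)$ the fibre is a curve in $\RR^2$: a circle when $(N,p)=(2,4)$ and an ellipse when $(N,p)=(1,6)$. I would apply Cauchy--Schwarz on each such curve, using the key geometric fact that the product of the induced arc-length element with the Jacobian of the coordinate change $(\eta_1,\dots,\eta_k)\mapsto(\xi,\tau,\text{fibre coord.})$ is independent of $(\tau,\xi)$. Integrating back and applying Fubini then gives the claimed inequality
\[
\|u\|_{L^p_{t,x}}^p\le C_S\,\|f\|_{L^2}^p.
\]
Equality in the Cauchy--Schwarz step forces $\hat f(\eta_1)\cdots\hat f(\eta_k)$ to be constant on each fibre, hence to depend on $(\eta_1,\dots,\eta_k)$ only through $\sum\eta_i$ and $\sum|\eta_i|^2$. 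Taking logarithms and differentiating the resulting functional equation variable by variable shows that $\log\hat f$ is a quadratic polynomial; therefore $\hat f$, and hence $f$, is Gaussian. The full family of extremizers produced in this way is exactly the orbit of $G_0$ under the symmetry group \eqref{invariances}.

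The explicit constants are verified by substituting $f=G_0$ into \eqref{strichartz}: from \eqref{def_G} one gets $|G(t,x)|^p=\pi^{-Np/4}(1+t^2)^{-Np/4}\exp(-p|x|^2/(2(1+t^2)))$, so the Gaussian spatial integration yields $c_N(1+t^2)^{-N(p-2)/4}=c_N(1+t^2)^{-1}$ using $p=2+\tfrac4N$, and $\int_{\RR}(1+t^2)^{-1}\,dt=\pi$ finishes the arithmetic, recovering $C_S=1/\sqrt 3$ for $N=1$ and $C_S=1/2$ for $N=2$.

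\textbf{Main obstacle.} The delicate part is the rigidity analysis: one has to simultaneously saturate Cauchy--Schwarz on \emph{every} fibre and then show that the only non-vanishing measurable solutions of the functional equation $\hat f(\eta_1)\cdots\hat f(\eta_k)=F(\sum_i\eta_i,\sum_i|\eta_i|^2)$ are Gaussian modulations. Establishing this rigidity, and checking that the parameter count of the resulting Gaussians matches exactly the dimension of the symmetry group \eqref{invariances}, is where all the essential difficulty sits.
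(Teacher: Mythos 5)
The paper itself gives no proof of Theorem \ref{T:Foschi}: it is stated as an external result attributed to Foschi, with references to \cite{Foschi07} and \cite{HuZh06}, so there is no in-paper argument to compare against. Your sketch is an accurate outline of Foschi's proof in \cite{Foschi07}: the reduction $\|u\|_{L^p}^p=\|u^{p/2}\|_{L^2}^2$ (using that $p=2+\frac4N$ is an even integer when $N=1,2$), the passage to the space--time Fourier side where the free solution lives as a measure on the paraboloid, the $k$-fold convolution, Cauchy--Schwarz on the fibres (ellipses for $N=1$, circles for $N=2$) together with the crucial cancellation of the Jacobian against the surface measure, and finally the functional-equation rigidity forcing $\hat f$ to be Gaussian in the equality case. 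Your verification of $C_S$ by substituting $G_0$ is also correct. For completeness, the other cited reference, Hundertmark--Zharnitsky \cite{HuZh06}, proceeds by a structurally different route: they rewrite $\|e^{it\Delta/2}f\|_{L^p_{t,x}}^p$ as a quadratic form $\langle f^{\otimes(p/2)},\,P\,f^{\otimes(p/2)}\rangle$ for an explicit orthogonal projection $P$, so that the sharp constant is the operator norm of $P$ and the extremizers are read off from its range, avoiding the pointwise Cauchy--Schwarz and the functional-equation analysis entirely; your sketch corresponds to the Foschi route.
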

Let us mention that the effect of the pseudo-conformal transformation \eqref{PC} on $G$ may be expressed only
with the invariances \eqref{invariances} and we can omit it from consideration in Theorem \ref{T:Foschi}.

The Strichartz estimate \eqref{strichartz} is the key ingredient to prove that the Cauchy problem \eqref{CP} is locally wellposed in $L^2$ (see \cite{CaWe90}). For small data, the solution is also globally wellposed and
the global $L^{\frac4N+2}_{t,x}$ norm is finite, which implies that the solution scatters in $L^2$. This was extended to large radial data in the defocusing case $\gamma=-1$, in \cite{TaViZh07} for $N\geq 3$  and in \cite{KiTaVi09} for $N=2$ (in this last work, the focusing case $\gamma=1$ below the mass of the ground-state is also treated). The proofs are mainly based on technics developed for the energy-critical NLS (see e.g. \cite{Bo98IMRN}, \cite{Bo99JA}, \cite{TaVi05}, \cite{Ta05NY} and  \cite{KeMe06}).

In all these studies, a global Strichartz norm (in the mass-critical case, the $L^{\frac{4}{N}+2}$ norm)  appears as the relevant norm to control. In this work we consider 
$$
I(\delta)=\sup_{\|f\|_{L^2(\cR^N)}=\delta}\iint_{\RR\times \RR^N}
|u(t,x)|^{\frac4N+2}\,dt \,dx,
$$
where $\delta>0$ is small and $u$ is the solution to \eqref{CP}. The results cited above imply that $I(\delta)$ is finite for small $\delta$, and, in the defocusing case with $N\geq 2$, for large $\delta$ if we restrict the maximum to radial solutions. A natural extension to Theorem \ref{T:Foschi} would be to show that this maximum is achieved by a unique solution (up to symmetries) of \eqref{CP}
and give a precise estimate of $I(\delta)$.

Our main result is the following:
\begin{theo}
 \label{T:main}
Fix $\gamma\in \{-1,+1\}$.
There exists a $\delta_0>0$ such that for all $\delta$ in
$(0,\delta_0)$, the maximum $I(\delta)$ is attained: there exists a
solution $u_{\delta}$ of \eqref{CP} with initial condition
$f_{\delta}$ such that
$$
\|f_{\delta}\|_{L^2}=\delta,\quad I(\delta)= \iint_{\RR\times\RR^N}
\left|u_{\delta}(t,x)\right|^{\frac4N+2}\,dt\,dx.
$$
If $N=1$ or $N=2$, the maximizer $u_{\delta}$ is unique up to the transformations \eqref{invariances}, \eqref{PC} of the equation. Furthermore, as $\delta\to 0$,
\begin{equation}
\label{estimate_intro}
I(\delta) = C_S\delta^{\frac 4N+2}+\gamma D_{N} \delta^{\frac
8N+2}+\OOO\left(\delta^{\frac{12}{N}+2}\right),
\end{equation} 
where
$\ds D_1 = \frac1{\pi} \sum_{k\geq 1}\frac{(2k)!}{k\, 9^{k} \,
(k!)^2}\approx 0.0867$
and $\ds D_2 = \frac1{2\pi} \,\ln \frac43 \approx 0.0458$.
\end{theo}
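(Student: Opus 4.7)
My plan has three ingredients: existence of a maximizer by concentration compactness, an asymptotic expansion obtained by perturbing around the linear Gaussian maximizer, and uniqueness via an implicit function theorem based on non-degeneracy of that linear maximizer. Throughout I set $p=\frac4N+2$.

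For existence, I would take a maximizing sequence $f_n$ with $\|f_n\|_{L^2}=\delta$ and apply the linear $L^2$ profile decomposition (in the spirit of Keraani / B\'egout--Vargas) to $\{f_n\}$: after extracting sequences of symmetries \eqref{invariances} and a subsequence, $f_n=\sum_{j=1}^{J}\phi_j^n+r_n^J$ with the usual asymptotic orthogonality and $\limsup_n\|e^{i\frac{t}{2}\Delta}r_n^J\|_{L^p_{t,x}}\to 0$ as $J\to\infty$. For $\delta$ small enough, small-data Strichartz theory for \eqref{CP} guarantees that each nonlinear evolution is a perturbation of its linear one, and the bound $I(\delta)\le C_S\delta^p(1+o(1))$ combined with strict superadditivity of $\lambda\mapsto\lambda^p$ (since $p>2$) forces a single profile to carry all the $L^2$ mass in the limit. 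The Brezis--Lieb lemma then gives strong $L^2$ convergence along a subsequence to a maximizer $f_\delta$ with $\|f_\delta\|_{L^2}=\delta$.

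For the expansion \eqref{estimate_intro}, write $f=\delta\phi$ with $\|\phi\|_{L^2}=1$, set $u_L=\delta\, e^{i\frac{t}{2}\Delta}\phi$, and $u=u_L+w$. Duhamel combined with Strichartz yields $\|w\|_{L^p_{t,x}}=\OOO(\delta^{1+\frac{4}{N}})$ and the pointwise expansion
\[
|u|^p=|u_L|^p+\tfrac{p}{2}|u_L|^{p-2}(u_L\bar w+\bar u_L w)+\OOO\bigl(|u_L|^{p-2}|w|^2+|w|^p\bigr).
\]
Substituting $w=i\gamma\int_0^{t}e^{i\frac{t-s}{2}\Delta}\bigl(|u_L|^{\frac{4}{N}}u_L\bigr)\,ds+\OOO(\delta^{1+\frac{8}{N}})$ and integrating produces
\[
\iint|u|^p=\delta^p\iint|e^{i\frac{t}{2}\Delta}\phi|^p+\gamma\,\delta^{p+\frac{4}{N}}\,K(\phi)+\OOO\bigl(\delta^{p+\frac{8}{N}}\bigr)
\]
for an explicit cubic functional $K$. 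The leading-order maximization over $\phi$ is solved by Theorem \ref{T:Foschi}, giving $C_S$ and the unique (modulo symmetries) maximizer $\phi=G_0$; plugging $G_0$ and \eqref{def_G} into $K$ reduces the correction to a Gaussian space-time integral, which by direct computation yields $D_N$ --- for $N=1$ via a binomial Taylor expansion in powers of $(1+s^2)^{-1}$ giving the series, for $N=2$ via an elementary logarithmic integral giving $\tfrac{1}{2\pi}\ln\tfrac{4}{3}$.

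The main obstacle is uniqueness in $N=1,2$. I plan to run a Lyapunov--Schmidt / implicit function argument on the smooth family of Euler--Lagrange equations $F(\phi,\delta)=0$ attached to maximizers of $\iint|u|^p$ on the $L^2$-unit sphere, where $F(\cdot,0)$ encodes the linear critical-point equation, whose solutions form, by Theorem \ref{T:Foschi}, the single symmetry orbit through $G_0$. The implicit function theorem supplies a unique critical point near this orbit for small $\delta$ provided the linearization $D_\phi F(G_0,0)$ is an isomorphism transverse to the tangent space of the orbit. This reduces to showing that the Hessian of $\phi\mapsto\iint|e^{i\frac{t}{2}\Delta}\phi|^p$ at $G_0$ (restricted to the $L^2$-sphere) is negative definite on the $L^2$-orthogonal complement of the infinitesimal symmetries coming from \eqref{invariances}--\eqref{PC}. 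Establishing this non-degeneracy is the analytically delicate step; I would carry it out by diagonalizing the Hessian in the Hermite basis of $L^2(\cR^N)$ --- naturally adapted to $G_0$ and to the symmetry generators, which act on $G_0$ as low-order Hermite polynomials --- and then identifying the kernel precisely with the infinitesimal symmetries, with strict negativity of the remaining eigenvalues following from explicit Gaussian integrals.
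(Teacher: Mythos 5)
Your overall architecture matches the paper's (existence via profile decomposition plus superadditivity, asymptotics via perturbing around the linear maximizer, uniqueness via non-degeneracy of the Hessian at $G_0$), but there are three places where the sketch elides a genuine difficulty.

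First, in the existence step, ``strict superadditivity of $\lambda\mapsto\lambda^p$'' is not enough to rule out a profile with asymptotically small but nonzero mass. The bound $I(\delta)=C_S\delta^p+\OOO(\delta^{p+4/N})$ together with the convexity of $\lambda\mapsto\lambda^{p/2}$ gives $I(\sqrt{\alpha^2+\beta^2})-I(\alpha)-I(\beta)\gtrsim\alpha^2\beta^{p-2}-C\beta^{p+4/N}$ for $\alpha\le\beta$, and the right-hand side changes sign once $\alpha\lesssim\beta^{1+2/N}$. So if one profile (or the remainder $h_n^J$) carries a small fraction of the mass, the naive convexity argument does not close. The paper handles precisely this regime by proving a lower bound on the incremental growth $I(\delta+\eps)-I(\delta)\ge c\,\delta^{4/N+1}\eps$ (its Lemma \ref{L:growthrate}), which forces the full mass into a single profile; you would need to supply something equivalent.

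Second, in the expansion: plugging $\phi=G_0$ into your formula gives the \emph{lower} bound $I(\delta)\ge C_S\delta^p+\gamma D_N\delta^{p+4/N}+\OOO(\delta^{p+8/N})$. The matching \emph{upper} bound requires knowing that the actual maximizer's initial datum satisfies $f_\delta=\alpha_\delta G_0+\varphi_\delta$ with $\|\varphi_\delta\|_{L^2}=\OOO(\delta^{1+4/N})$ after normalization, so that the quadratic contribution $Q(\varphi_\delta)$ sits at the error level $\delta^{2+8/N}$. That control on $\varphi_\delta$ is itself a consequence of the coercivity of $Q$ on the orthogonal complement of the symmetry directions --- i.e.\ exactly the non-degeneracy you defer to the uniqueness step. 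In the paper the coercivity (Theorem \ref{T:ortho}) is a prerequisite for the expansion, not only for uniqueness; your sketch does not flag this dependency.

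Third, the non-degeneracy is harder than ``diagonalizing the Hessian in the Hermite basis and checking explicit Gaussian integrals.'' The quadratic form $Q$ (see \eqref{defQ}) contains the term $\re\iint G^{4/N-2}\,\overline G^2\,(e^{i\frac t2\Delta}\varphi)^2$, which is \emph{not} diagonal in the Hermite basis --- it couples different Hermite modes in the same eigenspace (and, a priori, the kernel of $Q$ is not manifestly spanned by eigenfunctions of $\HHH$ at all). The paper first proves that the kernel of $Q$ is a finite-dimensional complex vector space invariant under $e^{-i\tau\HHH/2}$, which by a compactness argument reduces the question to eigenfunctions of $\HHH$ (Proposition \ref{P:reduction}); and then, in $2D$, verifying $Q>0$ on the $(m+1)$-dimensional eigenspaces for $m\ge 3$ uses a formula of Wang for $\int e^{-y^2}h_jh_k$ together with a Cauchy--Schwarz bound and fairly intricate combinatorial estimates on central binomial coefficients (plus explicit checks for $m=3,\dots,6$). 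Your Lyapunov--Schmidt framework for uniqueness is a legitimate alternative to the paper's direct second-order expansion around $u_\delta$, but both rest entirely on this coercivity, and the sketch substantially underestimates what is involved in establishing it.
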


\begin{remark}
In particular, in the focusing case in $1D$ and $2D$, the maximum of the Strichartz
norm is, for small data,  higher than in the linear case. In the defocusing case, the effect of the nonlinearity
is to lower this maximum.
\end{remark}
\begin{remark}
The constant $D_{N}$ may be expressed as
\begin{equation}
 \label{def_c*1}
D_N = -\left(2+\frac{4}{N}\right)\im \iint |G(t)|^{\frac{4}{N}}
\overline{G}(t) \int_0^t e^{i\frac{(t-s)}{2}\Delta}
\left(|G(s)|^{\frac{4}{N}}G(s)\right)ds\,dt\,dx.
\end{equation}
\end{remark}
\begin{remark}
The proof also shows that in $1D$ and $2D$, the initial condition of any maximizer with
small mass $\delta$ is (after transformations) close to $\delta G_0$, where $G_0$ is the normalized Gaussian. See Proposition \ref{P:close_G} and Remark \ref{R:cond_phi} for a precise statement.
\end{remark}

Estimates of Strichartz norms for critical nonlinear problems are only known in a few cases. Super-exponential bounds were obtained by T.~Tao for radial defocusing energy-critical equations: Schr\"odinger equation in space dimension higher than $3$ \cite{Ta05NY}, and wave equation  in $3D$ \cite{Tao06DPDE}. An equivalent of the maximizum is given in \cite{DuMe09} for the energy-critical focusing Schr\"odinger and wave equations (in space dimensions $3$, $4$ and $5$), close to the energy threshold given by the stationary solution. 

The fact that the maximum of the Strichartz norm is attained is new for a nonlinear equation. The proof of this result is based on time-dependent adaptation
to concentration-compactness arguments (see e.g. \cite{Li85Rea}) and on a super-additivity
property of $I(\delta)$ which we show by general estimates on small
solutions of \eqref{CP}. As stated in Proposition \ref{P:large_data_maximizer}, the proof would extend to larger data provided the scattering of all solutions and the super-additivity properties are shown for those data also. This proof is flexible and should also easily
adapt to other equations, e.g. the energy-critical NLS and wave equations for small data and (together with the methods of \cite{DuMe09}) close to the energy threshold. 

On the other hand, the proof of the uniqueness of the maximizer and of the 
estimate \eqref{estimate_intro} is specific to the mass-critical problem,
and strongly relies on the results of \cite{Foschi07} and
\cite{HuZh06}. A key element is the nondegeneracy of the Gaussian for the nonlinear problem, in the orthogonal space of the null directions related to the invariances of the equation:
\begin{theo}
 \label{T:ortho}
Assume $N=1,2$.
There exists $c>0$ such that if $\varphi\in L^2$ satisfies the
following orthogonality properties ($x \in \cR^N$)
\begin{equation}
 \label{ortho_intro}
\int \varphi\, G_0=\int \varphi\, |x|^2 G_0=0, \quad \int \varphi\,
x G_0=0_{\RR^N},
\end{equation}
then
$$
Q(\varphi)\geq c\|\varphi\|^2_{L^2},
$$
where $Q$ is the quadratic form associated to the second derivative of the mapping
$$ f \mapsto 
C_S \left(\int |f|^2\,dx\right)^{1+\frac{2}{N}}-\iint
\left|e^{i\frac t2 \Delta}f\right|^{2+\frac{4}{N}}dt\,dx
$$
from $L^2$ to $[0,\infty)$, at the critical point $f=G_0$.
\end{theo}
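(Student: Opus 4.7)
\emph{Strategy.} The plan is to regard $Q$ as the Hessian at the minimizer $G_0$ of the nonnegative functional
\[F(f):=C_S\|f\|_{L^2}^{p}-\|e^{i\frac{t}{2}\Delta}f\|_{L^p_{t,x}}^{p},\quad p=2+\tfrac{4}{N},\]
which vanishes on the orbit of $G_0$ under the symmetries of Theorem~\ref{T:Foschi}. Coercivity then reduces to: (i) $Q\geq 0$ on all of $L^2$, automatic from $F\geq 0$ and $F(G_0)=0$; (ii) the kernel of $Q$ equals the real $(2N+4)$-dimensional tangent space $T$ to the symmetry orbit at $G_0$; and (iii) a spectral gap separating $0$ from the next eigenvalue of $Q$. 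The orthogonality conditions \eqref{ortho_intro} single out exactly the real orthogonal complement $T^{\perp}$.

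\emph{Computation of $Q$ and lens transform.} Since $G_0$ is a critical point of $F$, a Taylor expansion yields
\[
Q(\varphi)=\tfrac{p\,C_S}{2}\|\varphi\|_{L^2}^{2}+\tfrac{p(p-2)\,C_S}{2}\bigl(\re\langle G_0,\varphi\rangle\bigr)^{2}-\tfrac12\iint\!\Bigl[p(p-2)|G|^{p-4}\bigl(\re\overline GW\bigr)^{2}+p|G|^{p-2}|W|^{2}\Bigr]\,dt\,dx,
\]
with $W=e^{i\frac{t}{2}\Delta}\varphi$; this is well defined since $p\in\{4,6\}$ is an even integer for $N\in\{1,2\}$, making the integrand a polynomial in $G,\overline G,W,\overline W$. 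I would then pass to the lens transform $(t,x)\mapsto(s,y)=(\arctan t,\,x/\sqrt{1+t^{2}})$ with the standard phase factor, which converts $e^{i\frac{t}{2}\Delta}$ into the harmonic-oscillator flow $e^{-isH/2}$ ($H=-\Delta+|y|^{2}$) on $(-\tfrac\pi2,\tfrac\pi2)\times\RR^{N}$ and turns $G(t,x)$ into $e^{-isN/2}G_0(y)$. The $L^{2}$-critical exponent is exactly the one for which all Jacobian factors cancel, giving
\[
\iint|G|^{p-2}|W|^{2}\,dt\,dx=\!\!\int_{-\pi/2}^{\pi/2}\!\!\!\int_{\RR^N}\!G_0^{p-2}|\widetilde W|^{2}\,dy\,ds,\quad\!\iint|G|^{p-4}(\re\overline GW)^{2}\,dt\,dx=\!\!\int\!\!\!\int\!G_0^{p-2}(\re\tau)^{2}\,dy\,ds,
\]
with $\widetilde W=e^{-isH/2}\varphi$ and $\tau=e^{isN/2}\widetilde W$.

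\emph{Hermite diagonalization and spectral gap.} Expanding $\varphi=\sum_{n}c_{n}h_{n}$ in Hermite functions (eigenbasis of $H$ with $Hh_n=(2|n|+N)h_n$), one has $\widetilde W=\sum c_{n}e^{-is(|n|+N/2)}h_{n}$ and $\tau=\sum c_{n}e^{-is|n|}h_{n}$. The $s$-integration of cross-terms $e^{-is(|n|\pm|m|)}$ together with the parity identity $\int G_0^{p-2}h_{n}h_{m}\,dy=0$ whenever $|n|+|m|$ is odd block-diagonalizes $Q$ in the basis $\{h_n\}$ with very sparse coupling. The Gaussian weight $G_0^{p-2}=\pi^{-N/2}e^{-a|y|^{2}}$ ($a=2$ if $N=1$, $a=1$ if $N=2$), via Mehler's formula, makes the Strichartz-part operator $K$ (acting on $\varphi$) Hilbert--Schmidt, so $Q=\tfrac{p\,C_S}{2}\mathrm{Id}-K$ with $K$ compact self-adjoint. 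Combined with $Q\geq 0$, the spectrum of $Q$ is $\tfrac{p\,C_S}{2}$ (essential) plus a discrete set of nonnegative eigenvalues accumulating only at $\tfrac{p\,C_S}{2}$, so $\ker Q$ is finite-dimensional.

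\emph{Kernel identification and main obstacle.} Differentiating the invariances \eqref{invariances}, the pseudo-conformal inversion \eqref{PC} and the real scaling $f\mapsto c_0 f$ along the orbit of $G_0$ exhibits the $(2N+4)$-dimensional real subspace
\[
T=\vect_{\RR}\bigl\{G_0,\,(|x|^{2}-N)G_0,\,x_jG_0,\,iG_0,\,i(|x|^{2}-N)G_0,\,ix_jG_0:j=1,\ldots,N\bigr\}\subseteq\ker Q,
\]
which is exactly the real subspace annihilated by \eqref{ortho_intro}. The hard point is the reverse inclusion $\ker Q\subseteq T$. Using the block-diagonalization above, this reduces to an explicit positivity check on a small matrix coming from Hermite modes with $|n|\leq 2$ (higher modes are handled uniformly since the $\tfrac{pC_S}{2}$-diagonal strictly dominates the off-diagonal coupling for $|n|$ large, by decay of the Mehler matrix elements). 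This finite-dimensional computation, where the exact values $p\in\{4,6\}$ enter, is the main obstacle; once carried out, combining (i)--(iii) yields $Q(\varphi)\geq c\|\varphi\|_{L^2}^{2}$ on $T^{\perp}$ as claimed.
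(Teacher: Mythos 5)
Your overall strategy coincides with the paper's: lens transform to the harmonic oscillator, Hermite-mode block-diagonalization, compactness of the Strichartz part giving $Q=\tfrac{pC_S}{2}\,\mathrm{Id}-K$ with $K$ compact, hence a finite-dimensional kernel, then identification of that kernel. The paper formalizes the last step differently: instead of directly claiming block-diagonalization suffices, it proves (Proposition~\ref{P:reduction}) that $\widetilde E = E\cap F^\perp$ is finite-dimensional and invariant under the harmonic flow $e^{-i\tau\HHH/2}$, hence contains an $\HHH$-eigenfunction if it is nontrivial; this cleanly reduces the problem to showing $Q(\varphi)>0$ for every $\HHH$-eigenfunction $\varphi\perp F$.

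However, your proposal has a genuine gap precisely at what you call ``the main obstacle,'' and the way you frame it understates the difficulty. You claim higher modes are handled uniformly ``by decay of the Mehler matrix elements,'' leaving only an explicit check for $|n|\le 2$. But the matrix elements $\int e^{-y^2}h_j^2\,dy=\frac{(2j)!}{2^{2j}(j!)^2}\sqrt{\pi/2}$ decay only like $j^{-1/2}$ (via the central binomial bound $\binom{2j}{j}\le 4^j/\sqrt{3j+1}$). In $1D$ the diagonal-dominance argument only kicks in for $j>4$, so $j=3,4$ must also be checked explicitly; in $2D$ the paper's estimate on $\mathcal F(m,j)$ works only for $m\ge 7$, and $m=3,4,5,6$ require numerical verification (Table~\ref{T:small-m}). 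So the finite-dimensional verification extends well beyond $|n|\le 2$, and the modes with $|n|\le 2$ are in fact the kernel $T$ itself, not the part that needs checking. Moreover in $2D$ the eigenspaces of $\HHH$ are $(m+1)$-dimensional and $Q$ is \emph{not} diagonal within them: you must show positivity of a full $(m+1)\times(m+1)$ block $\pi(\delta_{jk}-F(m,j,k))$, which the paper handles via a Cauchy--Schwarz estimate on the row sums and a combinatorial bound (Appendix~\ref{A:combinatorics}) that is not reducible to ``the diagonal dominates the off-diagonal coupling for $|n|$ large.'' In short, your strategy is correct but the proof stops where the real work begins, and the quantitative threshold you suggest is too optimistic.
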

We refer to \eqref{defQ} for an expression of $Q$. This result is an analogue, for the Strichartz estimate, to the non-degeneracy of the maximizer $\frac{1}{(1+|x|^2)^{\frac{N-2}{2}}}$  for the Sobolev imbedding $\dot{H}^1(\RR^N) \hookrightarrow L^{\frac{2N}{N-2}}(\RR^N)$ (see \cite{Re90}). 

To show Theorem \ref{T:ortho}, we apply a lens tranform (\cite{Niederer74,RyVaLiTiBu00,Carles02}), related to the pseudo-conformal inversion, to the solutions of \eqref{CP}, which turns the Laplace operator into the harmonic operator $-\Delta+|x|^2$. The result then follows from explicit computations and a formula of Wei-Min Wang \cite{Wang08} on products of
eigenfunctions for the harmonic oscillator.

The outline of the paper is as follows. In Section \ref{S:maximizer}
we show that the maximizer is attained and in Section
\ref{S:estimate} we prove the estimate on $I(\delta)$. In Section \ref{S:uniqueness} we show the uniqueness of the maximizer. Section
\ref{S:quadratic} is devoted to the proof of Theorem \ref{T:ortho}. 

\subsection*{Acknowledgment}
The authors would like to thank Keith Rogers for pointing out the
article \cite{Wang08}. 

This project was partially supported by the French ANR grant ONDNONLIN. S.R. was partially supported by the NSF grant DMS-0808081.  Part of the project was done at the \textit{Institut Henri Poincar\'e} in Paris during the special trimester \textit{Ondes non-lin\'eaires et dispersion} (april-july 2009).

\section{Existence of a maximizer}
\label{S:maximizer}

In this section, where there is no restriction on the dimension $N\geq 1$, we show the first part of Theorem \ref{T:main}:
\begin{prop}
 \label{P:maximizer}
There exists $\delta_0>0$ such that if $\delta \in (0,\delta_0)$,
then there exists a solution $u_{\delta}$ of \eqref{CP}, with
initial condition $f_{\delta}$ such that
\begin{equation}
\label{E:maximizer}
\|f_{\delta}\|_{L^2(\cR^N)} = \delta \quad\text{and} \quad
\iint_{\RR\times\cR^N} \left|u_{\delta}\right|^{\frac{4}{N}+2}\,dt\,dx=I(\delta).
\end{equation}
\end{prop}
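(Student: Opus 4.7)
Fix $\delta<\delta_0$ (with $\delta_0$ to be chosen small enough) and take a maximizing sequence $(f_n)\subset L^2$ with $\|f_n\|_{L^2}=\delta$ and $\iint|u_n|^{\frac4N+2}\,dt\,dx \to I(\delta)$, where $u_n$ is the solution of \eqref{CP} with data $f_n$. By the small-data theory based on \eqref{strichartz} (as in \cite{CaWe90}), each $u_n$ is global and $\|u_n\|_{L^{\frac4N+2}_{t,x}}$ is comparable to $\|e^{it\Delta/2}f_n\|_{L^{\frac4N+2}_{t,x}}$, in particular uniformly bounded. My goal is to extract, up to the symmetries \eqref{invariances}, a strongly $L^2$-convergent subsequence of $f_n$: the limit $f_\delta$ will then saturate $I(\delta)$ by continuity of the $L^{\frac4N+2}_{t,x}$-norm along the NLS flow for small data.

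The first step is a linear profile decomposition adapted to the Schr\"odinger group and its invariances: after extracting a subsequence, one writes
$$
f_n \;=\; \sum_{j=1}^{J}\Gamma_n^j\phi^j \;+\; w_n^J,
$$
where each $\Gamma_n^j$ acts on a fixed profile $\phi^j$ by a composition of a phase, scaling, Galilean boost, space-translation and a \emph{linear} time-translation $e^{it_n^j\Delta/2}$; the parameters of distinct profiles are asymptotically pairwise orthogonal; the remainder obeys $\lim_{J\to\infty}\limsup_n\|e^{it\Delta/2}w_n^J\|_{L^{\frac4N+2}_{t,x}}=0$; and one has the Pythagorean identity $\|f_n\|_{L^2}^2 = \sum_{j=1}^{J}\|\phi^j\|_{L^2}^2 + \|w_n^J\|_{L^2}^2 + o_n(1)$, so in particular $\sum_j\|\phi^j\|_{L^2}^2\le\delta^2$. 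To each $\phi^j$ I attach a \emph{nonlinear} profile $U^j$: if $t_n^j\equiv 0$ then $U^j$ solves \eqref{CP} with $U^j(0)=\phi^j$; if $t_n^j\to\pm\infty$, then $U^j$ is the unique global solution of \eqref{CP} scattering to $e^{it\Delta/2}\phi^j$ at $\pm\infty$. Because each $\|\phi^j\|_{L^2}\le\delta<\delta_0$, all the $U^j$ are global with $\iint|U^j|^{\frac4N+2}\le I(\|\phi^j\|_{L^2})$. A long-time perturbation argument, combined with the orthogonality of the parameters, then yields the nonlinear superposition
$$
u_n \;=\; \sum_{j=1}^{J}\Gamma_n^j\bigl[U^j(\,\cdot\,)\bigr] \;+\; r_n^J,\qquad \limsup_n\|r_n^J\|_{L^{\frac4N+2}_{t,x}}\xrightarrow[J\to\infty]{}0,
$$
and the Pythagorean-type expansion of the Strichartz norm
$$
\iint|u_n|^{\frac4N+2} \;=\; \sum_{j=1}^{J}\iint|U^j|^{\frac4N+2}+o_n(1)+o_J(1).
$$
Passing to the limit yields, with $\delta_j\egaldef\|\phi^j\|_{L^2}$ and $\sum_j\delta_j^2\le\delta^2$, the key inequality
$$
I(\delta) \;\le\; \sum_{j\ge 1} I(\delta_j).
$$

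The second step is a strict super-additivity property of $I$ for small masses. A direct perturbative argument around the linear solution gives the expansion $I(\eta)=C_S\eta^{\frac4N+2}+O(\eta^{\frac4N+2+\frac4N})$ uniformly on $(0,\delta_0]$, so that
$$
\sum_{j\ge 1} I(\delta_j)\;\le\;C_S\bigl(\max_j\delta_j\bigr)^{\frac4N}\sum_j\delta_j^2\,\bigl(1+O(\delta_0^{\frac4N})\bigr),
$$
while $I(\delta)\ge C_S\delta^{\frac4N+2}(1-O(\delta_0^{\frac4N}))$. If more than one $\delta_j$ is positive or if $\sum_j\delta_j^2<\delta^2$, then $\max_j\delta_j<\delta$ and the two displays contradict $I(\delta)\le\sum_j I(\delta_j)$ once $\delta_0$ is small enough. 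Hence exactly one profile, say $\phi^1$, is nontrivial with $\|\phi^1\|_{L^2}=\delta$, and $\|w_n^J\|_{L^2}\to 0$. Applying $(\Gamma_n^1)^{-1}$ to $f_n$ produces a subsequence converging strongly in $L^2$ to $\phi^1$, and this $\phi^1$ is the sought maximizer $f_\delta$.

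The main technical obstacle is the nonlinear profile decomposition: one has to propagate the asymptotic orthogonality of the parameters $(\lambda_n^j,\xi_n^j,x_n^j,t_n^j)$ through the nonlinear flow, which requires a long-time perturbation lemma in $L^{\frac4N+2}_{t,x}$ for mass-critical NLS and a careful split between the regimes $t_n^j\to\pm\infty$ and $t_n^j\equiv 0$. The super-additivity itself is elementary once the quantitative small-mass expansion of $I$ is at hand, but its sharpness at order $\delta^{\frac4N+2}$ is precisely what forces the restriction $\delta<\delta_0$.
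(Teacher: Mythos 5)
Your overall strategy — profile decomposition of a maximizing sequence, nonlinear superposition via long-time perturbation, the Pythagorean expansion giving $I(\delta)\le\sum_j I(\delta_j)$, and then ruling out dichotomy and vanishing by strict superadditivity of $I$ — is precisely the paper's approach (Proposition~\ref{P:large_data_maximizer}, proved via Proposition~\ref{P:super_add}, Lemma~\ref{L:cc_MV} and Corollary~\ref{C:nonlinearPD}). The gap is in your proof of superadditivity.

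You argue from the expansion $I(\eta)=C_S\eta^{\frac4N+2}+O(\eta^{\frac8N+2})$ alone, deducing
\begin{equation*}
\sum_j I(\delta_j)\le C_S\bigl(\max_j\delta_j\bigr)^{\frac4N}\sum_j\delta_j^2\,\bigl(1+O(\delta_0^{\frac4N})\bigr),
\qquad
I(\delta)\ge C_S\delta^{\frac4N+2}\bigl(1-O(\delta_0^{\frac4N})\bigr),
\end{equation*}
and claim a contradiction once $\delta_0$ is small. But the contradiction requires
$1-\theta^{\frac4N}\gtrsim\delta_0^{\frac4N}$ where $\theta=\max_j\delta_j/\delta$, and $\theta$ is determined by the profile decomposition of the maximizing sequence, not by you. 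If, say, only two profiles are nonzero with $\delta_1=\delta\sqrt{1-\epsilon^2}$, $\delta_2=\delta\epsilon$, then $1-\theta^{\frac4N}\approx\frac{2}{N}\epsilon^2$, and the first-order argument only gives $I(\delta)-I(\delta_1)-I(\delta_2)\gtrsim C_S\delta^{\frac4N}\delta_2^2-C\delta^{\frac8N+2}$, which is useless whenever $\delta_2\lesssim\delta^{\frac2N+1}$. Nothing in the profile decomposition forbids a profile with such a small but nonzero mass, so the superadditivity claim has a hole exactly in the regime where a secondary profile is tiny relative to the primary one.

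The paper closes this hole with a second ingredient you do not have: a lower bound on the growth rate of $I$ (Lemma~\ref{L:growthrate}),
$
I(\beta+\epsilon)\ge I(\beta)+c_1\beta^{\frac4N+1}\epsilon
$
with $c_1=\frac4N C_S$, obtained by comparing the solution with data $f$ to the one with data $(1+\epsilon/\delta)f$ via the long-time perturbation lemma. With $\epsilon=\sqrt{\alpha^2+\beta^2}-\beta\ge\alpha^2/(4\beta)$, this gives $I(\sqrt{\alpha^2+\beta^2})-I(\beta)\ge\frac{c_1}{4}\beta^{\frac4N}\alpha^2$, which \emph{does} dominate $I(\alpha)\le 2C_S\alpha^{\frac4N+2}$ when $\alpha\le C_3\beta^{\frac2N+1}$ (Step~2 of the paper's proof of Proposition~\ref{P:super_add}), while the regime $\alpha\ge C_3\beta^{\frac2N+1}$ is handled by your first-order argument (Step~1). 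You need both regimes; the growth-rate estimate is the missing piece, and it is not a routine consequence of the expansion of $I$ you already have — it is a separate perturbative comparison along the scaling direction.
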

After some preliminaries (\S \ref{SS:preliminaries}) we show in \S
\ref{SS:super_add} a crucial super-additivity property of
$I(\delta)$, which relies on rough estimates of $I(\delta)$ and its
growth rate. In \S \ref{SS:proof_existence} we use this property to
prove Proposition \ref{P:maximizer} by concentration-compactness
arguments.

\subsection{Profile decomposition}
\label{SS:preliminaries}

We recall here from \cite{MeVe98} a profile decomposition
adapted to the Strichartz estimate for the linear equation
\eqref{CPlin}. We start with a long time perturbation result for the equation
\eqref{CP}.

\begin{lemma}[Long time perturbation]
 \label{L:longperturb}
Let $A>0$. There exists $C=C(A)>0$ and a small $\delta_0=\delta_0(A)>0$
such that the following holds: Let $u\in C^0(\RR,L^2_x)$
and solves
$$
i\partial_t u + \frac 12\Delta u + \gamma |u|^{\frac 4N}u=0.
$$
Let $\tilde u = \tilde{u}(x,t) \in C^0(\RR,L^2_x)$ and define
$$
e = i \partial_t \tilde u + \frac 12\Delta \tilde u + \gamma |\tilde u|^{\frac 4N}
\tilde u.
$$
Assume
$
\|\tilde{u} \|_{L^{\frac 4N+2}_{t,x}}\leq A$, 
and for some $\eps < \delta_0$
\begin{equation*}
\|e\|_{L^{\frac{2(N+2)}{N+4}}_{t,x}} \leq \varepsilon \quad
\text{and}\quad \left\|e^{i\frac{(t-t_0)}{2}\Delta}(u(t_0)-\tilde
u(t_0))\right\|_{L^{\frac4N+2}_{t,x}} \leq \varepsilon,
\end{equation*}
then
$$
\|u - \tilde u \|_{L^{\frac4N+2}_{t,x}} \leq C \, \varepsilon.
$$
\end{lemma}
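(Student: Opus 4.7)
The plan is to set $w=u-\tilde u$, which satisfies
\begin{equation*}
i\partial_t w + \tfrac12\Delta w + \gamma\bigl(|\tilde u+w|^{\frac4N}(\tilde u+w)-|\tilde u|^{\frac4N}\tilde u\bigr)=-e,
\end{equation*}
and to control $\|w\|_{L^{\frac4N+2}_{t,x}}$ by combining the linear and inhomogeneous Strichartz estimates with a bootstrap on a finite partition of the time axis adapted to $\tilde u$. Write $q=\frac4N+2$ and note that its Hölder dual $q'=\frac{2(N+2)}{N+4}$ is exactly the exponent appearing in the inhomogeneous Strichartz inequality, and that Hölder's exponents match perfectly, $(q-1)q'=q$. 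Given a small parameter $\eta=\eta(A)>0$ to be fixed below, since $\|\tilde u\|_{L^q_{t,x}}\leq A$ one may, by absolute continuity of the integral, partition $\RR$ into $J=J(A,\eta)$ intervals $I_j=[t_j,t_{j+1})$ (with $t_0$ the initial time from the hypothesis) satisfying $\|\tilde u\|_{L^q(I_j\times\RR^N)}\leq \eta$.

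The first step is a one-interval estimate. From the pointwise bound $\bigl||\tilde u+w|^{\frac4N}(\tilde u+w)-|\tilde u|^{\frac4N}\tilde u\bigr|\lesssim |\tilde u|^{\frac4N}|w|+|w|^{1+\frac4N}$, Hölder's inequality, and Strichartz applied to the Duhamel formula on $I_j$ with initial time $t_j$, one obtains
\begin{equation*}
\|w\|_{L^q(I_j\times\RR^N)}\leq \gamma_j+C\eps+C\eta^{\frac4N}\|w\|_{L^q(I_j\times\RR^N)}+C\|w\|_{L^q(I_j\times\RR^N)}^{1+\frac4N},
\end{equation*}
where $\gamma_j:=\bigl\|e^{i(t-t_j)\Delta/2}w(t_j)\bigr\|_{L^q_{t,x}}$. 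Fixing $\eta$ small enough that $C\eta^{4/N}\leq \tfrac12$, and provided $\gamma_j+\eps$ is small enough, a standard continuity argument on $I_j$ yields $\|w\|_{L^q(I_j\times\RR^N)}\leq 4(\gamma_j+C\eps)$. To iterate, one propagates the linear error: from the Duhamel identity
\begin{equation*}
e^{i(t-t_{j+1})\Delta/2}w(t_{j+1})=e^{i(t-t_j)\Delta/2}w(t_j)+i\int_{t_j}^{t_{j+1}}e^{i(t-s)\Delta/2}\bigl(\gamma(|u|^{\frac4N}u-|\tilde u|^{\frac4N}\tilde u)+e\bigr)(s)\,ds,
\end{equation*}
applying the $L^q_{t,x}$ norm, the inhomogeneous Strichartz estimate, and the one-interval bound, one gets a recursion $\gamma_{j+1}+\eps\leq (1+C)(\gamma_j+\eps)$, whence $\gamma_j+\eps\leq (1+C)^J\eps=C(A)\eps$ for all $j\leq J$.

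The main obstacle is the bookkeeping of this induction: one must choose $\eta=\eta(A)$, then the contraction constants, and finally $\delta_0=\delta_0(A)$ in the correct order so that the smallness condition $\gamma_j+\eps\ll 1$ needed for the one-interval continuity argument is preserved along the recursion through all of the (finitely many, depending only on $A$) intervals. Once this is organized, summing the one-interval bounds $\|w\|_{L^q(I_j\times\RR^N)}\leq 4(\gamma_j+C\eps)$ over $j=0,\dots,J-1$ produces $\|w\|_{L^q_{t,x}}^q\leq \sum_j\bigl(4(\gamma_j+C\eps)\bigr)^q\leq C(A)\eps^q$, which is the claimed inequality. All remaining steps reduce to direct applications of (linear and inhomogeneous) Strichartz and Hölder.
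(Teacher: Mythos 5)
The paper does not prove this lemma: immediately after stating it the authors write ``We skip the proof of Lemma~\ref{L:longperturb}'' and refer to the energy-critical literature and to Tao--Visan--Zhang~\cite{TaViZh08} for the mass-critical statement. So there is no in-paper proof to compare against. Your argument is the standard long-time perturbation proof from that literature, and it is correct: you use the pointwise nonlinear estimate, the Hölder exponent identities $(q-1)q'=q$ and $\tfrac{1}{q'}=\tfrac{4/N}{q}+\tfrac1q$ for $q=\tfrac4N+2$, the partition of time into $J(A,\eta)$ intervals where $\|\tilde u\|_{L^q}\le\eta$, the one-interval bootstrap giving $\|w\|_{L^q(I_j)}\le 4(\gamma_j+C\eps)$, and the recursion $\gamma_{j+1}+\eps\le(1+C)(\gamma_j+\eps)$ propagating the free-evolution error. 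You also correctly flag the one genuinely delicate point: $\delta_0=\delta_0(A)$ must be chosen \emph{after} $\eta(A)$, $J(A)$ and the Strichartz constants, so that $(1+C)^J\delta_0$ stays below the smallness threshold required by the continuity argument on every interval. Two cosmetic remarks. First, since the time variable ranges over all of $\RR$ and the reference time $t_0$ in the hypothesis is interior, the partition and the iteration must be run both forward from $t_0$ and backward from $t_0$; this is immediate by time reversal but should be said. Second, in the one-interval estimate one should note that $\gamma_j=\|e^{i(t-t_j)\Delta/2}w(t_j)\|_{L^q_{t,x}(\RR\times\RR^N)}$ controls the restricted norm $\|e^{i(t-t_j)\Delta/2}w(t_j)\|_{L^q(I_j\times\RR^N)}$, which is what actually appears after Duhamel and Strichartz on $I_j$; using the global norm for $\gamma_j$, as you do, is exactly what makes the Duhamel-based propagation step work.
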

We skip the proof of Lemma \ref{L:longperturb}. We refer to \cite{Bo99JA}, \cite{TaVi05}, \cite{CoKeStTaTa06}, \cite{KeMe06} for similar result for the energy-critical case, \cite{HoRo08} for a subcritical case and \cite[Lemma 3.1]{TaViZh08} for a statement close to Lemma \ref{L:longperturb} in the mass-critical case.

We next turn to the profile decomposition.
If $\Gamma_0=\left\{\rho_0,t_0,\xi_0,x_0\right\}\in
(0,+\infty)\times \RR\times \RR^N\times \RR^N$, and $u$ is a
function of space and time, we will denote by $\Gamma_0(u)$ the
function
\begin{equation}
 \label{def_Gamma}
\Gamma_0(u)=\rho_0^{\frac{N}{2}} e^{ix \cdot \xi_0}
e^{-i\frac{t}{2} \left|\xi_0\right|^2} u \left(\rho_0^2 t + t_0,
\rho_0 \left(x-\frac{t}{2}\xi_0\right)+x_0\right).
\end{equation}
As we have seen in the introduction, if $u$ is a solution to the
linear equation \eqref{CPlin} (respectively, to the nonlinear
equation \eqref{CP}), then $\Gamma_0(u)$ is also a solution to
\eqref{CPlin} (respectively, to \eqref{CP}). We say that two
sequences of transformations $\left\{\Gamma_n^1\right\}_n$ and
$\left\{\Gamma_n^2\right\}_n$ are orthogonal when
\begin{equation}
 \label{def_ortho}
\lim_{n\to \infty} \frac{\rho_n^1}{\rho_n^2} +
\frac{\rho_n^2}{\rho_n^1} + \frac{\left|\xi_n^1 -
\xi_n^2\right|}{\rho_n^1} + \left|t_n^1-t_n^2\right| +
\left|\frac{t_n^1}{2}\, \frac{\xi_n^1-\xi_n^2}{\rho_n^1}
+x_n^1-x_n^2\right|=+\infty.
\end{equation}
We recall from \cite[Theorem 2]{MeVe98} (see \cite{Keraani06} in space dimension $1$, \cite{BeVa07} for general space dimension) , the following
profile decomposition result:
\begin{lemma}
\label{L:cc_MV}
Let $\{f_n\}$ be a bounded sequence in $L^2(\RR^N)$. Then
there exists a subsequence of $\{f_{n}\}$ (still denoted by
$\{f_{n}\}$), a family $\{U^j\}_{j\geq 1}$ of solutions to
\eqref{CPlin}, and sequences of parameters $\{\Gamma_n^j\}_n$, such
that if $j\neq k$, $\left\{\Gamma_n^j\right\}_n$ is orthogonal to
$\left\{\Gamma_n^k\right\}_n$ and for all $J$,
\begin{equation}
 \label{profile}
f_{n}(x)=\sum_{j=1}^J \Gamma_n^j\left(U^j\right)(0,x)+h_{n}^J(x) ,
\end{equation}
where
$$
\lim_{J\to +\infty} \limsup_{n\to \infty} \left\| e^{i\frac{t}{2}
\Delta} h_{n}^J \right\|_{L^{\frac4N +2}_{t,x}}=0.
$$
\end{lemma}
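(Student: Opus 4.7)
The plan is to extract profiles iteratively via an \emph{inverse Strichartz inequality} and then to use an asymptotic Pythagorean identity in $L^2$ to make the procedure terminate with the desired decay of the remainder. The core analytic input is the following: if $\{g_n\}$ is bounded in $L^2(\RR^N)$ and $\limsup_n \|e^{i\frac{t}{2}\Delta}g_n\|_{L^{4/N+2}_{t,x}}\geq \eta>0$, then, after extraction, there exist parameters $\Gamma_n=\{\rho_n,t_n,\xi_n,x_n\}$ and a nonzero $V\in L^2$ with $\|V\|_{L^2}\geq c(\eta,\sup_n\|g_n\|_{L^2})>0$ such that $(\Gamma_n^{-1}g_n)(0,\cdot)\rightharpoonup V$ weakly in $L^2$. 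I would prove this by upgrading \eqref{strichartz} to a \emph{refined} Strichartz inequality whose right-hand side involves only a Besov-type norm recording frequency localization on dyadic cubes (following Bourgain and Moyua--Vargas--Vega); locating a frequency cube and a space-time point where $|e^{i\frac{t}{2}\Delta}g_n|$ is large relative to its average then selects the scale $\rho_n$, the frequency shift $\xi_n$, and the translations $(t_n,x_n)$, while $L^2$-boundedness of $\Gamma_n^{-1}g_n$ produces $V$.

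Given this tool, the iteration runs as follows: set $h_n^0=f_n$, and at step $J\geq 1$ apply the inverse Strichartz estimate to $h_n^{J-1}$ whenever $\eta_J:=\limsup_n \|e^{i\frac{t}{2}\Delta}h_n^{J-1}\|_{L^{4/N+2}_{t,x}}>0$, producing $U^J$ and $\Gamma_n^J$; then set $h_n^J=h_n^{J-1}-\Gamma_n^J(U^J)(0,\cdot)$. The weak convergence in the extraction step gives the one-step identity $\|h_n^{J-1}\|_{L^2}^2=\|U^J(0)\|_{L^2}^2+\|h_n^J\|_{L^2}^2+o_n(1)$, which telescopes to
$$
\|f_n\|_{L^2}^2=\sum_{j=1}^J \|U^j(0)\|_{L^2}^2+\|h_n^J\|_{L^2}^2+o_n(1).
$$
Asymptotic orthogonality \eqref{def_ortho} of $\{\Gamma_n^j\}_n$ and $\{\Gamma_n^k\}_n$ for $j\neq k$ is obtained by contradiction: failure of \eqref{def_ortho} along a subsequence would allow a composition of the $\Gamma_n$'s to converge and would inject a nonzero contribution of $U^j$ into the weak limit used to extract $U^k$. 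The Pythagorean identity gives $\sum_j\|U^j\|_{L^2}^2\leq\limsup_n\|f_n\|_{L^2}^2<\infty$, so $\|U^J\|_{L^2}\to 0$; since the extraction forces $\|U^J\|_{L^2}\geq c(\eta_J)$, this forces $\eta_J\to 0$, which is precisely the conclusion of the lemma.

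The only serious obstacle is the inverse Strichartz inequality; the rest is formal bookkeeping. That step uses genuine harmonic analysis --- a refined Strichartz estimate via bilinear extension/restriction or a Littlewood--Paley decomposition --- and is where the full symmetry group \eqref{invariances}, most notably the Galilean shift $\xi_n$, enters naturally. Orthogonality in the sense of \eqref{def_ortho} is dictated by exactly the parameters allowed in this extraction, which is why the specific quantity appearing in \eqref{def_ortho} is the correct notion of asymptotic separation.
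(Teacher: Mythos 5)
The paper does not prove this lemma; it is quoted from \cite{MeVe98} (with \cite{Keraani06} and \cite{BeVa07} as references for dimensions $1$ and general $N$). Your proposal sketches what is essentially the proof found in those references: an inverse Strichartz / concentration estimate built from the Bourgain and Moyua--Vargas--Vega refined Strichartz inequalities, iterative weak-limit extraction, the asymptotic Pythagorean expansion of the $L^2$ norm, and the small-remainder conclusion via $\|U^J\|_{L^2}\ge c\,\eta_J^a/M^b$ combined with $\sum_J\|U^J\|_{L^2}^2<\infty$. So you are taking the same route as the sources the paper cites, and the bookkeeping (one-step Pythagoras, telescoping, passage from $\|U^J\|\to 0$ to $\eta_J\to 0$) is correct.

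Two points deserve more care than your one-line descriptions give them, though neither is a gap so much as compression. First, the quantitative inverse-Strichartz bound $\|V\|_{L^2}\geq c(\eta,M)$ must be made uniform in $J$; this holds because the $L^2$ norms $\|h_n^J\|_{L^2}$ are nonincreasing in $J$ by the Pythagorean identity, so the same $M$ works at every stage --- worth stating explicitly. Second, the orthogonality argument for $j<k$ is not purely ``a composition converges and injects $U^j$''; the standard argument uses that $(\Gamma_n^j)^{-1}(h_n^j)(0)\rightharpoonup 0$ by construction, that strong convergence of the unitary conjugation $(\Gamma_n^k)^{-1}\Gamma_n^j$ preserves weak-to-zero convergence, and that orthogonality with the intermediate $\Gamma_n^l$ ($j<l<k$) kills the corresponding terms, whence $U^k(0)=0$ if \eqref{def_ortho} fails --- a contradiction with the non-degeneracy of the extraction. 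Your sentence captures the spirit but would need to be unpacked into this induction on pairs. With those expansions, the proof is complete and matches the literature.
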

\begin{remark}
 \label{R:pythagore}
As a consequence of the orthogonality of the transformations
$\Gamma_n^j$, the following Pythagorean expansions hold for all
$J\geq 1$:
\begin{align}
 \label{pythagore_L2}
\left\|f_{n}\right\|^2_{L^2}-\sum_{j=1}^J \left\|U^j(0)\right\|_{L^2}^2-\left\| h_{n}^J\right\|^2_{L^2}&\underset{n\to +\infty}{\longrightarrow}0,\\
\label{pythagore_L4}
\left\|e^{i\frac{t}{2}\Delta}f_{n}\right\|_{L^{\frac{4}{N}+2}_{t,x}}^{\frac{4}{N}+2}
-\sum_{j=1}^J \left\|U^j\right\|_{L^{\frac4N +2}_{t,x}}^{\frac4N
+2} - \left\|e^{i\frac{t}{2}\Delta} h_{n}^J \right\|_{L^{\frac4N
+2}_{t,x}}^{\frac4N +2}&\underset{n\to +\infty}{\longrightarrow}0.
\end{align}
\end{remark}

Let $\{f_n\}_n$ be a sequence in $L^2$ and assume that the corresponding solution to \eqref{CP} is globally defined and satisfies $\|f_{n}\|_{L^{\frac 4N+2}_{t,x}} <\infty$. Consider the profile
decomposition given by Lemma \ref{L:cc_MV}. Let $V^j$ be the
nonlinear profile associated to $\{U^j,t_n^j\}_n$, that is the
unique solution of \eqref{CP} such that
$$
\lim_{n\to \infty} \left\|U^{j} \left(t_n^j\right)- V^j \left(t_n^j
\right) \right\|_{L^2}=0.
$$
Assume also that the $V^j$'s are globally defined and such that $\|V^j\|_{L^{2+\frac{4}{N}}}$ is finite for all $j$.
Combining 
Lemmas \ref{L:longperturb} and \ref{L:cc_MV}, one gets a nonlinear version of the decomposition \eqref{profile}:
\begin{corol}
 \label{C:nonlinearPD}
Let $\{f_n\}_n$ is as above and $\{u_n\}_n$ be the sequence of
solutions to \eqref{CP} with initial conditions $\{f_n\}_n$. Then
\begin{equation}
 \label{NLprofile}
u_n(t,x)
= \sum_{j=1}^J \Gamma_n^j\left(V^j\right)(t,x) + h_n^J(t,x)+r_n^J(t,x)
\end{equation}
with
$$
\lim_{J\to +\infty} \lim_{n\to +\infty} \left(\left\| r_{n}^J \right\|_{L^{\frac4N +
2}_{t,x}} +\sup_{t\in \RR} \left\|
r_n^J(t) \right\|_{L^2}\right)=0.
$$
\end{corol}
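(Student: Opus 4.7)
The plan is to apply the long time perturbation result (Lemma \ref{L:longperturb}) with exact solution $u_n$ and the approximate solution
$$\tilde u_n^J(t,x) := \sum_{j=1}^J \Gamma_n^j(V^j)(t,x) + e^{i\frac{t}{2}\Delta} h_n^J(x).$$
First I would verify that $\tilde u_n^J(0)$ is close to $f_n$ in $L^2$: by definition of the nonlinear profile, $\|V^j(t_n^j)-U^j(t_n^j)\|_{L^2}\to 0$, and since $\Gamma_n^j$ is an $L^2$-isometry, the difference $\tilde u_n^J(0)-f_n$ is small in $L^2$ for fixed $J$ and $n$ large. Hence the linear evolution $e^{i\frac{(t)}{2}\Delta}(\tilde u_n^J(0)-f_n)$ is small in $L^{\frac4N+2}_{t,x}$ by the Strichartz estimate \eqref{strichartz}.

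Next I would bound the error
$$e_n^J = i\partial_t \tilde u_n^J + \tfrac12\Delta \tilde u_n^J + \gamma|\tilde u_n^J|^{\frac4N}\tilde u_n^J$$
in $L^{\frac{2(N+2)}{N+4}}_{t,x}$. Since each $\Gamma_n^j(V^j)$ solves \eqref{CP} and $e^{i\frac t2\Delta}h_n^J$ solves the linear equation, $e_n^J$ reduces to the purely nonlinear mismatch
$$e_n^J = \gamma\Bigl(|\tilde u_n^J|^{\frac4N}\tilde u_n^J - \sum_{j=1}^J |\Gamma_n^j(V^j)|^{\frac4N}\Gamma_n^j(V^j)\Bigr).$$
Expanding algebraically, this mismatch consists of cross-terms involving products $\Gamma_n^j(V^j)\cdot\Gamma_n^k(V^k)$ with $j\neq k$, plus terms containing at least one factor of $e^{i\frac t2\Delta}h_n^J$. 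By an approximation argument, one may first replace each $V^j$ by a compactly supported smooth function (in $t$ and $x$); after a change of variables, the orthogonality relation \eqref{def_ortho} then forces the supports of $\Gamma_n^j(V^j)$ and $\Gamma_n^k(V^k)$ to separate as $n\to\infty$, making each cross-term negligible. The terms containing $e^{i\frac t2\Delta}h_n^J$ are controlled by its smallness in $L^{\frac4N+2}_{t,x}$ combined with H\"older in mixed Lebesgue spaces.

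To apply Lemma \ref{L:longperturb} I also need a uniform bound $\|\tilde u_n^J\|_{L^{\frac4N+2}_{t,x}}\le A$ independent of $n$ and $J$. This is where the Pythagorean expansions \eqref{pythagore_L2}--\eqref{pythagore_L4} of Remark \ref{R:pythagore} are crucial: summability of $\|U^j(0)\|_{L^2}^2$ implies $\|U^j(0)\|_{L^2}\to 0$ for $j$ large, so by small data scattering $V^j$ is close to $U^j$ in Strichartz norm for $j\ge j_0$, yielding control of the tail. Applying Lemma \ref{L:longperturb} gives $\|u_n-\tilde u_n^J\|_{L^{\frac4N+2}_{t,x}}\to 0$ in the desired iterated limit, which is the first half of the conclusion.

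For the uniform $L^2$-in-time bound on $r_n^J$, I would write the Duhamel formula for $r_n^J = u_n-\tilde u_n^J$ and estimate the nonlinear source term $|u_n|^{\frac4N}u_n-|\tilde u_n^J|^{\frac4N}\tilde u_n^J - \sum_j (\ldots) - (\ldots)$ by the dual Strichartz estimate; the smallness in $L^{\frac4N+2}_{t,x}$ just established, together with the uniform Strichartz bounds on $u_n$ and $\tilde u_n^J$, yields $\sup_t\|r_n^J(t)\|_{L^2}\to 0$. The main technical obstacle is the vanishing of the cross nonlinear terms in $L^{\frac{2(N+2)}{N+4}}_{t,x}$, which requires the truncation/change-of-variables step using \eqref{def_ortho} in full, accounting simultaneously for asymptotic separation in scale, translation, time, and Galilean frequency.
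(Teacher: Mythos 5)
Your proposal is correct and follows exactly the approach the paper intends when it states (without detailed proof) that the corollary follows by ``combining Lemmas \ref{L:longperturb} and \ref{L:cc_MV}'': you set up the approximate solution $\tilde u_n^J = \sum_{j\le J}\Gamma_n^j(V^j) + e^{i\frac t2\Delta}h_n^J$, verify the three hypotheses of the long-time perturbation lemma (closeness of initial data, smallness of the forcing error via asymptotic orthogonality, and a uniform Strichartz bound from the Pythagorean expansion together with small-data scattering for the high-$j$ profiles), and then close the $L^\infty_t L^2$ estimate on $r_n^J$ by a final Duhamel/Strichartz argument. This is the standard implementation of the nonlinear profile decomposition and matches what the paper's one-line justification is pointing to.
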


\begin{remark}
 \label{R:NL_pythagore}
Using the orthogonality of the sequences of transformations $\{\Gamma_n^j\}_n$, it is easy to check that 
\begin{align}
 \label{NLS_L4}
\lim_{J\to \infty}\limsup_{n\to \infty}\left|
\left\| u_{n} \right\|_{L^{\frac{4}{N}+2}_{t,x}}^{\frac{4}{N}+2}
-\sum_{j=1}^J\left\|V^j\right\|_{L^{\frac4N +2}_{t,x}}^{\frac4N +2}\right|=0.
\end{align}
\end{remark}

\subsection{A superadditivity property of the maximum}
\label{SS:super_add}

In this paragraph we give various estimates on $I(\delta)$. The main
result is the following proposition, which is one of the steps
(along with a concentration-compactness argument) in showing that the
maximizer is attained:
\begin{prop}
\label{P:super_add}
 There exists $\delta_0>0$ such that if $0<\sqrt{\alpha^2+\beta^2}<\delta_0$, then
\begin{equation*}
I(\alpha)+I(\beta)<I\left(\sqrt{\alpha^2+\beta^2}\right).
\end{equation*}
\end{prop}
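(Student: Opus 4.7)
The plan is to bracket $I(\delta)$ tightly around its linear leading part $C_S\delta^p$ (with $p = 2 + \tfrac{4}{N}$) and then exploit the strict super-additivity of the function $\delta \mapsto C_S\delta^p$ on $L^2$ masses.

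First I would establish the rough two-sided estimate
\[
\bigl| I(\delta) - C_S\,\delta^p \bigr| \;\le\; K\,\delta^{2p-2}
\]
for all sufficiently small $\delta > 0$. For the upper bound, given $f$ with $\|f\|_{L^2} = \delta$, set $v = e^{it\Delta/2}f$ and $w = u - v$ where $u$ is the NLS solution. A small-data Strichartz/Duhamel fixed point (a direct application of Lemma~\ref{L:longperturb} with $\tilde u = v$) gives $\|w\|_{L^p_{t,x}} \lesssim \delta^{p-1}$; Taylor-expanding $|v+w|^p$ around $v$ then yields
\[
\bigl|\,\|u\|_{L^p}^p - \|v\|_{L^p}^p\,\bigr| \;\le\; C\bigl(\|v\|_{L^p}^{p-1}\|w\|_{L^p} + \|w\|_{L^p}^p\bigr) \;\lesssim\; \delta^{2p-2}.
\]
Combined with Foschi's bound $\|v\|_{L^p}^p \le C_S\delta^p$ and taking the supremum over $f$, this gives the upper estimate. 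The matching lower bound follows by running the same expansion on the explicit competitor $f = \delta G_0$, for which $\|v\|_{L^p}^p = C_S\delta^p$ exactly.

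Next, for $p > 2$ and $\alpha \ge \beta \ge 0$ with $\mu = \sqrt{\alpha^2 + \beta^2}$, Bernoulli applied to $(1 + \beta^2/\alpha^2)^{p/2}$ yields
\[
\mu^p - \alpha^p - \beta^p \;\ge\; \tfrac{p}{2}\,\alpha^{p-2}\beta^2 - \beta^p \;\ge\; \bigl(\tfrac{p}{2} - 1\bigr)\,\alpha^{p-2}\beta^2,
\]
and combining with the bracket gives
\[
I(\mu) - I(\alpha) - I(\beta) \;\ge\; C_S\bigl(\tfrac{p}{2} - 1\bigr)\alpha^{p-2}\beta^2 \;-\; 3K\mu^{2p-2}.
\]
In the comparable regime $\beta \ge c_1\alpha$, the leading term is of order $\mu^p$ and dominates the $O(\mu^{2p-2})$ error once $\mu < \delta_0$ with $\delta_0$ small, since $p > 2$.

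The main obstacle is the highly unbalanced regime $\beta \ll \alpha$, in which the algebraic gain $\alpha^{p-2}\beta^2$ is dominated by the generic correction $\mu^{2p-2} \sim \alpha^{2p-2}$, so the bracket alone does not close. To handle this case I would complement the argument by a Pythagorean-type construction: translating near-maximizers of $I(\alpha)$ and $I(\beta)$ far apart in space (or into disjoint frequency packets) and applying Lemma~\ref{L:longperturb} (together with Remark~\ref{R:NL_pythagore} and Corollary~\ref{C:nonlinearPD}) produces, for free, the non-strict super-additivity $I(\mu) \ge I(\alpha) + I(\beta)$. The strict improvement is then extracted by comparing with the Gaussian competitor $\mu G_0$, whose linear Strichartz norm $C_S\mu^p$ strictly exceeds $C_S(\alpha^p + \beta^p)$ by Foschi's rigidity; the key technical point is that this linear excess survives the $O(\delta^{2p-2})$ nonlinear corrections once the two estimates (disjoint-sum lower bound and Gaussian lower bound) are combined, yielding the strict inequality uniformly in $\beta/\alpha$. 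Ensuring this uniformity is the delicate part of the argument.
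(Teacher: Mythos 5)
Your bracket $\lvert I(\delta)-C_S\delta^p\rvert\leq K\delta^{2p-2}$ matches Lemma~\ref{L:first_est}, and your treatment of the comparable regime $\beta\gtrsim\alpha$ (Bernoulli, then bracket) is essentially the paper's Step~1. You also correctly identify the real obstacle: in the unbalanced regime the algebraic gain $\alpha^{p-2}\beta^2$ is swamped by the $O(\mu^{2p-2})$ error, so the bracket alone cannot close.

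However, your proposed resolution of that regime does not work. The two ingredients you propose to combine are both lower bounds on $I(\mu)$ — the far-apart-translate construction gives $I(\mu)\geq I(\alpha)+I(\beta)$, and the Gaussian competitor gives $I(\mu)\geq C_S\mu^p - K\mu^{2p-2}$ — but taking the max of two lower bounds produces no strict excess over $I(\alpha)+I(\beta)$. The ``linear excess'' $C_S(\mu^p-\alpha^p-\beta^p)\sim\alpha^{p-2}\beta^2$ is precisely the quantity that is lost in the $O(\alpha^{2p-2})$ noise when $\beta\ll\alpha^{p/2}$, so the Gaussian lower bound cannot beat the rough upper bound $I(\alpha)+I(\beta)\leq C_S\alpha^p+C_S\beta^p+2K\alpha^{2p-2}$ in this range. (Also, ``Foschi's rigidity'' plays no role: $\mu^p>\alpha^p+\beta^p$ is just strict convexity of $t\mapsto t^{p/2}$.) You are right that the uniformity is the delicate part, but you leave exactly that part unsupplied.

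What the paper actually uses here is a quantitative \emph{monotonicity} estimate (Lemma~\ref{L:growthrate}): for small $\delta$ and $\eps\leq\delta/2$,
\[
I(\delta+\eps)\;\geq\;I(\delta)+c_1\,\delta^{p-1}\eps,
\]
obtained by rescaling a near-maximizer for mass $\delta$ to mass $\delta+\eps$ and controlling the resulting change of the $L^p_{t,x}$ norm via Lemma~\ref{L:longperturb}. This is a genuinely new ingredient: it lower-bounds the marginal gain from adding a tiny amount of mass to the \emph{larger} profile, without ever having to place the smaller piece in competition. Applied with $\delta$ the larger mass and $\eps=\mu-\delta\gtrsim(\text{small mass})^2/\delta$, the gain is $\gtrsim\delta^{p-2}(\text{small mass})^2$, which dominates $I(\text{small mass})\lesssim(\text{small mass})^p$ precisely because the unbalanced regime has the small mass much smaller than the large one. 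That is the missing closing argument.
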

\begin{remark}
Superadditivity (or subadditivity for minimizers) conditions
are classical in this context (see \cite[Subsection I.2]{Lions84a}).
\end{remark}

The proof of Proposition \ref{P:super_add} relies on two estimates
on $I(\delta)$ that we treat in Lemmas \ref{L:first_est} and \ref{L:growthrate} below.
\begin{lemma}
 \label{L:first_est}
There exists a constant $C_0>0$ such that for small $\delta>0$,
\begin{equation}
  \label{E:1st}
\left|I(\delta)-C_S\delta^{\frac4N+2} \right| \leq C_0\delta^{\frac{8}{N}+2},
\end{equation}
where $C_S$ is the best constant for the Strichartz inequality 
\begin{equation}
\label{strichartz2}
\iint \left|e^{i\frac{t}{2}\Delta}f\right|^{\frac{4}{N}+2}dt\,dx\leq C_S \|f\|_{L^{2}}^{\frac 4N+2}. 
\end{equation} 
\end{lemma}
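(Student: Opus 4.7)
Set $p = \frac{4}{N}+2$, so that $2(p-1) = \frac{8}{N}+2$ is exactly the claimed error exponent. For $\|f\|_{L^2}=\delta$ let $u$ solve \eqref{CP} and set $u_L := e^{i t \Delta/2} f$; by definition of the best constant in \eqref{strichartz2},
$\sup_{\|f\|_{L^2}=\delta} \|u_L\|_{L^p_{t,x}}^p = C_S\,\delta^p$.
The strategy is to show that passing from $u_L$ to $u$ shifts $\|\cdot\|_{L^p_{t,x}}^p$ by at most $O(\delta^{2(p-1)})$ uniformly in $f$, and then take the supremum.

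The first ingredient is small-data global wellposedness: a Picard iteration on the Duhamel formula using \eqref{strichartz} and its dual form (cf.\ \cite{CaWe90}) provides $\delta_0>0$ such that for every $f$ with $\|f\|_{L^2}=\delta<\delta_0$ the solution $u$ is global and satisfies $\|u\|_{L^p_{t,x}} \leq 2 C_S^{1/p}\delta$. Writing $w:=u-u_L$, Duhamel gives $w(t) = i\gamma\int_0^t e^{i(t-s)\Delta/2}\bigl(|u|^{4/N}u\bigr)(s)\,ds$, and the dual Strichartz estimate combined with the exponent identity $(p-1)\cdot\frac{2(N+2)}{N+4}=p$ yields
\begin{equation*}
\|w\|_{L^p_{t,x}} \leq C \bigl\||u|^{4/N}u\bigr\|_{L^{2(N+2)/(N+4)}_{t,x}} = C \|u\|_{L^p_{t,x}}^{p-1} \leq C'\delta^{p-1}.
\end{equation*}

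The second ingredient is the pointwise inequality $\bigl||a+b|^p - |a|^p\bigr| \leq C_p\bigl(|a|^{p-1}|b|+|b|^p\bigr)$; applied with $a=u_L$, $b=w$, and combined with H\"older, it gives
\begin{equation*}
\Bigl|\|u\|_{L^p_{t,x}}^p - \|u_L\|_{L^p_{t,x}}^p\Bigr| \leq C_p\bigl(\|u_L\|_{L^p_{t,x}}^{p-1}\|w\|_{L^p_{t,x}} + \|w\|_{L^p_{t,x}}^p\bigr) \leq C_0\,\delta^{2(p-1)},
\end{equation*}
using $\|u_L\|_{L^p_{t,x}}\leq C_S^{1/p}\delta$ and $p(p-1)\geq 2(p-1)$ (so that the $\|w\|_{L^p_{t,x}}^p$ term is absorbed for small $\delta$). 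Taking the supremum over $\|f\|_{L^2}=\delta$ in the upper bound $\|u\|_{L^p_{t,x}}^p \leq \|u_L\|_{L^p_{t,x}}^p + C_0\delta^{2(p-1)}$ yields $I(\delta) \leq C_S\delta^p + C_0\delta^{2(p-1)}$. For the reverse direction, choose a maximizing sequence $\{f_n\}$ for the linear problem with $\|f_n\|_{L^2}=\delta$ and $\|e^{it\Delta/2}f_n\|_{L^p_{t,x}}^p \to C_S\delta^p$; applying the same inequality to the corresponding NLS solutions $u_n$ gives $I(\delta) \geq \|u_n\|_{L^p_{t,x}}^p \geq \|e^{it\Delta/2}f_n\|_{L^p_{t,x}}^p - C_0\delta^{2(p-1)}$, and letting $n\to\infty$ concludes.

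No step is a genuine obstacle: the argument is a first-order comparison between the nonlinear and linear flows, with the Strichartz/H\"older bookkeeping dictating the exact exponent $\frac{8}{N}+2$. Uniformity of the small-data bootstrap over the sphere $\|f\|_{L^2}=\delta$ is automatic since all Strichartz constants depend only on the $L^2$ norm of the data.
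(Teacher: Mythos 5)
Your argument is correct and follows essentially the same route as the paper: both rely on the small-data Duhamel bound $\|u - e^{i\frac{t}{2}\Delta}f\|_{L^{\frac4N+2}_{t,x}} \lesssim \delta^{\frac4N+1}$ (Claim~\ref{C:first_est}) and then compare the $L^p_{t,x}$ norms to the power $p$, the only cosmetic difference being that you apply the pointwise inequality $\bigl||a+b|^p - |a|^p\bigr| \lesssim |a|^{p-1}|b| + |b|^p$ followed by H\"older, whereas the paper applies the scalar inequality $|A^p - B^p| \lesssim |A-B|(A^{p-1}+B^{p-1})$ to $A=\|e^{i\frac{t}{2}\Delta}f\|_{L^p_{t,x}}$, $B=\|u\|_{L^p_{t,x}}$ and then the triangle inequality in $L^p_{t,x}$.
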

Before proving this lemma, we start by a straightforward consequence of the small data
well-posedness theory for equation \eqref{CP} (see \cite{CaWe90}).
\begin{claim}
\label{C:first_est}
There exists a constant $C>0$ such that if $\|f\|_{L^2}$ is small, then
\begin{equation*}
 \left\|e^{i\frac{t}{2}\Delta}f - u\right\|_{L^{\frac4N +2}_{t,x}} \leq C\|f\|^{\frac{4}{N}+1}_{L^2},
\end{equation*}
where $u$ is the solution of \eqref{CP} with initial condition $f$.
\end{claim}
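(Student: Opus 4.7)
The plan is to use the Duhamel representation together with the inhomogeneous (retarded) Strichartz estimate, and then feed in the bound on $\|u\|_{L^{4/N+2}_{t,x}}$ coming from the small data theory.

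First I would write Duhamel's formula for the solution $u$ of \eqref{CP}:
$$
u(t) - e^{i\frac{t}{2}\Delta}f \;=\; i\gamma \int_0^t e^{i\frac{t-s}{2}\Delta}\bigl(|u|^{\frac{4}{N}}u\bigr)(s)\,ds.
$$
The exponent $\frac{2(N+2)}{N+4}$ appearing in Lemma \ref{L:longperturb} is the H\"older dual of $\frac{4}{N}+2$ divided by $\frac{4}{N}+1$, so
$$
\bigl\| |u|^{\frac{4}{N}} u \bigr\|_{L^{\frac{2(N+2)}{N+4}}_{t,x}} \;=\; \|u\|_{L^{\frac{4}{N}+2}_{t,x}}^{\frac{4}{N}+1}.
$$
The inhomogeneous Strichartz estimate (dual to \eqref{strichartz} composed with the usual TT* argument) then gives
$$
\bigl\| u - e^{i\frac{t}{2}\Delta}f \bigr\|_{L^{\frac{4}{N}+2}_{t,x}} \;\leq\; C\,\|u\|_{L^{\frac{4}{N}+2}_{t,x}}^{\frac{4}{N}+1}.
$$

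Next I would recall the small data well-posedness theory of Cazenave--Weissler \cite{CaWe90}: applying the fixed-point argument to the Duhamel map
$$
\Phi(v)(t) = e^{i\frac{t}{2}\Delta}f + i\gamma \int_0^t e^{i\frac{t-s}{2}\Delta}\bigl(|v|^{\frac{4}{N}}v\bigr)(s)\,ds
$$
in the ball of $L^{\frac{4}{N}+2}_{t,x}$ of radius $R \egaldef 2\,C_S^{\frac{1}{4/N+2}}\,\|f\|_{L^2}$, one finds, using \eqref{strichartz2} and the estimate above, that $\Phi$ is a contraction on this ball provided $\|f\|_{L^2}$ is small enough. In particular the unique fixed point, which is the solution $u$ of \eqref{CP}, satisfies
$$
\|u\|_{L^{\frac{4}{N}+2}_{t,x}} \;\leq\; 2\,C_S^{\frac{1}{4/N+2}}\,\|f\|_{L^2}.
$$

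Combining the two previous displays yields
$$
\bigl\| u - e^{i\frac{t}{2}\Delta}f \bigr\|_{L^{\frac{4}{N}+2}_{t,x}} \;\leq\; C\,\bigl(2\,C_S^{\frac{1}{4/N+2}}\bigr)^{\frac{4}{N}+1}\,\|f\|_{L^2}^{\frac{4}{N}+1},
$$
which is exactly the conclusion of the claim. I do not expect a real obstacle here: the only subtle point is to make sure one uses the inhomogeneous Strichartz estimate in its non-admissible (dual) form so that the $L^{\frac{4}{N}+2}_{t,x}$ norm of the Duhamel term is controlled by the $L^{\frac{2(N+2)}{N+4}}_{t,x}$ norm of the nonlinearity, and that the threshold $\|f\|_{L^2}\ll 1$ is chosen small enough that the contraction argument produces a solution whose global Strichartz norm is comparable to that of the free evolution.
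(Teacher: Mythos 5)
Your proof is correct and follows essentially the same route as the paper: Duhamel's formula, the inhomogeneous Strichartz estimate from $L^{\frac{2(N+2)}{N+4}}_{t,x}$ into $L^{\frac{4}{N}+2}_{t,x}$, and the small-data bound $\|u\|_{L^{\frac{4}{N}+2}_{t,x}}\lesssim\|f\|_{L^2}$ from the Cazenave--Weissler fixed-point argument. The only difference is that you spell out the contraction argument explicitly, whereas the paper simply cites the Cauchy theory for the bound $\|u\|_{L^{\frac{4}{N}+2}_{t,x}}\leq 2\|f\|_{L^2}$.
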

\begin{proof}[Sketch of proof]
The Cauchy problem theory for \eqref{CP} implies that for small initial data
$$
\|u\|_{L^{\frac4N +2}_{t,x}}\leq 2\|f\|_{L^2}.
$$
Since
$$
u(t) = e^{i\frac{t}{2} \Delta} f + i \gamma \int_0^{t}
e^{\frac{i}{2}(t-s) \Delta}|u(s)|^{\frac{4}{N}} u(s)ds,
$$
the claim follows from Theorem \ref{T:Foschi} and the
Strichartz estimate
$$
\left\|\int_{0}^t e^{\frac{i}{2}(t-s) \Delta}\varphi(s)ds
\right\|_{L^{\frac4N +2}_{t,x}} \leq C\|\varphi\|_{L^{\frac{2(N+2)}{N+4}}_{t,x}}.
$$
\end{proof}

\begin{proof}[Proof of Lemma \ref{L:first_est}]
Let $u$ be a solution of \eqref{CP} with initial condition $f$
such that $\|f\|_{L^2(\cR^N)}=\delta$. Then
\begin{multline*}
\left| \iint \left| e^{i\frac{t}{2} \Delta} f(x)\right|^{\frac4N+2}\,dt\,dx
- \iint \left| u(t,x) \right|^{\frac4N+2}\,dt\,dx \right| \\
\leq C \left| \,  \left\| e^{i\frac{t}{2}\Delta}f\right\|_{L^{\frac4N+2}_{t,x}}
- \left\| u \right\|_{L^{\frac4N+2}_{t,x}} \right|
\left(\left\|e^{i\frac{t}{2}\Delta} f\right\|_{L^{\frac4N+2}_{t,x}}^{\frac{4}{N}+1}
+\left\|u\right\|_{L^{\frac4N+2}_{t,x}}^{\frac{4}{N}+1} \right) \\
\leq C \, \left\| e^{i\frac{t}{2}\Delta} f- u \right\|_{L^{\frac4N+2}_{t,x}} \, \delta^{\frac{4}{N}+1}
\leq C \delta^{\frac{8}{N}+2},
\end{multline*}
where the last line follows from the triangle inequality and then
from Claim \ref{C:first_est}. Applying the previous inequality to
the initial data $f= \delta F_0$, where $F_0$ is the initial condition of a maximizer for Strichartz estimate \eqref{strichartz2}, and then to a sequence
$\left\{f_{n}\right\}_n$ such that $\|f_{n}\|_{L^2}=\delta$ and $\iint
|u_n|^{\frac{4}{N}+2}\to I(\delta)$, we  obtain \eqref{E:1st}.
\end{proof}

We next estimate the rate of growth of $I(\delta)$.
\begin{lemma}
 \label{L:growthrate}
If $\delta$ is small and $\eps \leq \frac12 \delta$, then
\begin{equation}
 \label{E:rate}
I(\delta) + c_1 \,\delta^{\frac4N +1} \eps\leq 
I(\delta+\eps) \leq I(\delta) + C_1 \, \delta^{\frac4N +1} \,\eps,
\end{equation}
where $c_1 = \frac4N \, C_S $ and $C_1 = 2\left(\frac4N+2\right) \, C_S$.
\end{lemma}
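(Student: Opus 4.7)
The plan is to prove both bounds in \eqref{E:rate} by comparing near-maximizers at masses $\delta$ and $\delta+\eps$ via scalar rescaling of initial data. Set $p:=\frac4N+2$ and $\lambda:=(\delta+\eps)/\delta\in[1,3/2]$. For the lower bound, fix a small $\eta>0$ and choose $f$ with $\|f\|_{L^2}=\delta$ whose NLS solution $u$ satisfies $\|u\|_{L^p_{t,x}}^p\geq I(\delta)-\eta$, then set $\tilde f:=\lambda f$ and let $\tilde u$ be the associated NLS solution. For the upper bound, reverse the roles: start from a near-maximizer $\tilde f$ of $I(\delta+\eps)$ and set $f:=\lambda^{-1}\tilde f$.

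The core comparison is the estimate
$$
\bigl|\|\tilde u\|_{L^p_{t,x}}^p-\lambda^p\|u\|_{L^p_{t,x}}^p\bigr|\leq C\delta^{p+\frac{4}{N}}.
$$
Its proof uses linearity of the free Schr\"odinger flow, $e^{i\frac t2\Delta}\tilde f=\lambda\,e^{i\frac t2\Delta}f$, together with Claim~\ref{C:first_est} applied to both $f$ and $\tilde f$; the triangle inequality then gives $\|\tilde u-\lambda u\|_{L^p_{t,x}}\leq C\delta^{p-1}$ (using $\delta+\eps\leq 3\delta/2$). Writing $\tilde u=\lambda u+w$ with $\|w\|_{L^p_{t,x}}\leq C\delta^{p-1}$, the elementary pointwise inequality $\bigl||a+b|^p-|a|^p\bigr|\leq Cp(|a|+|b|)^{p-1}|b|$ combined with H\"older's inequality in the space-time integral produces the claimed bound with error $C\delta^{2(p-1)}=C\delta^{p+\frac{4}{N}}$.

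Taking suprema in the respective direction (and letting $\eta\to 0$) yields
$$
\lambda^p I(\delta)-C\delta^{p+\frac{4}{N}}\leq I(\delta+\eps)\leq\lambda^p I(\delta)+C\delta^{p+\frac{4}{N}}.
$$
For the lower bound in \eqref{E:rate}, the convexity inequality $\lambda^p-1\geq p(\eps/\delta)$ combined with Lemma~\ref{L:first_est} extracts the leading contribution $pC_S\delta^{p-1}\eps$, and the gap between $pC_S$ and $c_1=\frac{4}{N}C_S=(p-2)C_S$ together with the smallness of $\delta$ absorbs the $O(\delta^{p+\frac{4}{N}})$ and $O(\delta^{p-1+\frac{4}{N}}\eps)$ corrections. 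For the upper bound, expanding $\lambda^p-1$ using the constraint $\eps/\delta\leq 1/2$ to dominate the higher-order binomial terms and a parallel absorption of errors yields the constant $C_1=2pC_S$.

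The main obstacle is the careful bookkeeping of the lower-order error terms against the main term $\delta^{p-1}\eps$: in particular, the $O(\delta^{p+\frac{4}{N}})$ nonlinear correction and the Lemma~\ref{L:first_est} remainder must be dominated by the margin between the sharp leading coefficient $pC_S$ coming from the scaling expansion and the stated constants $c_1,C_1$. This is precisely what forces the specific numerical values $\frac{4}{N}C_S$ and $2\bigl(\frac{4}{N}+2\bigr)C_S$ and requires $\delta_0$ to be chosen small enough so that the inequalities close uniformly over $\eps\in(0,\delta/2]$.
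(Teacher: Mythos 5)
Your overall strategy — rescale a near-maximizer, compare $\tilde u$ with $\lambda u$, expand $\lambda^p$ — is the same as the paper's, but your key comparison estimate is too weak, and this is a genuine gap, not a bookkeeping issue. You bound $\|\tilde u - \lambda u\|_{L^p_{t,x}}$ by triangle inequality through the free evolutions, applying Claim~\ref{C:first_est} to $f$ and to $\tilde f = \lambda f$ separately. This yields $\|\tilde u - \lambda u\|_{L^p_{t,x}} \leq C\delta^{p-1}$, a bound that is completely independent of $\eps$: it does not vanish as $\lambda \to 1$, even though $\tilde u - \lambda u$ itself is $0$ when $\eps=0$. Consequently your error term $C\delta^{p+4/N}$ in the comparison $\bigl|\|\tilde u\|_{L^p}^p - \lambda^p\|u\|_{L^p}^p\bigr| \leq C\delta^{p+4/N}$ has no factor of $\eps$, while the margin you need to absorb it is $2C_S\delta^{p-1}\eps$. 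The absorption
$$
C\,\delta^{p+4/N} \leq 2C_S\,\delta^{p-1}\,\eps
$$
requires $\eps \gtrsim \delta^{4/N+1}$, but the lemma must hold for \emph{all} $\eps \in (0,\tfrac12\delta]$, including $\eps$ much smaller than $\delta^{4/N+1}$. So the argument as written fails precisely in the regime of very small $\eps$, and no choice of $\delta_0$ can rescue it.

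The paper sidesteps this by not passing through the free evolution at all. It observes that $\lambda u$ (same initial data $\lambda f$ as $\tilde u$) solves NLS with nonlinearity coefficient $\lambda^{-4/N}$, so viewed as an approximate solution of the actual NLS its defect is $e = \gamma\bigl(1-\lambda^{-4/N}\bigr)|\lambda u|^{4/N}(\lambda u)$, whose norm carries a factor $(\lambda-1) \sim \eps/\delta$. Feeding this into the long-time perturbation Lemma~\ref{L:longperturb} gives $\|\tilde u - \lambda u\|_{L^p_{t,x}} \lesssim \eps\,\delta^{4/N}$, gaining exactly the factor $\eps/\delta$ your estimate is missing, and the subsequent error $\sim \eps\,\delta^{8/N+1}$ is then uniformly dominated by $\delta^{p-1}\eps$ once $\delta$ is small. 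Your proof would be repaired by replacing the triangle-inequality-via-linear-flow step with this perturbative comparison of the two nonlinear solutions; the rest of your expansion of $\lambda^p-1$ and the absorption of the $O(\delta^{p-1+4/N}\eps)$ correction is fine.
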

\begin{proof}
\noindent \emph{Step 1.}
We first show that there exist 
$C_2,\epsilon_0>0$ such that if $f\in L^2$ with
$\|f\|_{L^2}+\epsilon \leq \epsilon_0$, $u$ is the solution of
\eqref{CP} with the initial condition $f$, and $v_{\epsilon}$ is the
solution of \eqref{CP} with the initial condition $(1+\epsilon)f$,
then
$$
\left|(1+\epsilon)^{\frac4N+2} \iint \big|u\big|^{\frac4N+2}
- \iint \big|v_{\epsilon}\big|^{\frac4N+2} \right|
\leq C_2 \, \epsilon \, \|f\|_{L^2}^{\frac{8}{N}+2}.
$$
First, observe that $u_{\epsilon}=(1+\epsilon)u$ is a solution to
the equation
$$
i\partial_t u_{\epsilon}+\frac{1}{2}\Delta u_{\epsilon} +
\frac{1}{(1 + \epsilon)^{\frac 4N}} |u_{\epsilon}|^{\frac{4}{N}}
u_{\epsilon}=0, \quad u_{\epsilon \restriction t=0}=(1+\epsilon)f.
$$
We rewrite the above equation as
$$
i\partial_t u_{\epsilon}+\frac{1}{2}\Delta
u_{\epsilon}+|u_{\epsilon}|^{\frac
4N}u_{\epsilon}=\left(1-\frac{1}{(1+\epsilon)^{\frac
4N}}\right)|u_{\epsilon}|^{\frac 4N}u_{\epsilon},
$$
noting that for small $\epsilon$,
Strichartz estimate implies
\begin{align*}
\bigg\|\Big(1 - \frac{1}{(1 + \epsilon)^{\frac 4N}} \Big)|u_{\epsilon}|^{\frac 4N} u_{\epsilon} \bigg\|_{L^{\frac{2(N+2)}{N+4}}_{t,x}} & \leq
C\, \epsilon \Big\||u_{\epsilon}|^{1+\frac{4}{N}} \Big\|_{L^{\frac{2(N+2)}{N+4}}_{t,x}}\\
= C\,\epsilon \|u_{\epsilon} \|^{1+\frac{4}{N}}_{L^{\frac4N+2}_{t,x}}
& \leq C \epsilon \|f\|^{1+\frac{4}{N}}_{L^2}.
\end{align*}
Since $v_{\epsilon}$ is a solution of
$$
i\partial_t v_{\epsilon}+\frac 12\Delta v_{\epsilon}+|v_{\epsilon}|^{\frac 4N} v_{\epsilon}=0,
\quad v_{\epsilon\restriction t=0}=(1+\epsilon)f,
$$
by the long time perturbation Lemma \ref{L:longperturb}, we get
$$
\left\|u_{\epsilon} - v_{\epsilon}\right\|_{L^{\frac4N+2}_{t,x}}
\leq C \epsilon\|f\|^{\frac{4}{N}+1}_{L^2}.
$$
Hence,
\begin{align*}
\left|\iint \left|u_{\epsilon}\right|^{\frac4N+2}\,dt\,dx
- \iint \left|v_{\epsilon}\right|^{\frac4N+2} \,dt\,dx \right|
&\leq C \left\|u_{\epsilon}-v_{\epsilon}\right\|_{L^{\frac4N+2}_{t,x}} \,
\|f\|^{\frac{4}{N}+1}_{L^2}\\
&\leq C \epsilon \|f\|^{\frac{8}{N}+2}_{L^2},
\end{align*}
which concludes Step 1.

\smallskip

\noindent\emph{Step 2.}
Let $\eps$, $\delta >0$. First, we show the lower bound of $I(\delta+\eps)$.
Let $f \in L^2(\cR^N)$ be such that
\begin{equation}
\label{cond_u_delta} \|f\|_{L^2}=\delta \quad \text{and} \quad \iint
|u(t,x)|^{\frac4N +2} \,dt \,dx \geq I(\delta)-\delta^{\frac8N +1} \eps,
\end{equation}
where $u$ is the corresponding solution of \eqref{CP} and we used
the supremum property of $I(\delta)$. Let $u_{\eps}$ be the solution
of \eqref{CP} with the initial condition
$\left(1+\frac{\eps}{\delta}\right)f$. Then
$\|u_{\eps}(0)\|_{L^2}=\delta+\eps$. By Step 1,
\begin{align*}
I(\delta+\eps) &\geq \iint |u_{\eps}(t,x)|^{\frac4N +2} \,dt\,dx \\
& \geq \left(1+\frac{\eps}{\delta}\right)^{\frac4N +2}
\iint|u(t,x)|^{\frac4N +2} - C_2 \frac{\eps}{\delta}\, \delta^{\frac 8N+2}.
\end{align*}
By \eqref{cond_u_delta}, we get
\begin{equation*}
I(\delta +\eps) \geq \left[1+\left(\frac4N +2\right)\, \frac{\eps}{\delta} \right]
\left(I(\delta)-\delta^{\frac8N +1}\eps\right) - C_2 \delta^{\frac 8N+1} \eps.
\end{equation*}
Lemma \ref{L:first_est} implies $I(\delta) \geq C_S
\delta^{\frac4N+2} - C_0 \delta^{\frac8N+2}$, hence,
\begin{align*}
I(\delta+\eps) & \geq I(\delta)+C_S \,\left(\frac4N +2\right) \,\delta^{\frac4N+1}\eps \\
& - \left[ \left(\frac4N +2\right) C_0 + \left(1+\left(\frac4N+2\right) \frac{\eps}{\delta} \right)
+ C_2 \right] \delta^{\frac8N +1}\eps.
\end{align*}
Now if $\eps < \frac12 \delta$ and 
$$\delta < \left(
\frac{C_S}{4+6C_0+C_2} \right)^{N/4},$$ 
the last term in the
expression above will be less than $2C_S \delta^{\frac4N+1}\eps $,
and thus, the right side in \eqref{E:rate} follows with $c_1 =
\frac4N \, C_S$.

The upper bound on $I(\delta+\eps)$ follows similarly from Step 1
and Lemma \ref{L:first_est}, obtaining the left side in
\eqref{E:rate} with $C_1 = 2 C_S \left(\frac4N +2\right)$.
\end{proof}

We next prove Proposition \ref{P:super_add}.
\begin{proof}
Without loss of generality, we can assume $0<\alpha\leq \beta$.

\smallskip

\noindent\emph{Step 1}. We first show that there exists a large
constant $C_3>0$ such that the conclusion of the proposition holds
if
\begin{equation}
\label{large_alpha}
 C_3\beta^{\frac{2}{N}+1}\leq \alpha\leq \beta.
\end{equation}
By Lemma \ref{L:first_est}, 
\begin{align*}
I(\alpha)+I(\beta)&\leq C_S\alpha^{\frac{4}{N}+2}+C_S\beta^{\frac{4}{N}+2}+2C_0\beta^{\frac 8N+2},\\
\text{and}\quad
 C_S\left(\alpha^2+\beta^2\right)^{\frac{2}{N}+1}&\leq I\left(\sqrt{\alpha^2+\beta^2}\right)+2C_0\beta^{\frac 8N+2}.
\end{align*}
There is a constant $\kappa_N>0$ such that $1+x^{\frac{2}{N}+1}+\kappa_N x\leq (1+x)^{\frac{2}{N}+1}$ for $x\in [0,1]$.
As a consequence, $\alpha^{\frac 4N+2}+\beta^{\frac 4N+2}+\kappa_N\,\beta^{\frac 4N}\alpha^2\leq \left(\alpha^2+\beta^2\right)^{\frac 2N+1}$. Combining with the previous estimates, we get
$$
I(\alpha)+I(\beta)+C_S\,\kappa_N\,\beta^{\frac
4N}\alpha^2-4C_0\beta^{\frac 8N+2}
\leq I \left(\sqrt{\alpha^2+\beta^2}\right),
$$
which yields the announced result if $C_3$ is chosen large in
\eqref{large_alpha}.

\smallskip

\noindent\emph{Step 2}. We next show that the conclusion of the
Proposition still holds if
\begin{equation}
\label{small_alpha}
 0<\alpha<C_3\beta^{\frac{2}{N}+1},
\end{equation}
where $C_3$ is the constant defined in Step 1. Choosing $\delta_0$
small enough, $\beta\leq \delta_0$ and  \eqref{small_alpha} imply
$$
\frac{\alpha^2}{4\beta} \leq \sqrt{\alpha^2+\beta^2}-\beta\leq
\frac{\beta}{2}.
$$
By Lemma \ref{L:growthrate}, with $\delta=\beta$ and
$\eps=\sqrt{\alpha^2+\beta^2}-\beta$,
$$
I(\beta) \leq I\left(\sqrt{\alpha^2+\beta^2}\right)-c_1\beta^{\frac
4N+1}\left(\sqrt{\alpha^2+\beta^2}-\beta\right)\leq
I\left(\sqrt{\alpha^2+\beta^2}\right)-\frac{c_1}{4}\beta^{\frac
4N}\alpha^2.
$$
Combining with Lemma \ref{L:first_est} we get, taking a smaller $\delta_0$ if necessary,
\begin{multline*}
I(\alpha) + I(\beta)\leq I \left(\sqrt{\alpha^2 + \beta^2}\right) -
\frac{c_1}{4}\beta^{\frac 4N}\alpha^2 + 2C_S\alpha^{\frac{4}{N}+2}
\\ \leq I\left(\sqrt{\alpha^2+\beta^2}\right)+\alpha^2\beta^{\frac 4N}
\left(2C_SC_3^{\frac 4N}\beta^{\frac{8}{N^2}}-\frac{c_1}{4}\right),
\end{multline*}
which shows that the conclusion of the proposition holds also in
this case, provided $\delta_0>0$ is small enough.
\end{proof}

\subsection{Proof of the existence of the maximizer}
\label{SS:proof_existence}
Let us show Proposition \ref{P:maximizer}. We will prove the following more general result:
\begin{prop}
\label{P:large_data_maximizer}
Assume that there exists a constant $A>0$ such that 
\begin{enumerate}
\item \label{scattering} Scattering: for all $f\in L^2$ such that $\|f\|_{L^2}\leq A$, the solution $u$ of \eqref{CP} with initial condition $f$ is globally defined and
$$ \delta \leq A\Longrightarrow I(\delta)<\infty.$$
\item \label{superA} Superadditivity:
if $0<\sqrt{\alpha^2+\beta^2}=A$, and $\alpha,\beta>0$, then
\begin{equation*}
I(\alpha)+I(\beta)<I\left(A\right).
\end{equation*}
\end{enumerate}
Then there exists a solution $u_A$ of \eqref{CP} with initial condition $f_A\in L^2$ such that
$$ \|f_A\|_{L^2}=A,\quad \iint |u_A|^{2+\frac 4N}=I(A).$$
\end{prop}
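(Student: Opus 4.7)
The plan is a standard concentration-compactness argument: apply Lemma \ref{L:cc_MV} to a maximizing sequence, turn the linear decomposition into a nonlinear one via Corollary \ref{C:nonlinearPD}, and then invoke the superadditivity assumption \eqref{superA} to force the decomposition to collapse to a single profile of mass $A$. Pick a sequence $(f_n)$ in $L^2$ with $\|f_n\|_{L^2}=A$ and $\iint|u_n|^{2+4/N}\,dt\,dx \to I(A)$, where $u_n$ is the solution of \eqref{CP} with initial datum $f_n$ (globally defined with finite $L^{2+4/N}_{t,x}$ norm by the scattering hypothesis \eqref{scattering}). Extract a profile decomposition with linear profiles $U^j$, orthogonal parameters $\Gamma_n^j$, and remainder $h_n^J$, and let $V^j$ be the nonlinear profile attached to $(U^j,t_n^j)$. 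Setting $\alpha_j:=\|U^j(0)\|_{L^2}$, the $L^2$-Pythagorean expansion \eqref{pythagore_L2} gives $\sum_j\alpha_j^2\le A^2$ and in particular each $\alpha_j\le A$, so every $V^j$ is globally defined with finite $L^{2+4/N}_{t,x}$-norm by \eqref{scattering}; mass conservation together with $\lim_n\|U^j(t_n^j)-V^j(t_n^j)\|_{L^2}=0$ then yields $\|V^j\|_{L^2}=\alpha_j$.

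Combining Corollary \ref{C:nonlinearPD} with the asymptotic identity \eqref{NLS_L4} from Remark \ref{R:NL_pythagore} gives the crucial chain
$$
I(A)\;=\;\lim_{n\to\infty}\|u_n\|_{L^{2+4/N}_{t,x}}^{2+4/N}\;=\;\sum_j\|V^j\|_{L^{2+4/N}_{t,x}}^{2+4/N}\;\le\;\sum_j I(\alpha_j),
$$
subject to the mass budget $\sum_j\alpha_j^2\le A^2$. It remains to force exactly one nontrivial profile with $\alpha_1=A$. Superadditivity at $A$, applied with $\beta=\sqrt{A^2-\alpha^2}$, already yields $I(\alpha)<I(A)$ for every $\alpha\in(0,A)$, which rules out a single profile with $\alpha_1<A$ and consequently forces $\|h_n^J\|_{L^2}\to 0$ once the decomposition has been reduced to one profile. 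To rule out two or more nontrivial profiles, group them into two asymptotically orthogonal packets of squared masses $\alpha_1^2$ and $\gamma^2:=\sum_{j\ge 2}\alpha_j^2\le A^2-\alpha_1^2$; iterating superadditivity on $(0,A]$ gives $\sum_j I(\alpha_j)\le I(\alpha_1)+I(\gamma)<I(A)$, which contradicts the chain above. Hence only $V^1$ survives, with $\alpha_1=A$, and setting $f_A:=V^1(0)$, $u_A:=V^1$ produces the desired maximizer.

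The hard part is this last superadditivity step: hypothesis \eqref{superA} is stated only at the threshold mass $A$, while the profile decomposition generally produces several profiles whose squared masses sum to some value in $(0,A^2]$ together with a possibly nonvanishing $L^2$-remainder. This gap is bridged either by reading \eqref{superA} in the stronger dichotomy form valid on the entire range $(0,A]$ (the form established in Proposition \ref{P:super_add} for the small-data regime, and presumably the intended one), or by using the long-time perturbation Lemma \ref{L:longperturb} to merge the tail profiles, and when needed the remainder, into a single approximate solution of the appropriate mass, so that only the one-step hypothesis at $A$ is invoked.
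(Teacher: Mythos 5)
Your overall strategy---maximizing sequence, linear profile decomposition via Lemma \ref{L:cc_MV}, nonlinear profiles $V^j$, and superadditivity---matches the paper's. The preliminary steps are sound: hypothesis \eqref{scattering} makes each $V^j$ global and scattering since $\alpha_j\leq A$, mass conservation gives $\|V^j(0)\|_{L^2}=\alpha_j$, and \eqref{NLS_L4} yields $I(A)\leq\sum_j I(\alpha_j)$.

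However, the pivotal step---``iterating superadditivity on $(0,A]$ gives $\sum_j I(\alpha_j)\leq I(\alpha_1)+I(\gamma)$''---is not available from the stated hypotheses. Hypothesis \eqref{superA} is an inequality only at the threshold, for pairs $(\alpha,\beta)$ with $\alpha^2+\beta^2=A^2$; it says nothing about $I(\alpha')+I(\beta')$ when $\alpha'^2+\beta'^2<A^2$, which is what you would need in order to absorb the tail profiles into a single term $I(\gamma)$. There is also a second subtlety: because the $L^2$ remainder $h_n^J$ may carry positive mass, $\alpha_1^2+\gamma^2\leq A^2$ with possibly strict inequality, so concluding $I(\alpha_1)+I(\gamma)<I(A)$ from \eqref{superA} alone would additionally require monotonicity of $I$, which is not assumed and not obvious for the nonlinear flow. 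You diagnose both problems in your closing paragraph, but your first fix (``reading \eqref{superA} in the stronger dichotomy form on all of $(0,A]$'') simply strengthens the hypothesis, whereas the proposition only assumes the one-step inequality at $A$ and the proof must work with that. Your second fix is the paper's argument and does work: pull out exactly \emph{one} profile, set $V_n=\Gamma_n^1(V^1)$ with $\|V_n(0)\|_{L^2}=\eps$, and let $W_n$ solve \eqref{CP} from the datum $f_n-V_n(0)$, whose $L^2$ norm tends to $\sqrt{A^2-\eps^2}$. By the very definition of $I$ together with Lemma \ref{L:longperturb}, $\limsup_n\iint|W_n|^{2+4/N}\leq I\bigl(\sqrt{A^2-\eps^2}\bigr)$; Corollary \ref{C:nonlinearPD} gives $u_n\approx V_n+W_n$ up to a vanishing error, and the orthogonal Pythagorean expansion then yields $I(A)\leq I(\eps)+I\bigl(\sqrt{A^2-\eps^2}\bigr)$, so \eqref{superA} is invoked exactly once, at the threshold, to contradict the existence of two nontrivial profiles. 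The same device, with the single surviving profile against the remainder, shows $\|h_n^J\|_{L^2}\to 0$, again without needing monotonicity or sub-threshold superadditivity.
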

In view of the small data global well-posedness theory and Proposition \ref{P:super_add}, Proposition \ref{P:large_data_maximizer} implies Proposition \ref{P:maximizer}. Let us prove Proposition \ref{P:large_data_maximizer}.

Let $\{u_n\}_n$ be a sequence of
solutions to \eqref{CP} with initial data $f_{n}$ such that
$$
\left\|f_{n}\right\|_{L^2(\cR^N)} = A,
\quad \lim_{n\to \infty} \iint_{\cR^N} |u_n|^{\frac{4}{N}+2} = I(A).
$$
We will show that there exist a subsequence of $\{u_n\}_n$ and a
sequence $\{\Gamma_n\}_n$ of transformations such that
$\left\{\Gamma_n(u_n)\right\}_n$ converges strongly in $L^2$. Consider, after extraction, a profile decomposition
of the sequence $\{f_{n}\}_n$:
\begin{equation}
 \label{profile_decompo}
f_{n}=\sum_{j=1}^J \Gamma_n^j\left(
U^j\right)_{\restriction t=0}+h_{n}^J.
\end{equation}
It is sufficient to show that $U^j=0$ except for one $j$ and that $\lim_{n\to
\infty} \|h_{n}^J\|_{L^2}=0$, which we will do in two steps.

\smallskip

\noindent\emph{Step 1: no dichotomy}. 
First assume that there are at least two nonzero profiles, say
$U^1\neq 0$ and $U^2\neq 0$. Let $V^1$ be the nonlinear profiles associated to $\{U^1,t_n^1\}$ and $V_n$ the solution of \eqref{CP} given by
$$
V_{n}=\Gamma_n^1(V^1).
$$
Let $W_n$ be the sequence of solutions to \eqref{CP} with initial
condition
$$
W_{n}(0)=f_{n}-V_{n}(0).
$$
Let $r_n=u_n-V_n-W_n$. By assumption \eqref{scattering}, all the nonlinear profiles $V^j$ scatter. Thus, one can use Corollary \ref{C:nonlinearPD}, showing
$$
\lim_{n\to \infty} \sup_{t\in \RR} \|r_n(t)\|_{L^2}=0.
$$
Furthermore, (see \eqref{pythagore_L2} and Remark \ref{R:NL_pythagore})
\begin{gather}
 \label{ortho_L2}
\int |f_{n}|^2=\int |V_{n}(0)|^2+\int |W_{n}(0)|^2+o_n(1)\\
\label{ortho_L4} \iint |u_n|^{\frac{4}{N}+2}=\iint
|V_n|^{\frac{4}{N}+2}+\iint |W_n|^{\frac{4}{N}+2}+o_n(1).
\end{gather}
Let $\eps=\|U^1(0)\|_{L^2}$. Then for all $n$,
$
\eps=\|V_{n}(0)\|_{L^2}$.
By \eqref{ortho_L2},
$$
\|W_{n}(0)\|_{L^2}^2=A^2-\eps^2+o_n(1).
$$
By our assumptions, $\eps>0$ (otherwise, $U^1$ would be zero) and
$A^2-\eps^2>0$ (otherwise, $U^2$ would be zero). Using that
$\iint |u_n|^{\frac{4}{N}+2}$ tends to $I(A)$ as $n\to \infty$, and that by Lemma \ref{L:longperturb}, $\limsup_n \int |W_n|^{\frac{4}{N}+2}\leq I(\sqrt{A^2-\eps^2})$, we get by \eqref{ortho_L4}
\begin{equation}
 \label{absurd_I}
I(A)\leq I(\eps)+I\left(\sqrt{A^2-\eps^2}\right).
\end{equation}
This contradicts assumption \eqref{superA}, concluding Step 1.

\smallskip

\noindent\emph{Step 2: non vanishing and the end of the proof.}
There must be one nonzero profile in
\eqref{profile_decompo}. If not, then
$$
\lim_{n\to \infty} \iint |u_n|^{\frac{4}{N}+2}=0,
$$
showing that $I(A)=0$, a contradiction. It remains to show that
the remainder $h_n=h_{n}^J$ in \eqref{profile_decompo} tends to $0$ in
$L^2$. Denote by
$$
\eps=\lim_{n\to \infty}\|h_{n}\|_{L^2},$$
then, using again Lemma \ref{L:longperturb}, we get $I(A)\leq I(\sqrt{A^2-\eps^2})$, which shows by assumption \eqref{superA} that
$\eps=0$.

Denoting by $U^1$ the only nonzero profile in \eqref{profile_decompo},
we have shown that $(\Gamma_n^1)^{-1}(u_n)$ tends to $U^1$ in
$L^2$, and therefore,
$$
\|U^1\|_{L^2}=A,\quad \iint|U^1|^{\frac{4}{N}+2}=I(A),
$$
concluding the proof of the proposition.
\qed

\section{Estimate of the maximum of the Strichartz norm}
\label{S:estimate}

In the remainder of the paper, we restrict ourselves to $1D$ and $2D$. In this section we prove the second part of Theorem \ref{T:main}:
\begin{prop}
\label{P:estimate} Assume that $N=1$ or $N=2$. Then as $\delta\to 0$,
$$
I(\delta)=\iint\left|u_{\delta}\right|^{2+\frac
4N}=C_S\delta^{2+\frac 4N}+\gamma D_N\delta^{2+\frac
8N}+\OOO\left(\delta^{2+\frac{12}{N}}\right),
$$
where
$\ds D_1 = \frac1{\pi} \sum_{k\geq 1}\frac{(2k)!}{k\, 9^{k} \,
(k!)^2} \approx 0.0867$ and $\ds D_2 = \frac1{2\pi} \,\ln \frac43
\approx 0.0458$.
\end{prop}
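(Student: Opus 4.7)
The plan is a Duhamel expansion approach: for the lower bound we take $f = \delta G_0$ as initial data; for the upper bound we apply the expansion to the maximizer $f_\delta$ (which exists by Proposition \ref{P:maximizer}) and control its deviation from $\delta G_0$ using Theorem \ref{T:ortho}.

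\smallskip

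\noindent\textbf{Lower bound.} Let $u$ solve \eqref{CP} with $u|_{t=0} = \delta G_0$ and write $u = \delta G + w$, where $G$ is as in \eqref{def_G}. Duhamel's formula gives $w = i\gamma\int_0^t e^{i(t-s)\Delta/2}(|u|^{4/N}u)\,ds$, and Claim \ref{C:first_est} yields $\|w\|_{L^{2+4/N}_{t,x}} = \OOO(\delta^{1+4/N})$. Expanding $|u|^{2+4/N} = (\delta^2|G|^2 + 2\delta\,\re(\bar{G}w) + |w|^2)^{1+2/N}$ to first order in $w$ and integrating gives
\begin{multline*}
\iint |u|^{2+4/N}\,dt\,dx = \delta^{2+4/N}\iint |G|^{2+4/N}\,dt\,dx \\
+ \left(2+\tfrac{4}{N}\right)\delta^{1+4/N}\re\iint|G|^{4/N}\bar{G}\,w\,dt\,dx + \OOO(\delta^{2+12/N}),
\end{multline*}
the remainder absorbing $\OOO(\delta^{4/N}\|w\|^2_{L^{2+4/N}_{t,x}})$. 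The leading term equals $C_S\delta^{2+4/N}$ by Theorem \ref{T:Foschi}. Iterating Duhamel once replaces $w$ by $w_0 := i\gamma\delta^{1+4/N}\int_0^t e^{i(t-s)\Delta/2}(|G|^{4/N}G)\,ds$, with error $\OOO(\delta^{1+8/N})$ in $L^{2+4/N}_{t,x}$ (estimated via Strichartz and the elementary inequality $||a|^{4/N}a - |b|^{4/N}b| \leq C(|a|^{4/N}+|b|^{4/N})|a-b|$); the cross term then equals $\gamma D_N\delta^{2+8/N} + \OOO(\delta^{2+12/N})$ by the very definition \eqref{def_c*1} of $D_N$. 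This proves the lower bound.

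\smallskip

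\noindent\textbf{Upper bound.} Let $f_\delta$ be a maximizer and $u_L := e^{it\Delta/2}f_\delta$. The same expansion applied to the solution $u_\delta$ gives
\[
I(\delta) = \iint|u_L|^{2+4/N}\,dt\,dx + \gamma\,\mathcal{F}(u_L) + \OOO(\delta^{2+12/N}),
\]
where $\mathcal{F}(v) := -\left(2+\tfrac{4}{N}\right)\im\iint|v|^{4/N}\bar{v}\int_0^t e^{i(t-s)\Delta/2}(|v|^{4/N}v)\,ds\,dt\,dx$ is homogeneous of degree $2+8/N$ with $\mathcal{F}(\delta G) = D_N\delta^{2+8/N}$. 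A preliminary compactness argument (based on the profile decomposition of Lemma \ref{L:cc_MV}, the Pythagorean identity \eqref{pythagore_L4}, and the strict characterization in Theorem \ref{T:Foschi}) shows that, after applying a suitable symmetry of \eqref{invariances}--\eqref{PC}, $f_\delta/\delta \to G_0$ in $L^2$ as $\delta\to 0$. A further adjustment by an element of the finite-dimensional symmetry orbit of $G_0$, via the implicit function theorem, lets us write $f_\delta = \delta(G_0 + \varphi)$ with $\|\varphi\|_{L^2} \to 0$ and $\varphi$ satisfying the orthogonality conditions \eqref{ortho_intro}. Taylor-expanding $f \mapsto C_S\|f\|_{L^2}^{2+4/N} - \|e^{it\Delta/2}f\|^{2+4/N}_{L^{2+4/N}_{t,x}}$ at $G_0$ and invoking Theorem \ref{T:ortho} then gives
\[
C_S\delta^{2+4/N} - \iint|u_L|^{2+4/N}\,dt\,dx \geq c\,\delta^{2+4/N}\|\varphi\|_{L^2}^2 - C\delta^{2+4/N}\|\varphi\|_{L^2}^3,
\]
while a direct continuity bound yields $|\mathcal{F}(u_L) - \mathcal{F}(\delta G)| \leq C\delta^{2+8/N}\|\varphi\|_{L^2}$. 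Inserting both bounds into the expansion of $I(\delta)$ and optimizing in $\|\varphi\|_{L^2}$ (the extremum occurs at $\|\varphi\|_{L^2} \sim \delta^{4/N}$) yields $I(\delta) \leq C_S\delta^{2+4/N} + \gamma D_N\delta^{2+8/N} + \OOO(\delta^{2+12/N})$.

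\smallskip

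The main obstacle is the spectral-gap reduction bringing the maximizer into the form $\delta(G_0 + \varphi)$ with $\varphi$ satisfying \eqref{ortho_intro}: it requires simultaneously the existence of a maximizer (Proposition \ref{P:maximizer}), the compactness of near-maximizers of Foschi's inequality (via Lemma \ref{L:cc_MV} and Theorem \ref{T:Foschi}), and the quantitative coercivity of Theorem \ref{T:ortho} on the orthogonal of the null directions. Once this reduction is in place, the proof amounts to the Duhamel expansion sketched above and completing the square in $\|\varphi\|_{L^2}$.
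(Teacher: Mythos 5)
Your proof is correct and follows essentially the same road as the paper: existence of the maximizer (Proposition \ref{P:maximizer}), compactness of near-maximizers for the linear Strichartz inequality (Lemma \ref{L:cc_MV} plus Theorem \ref{T:Foschi}), an implicit-function-theorem adjustment to impose the orthogonality conditions \eqref{ortho_intro}, the Duhamel/lens expansion with the same $\mathcal{F}$-functional whose value at $\delta G$ gives $D_N\delta^{2+8/N}$ (this is exactly \eqref{def_c*1}), and the coercivity of $Q$ (Theorem \ref{T:ortho}) to close the estimate.

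The one genuinely different detail is the final step. The paper proceeds by a two-stage bootstrap: first it shows $\|\varphi_\delta\|_{L^2}=\OOO(\delta^{1+2/N})$ (Proposition \ref{P:close_G}, Step 3), uses this a priori bound to absorb the cubic error in the second-order expansion \eqref{est_I_interm}, then compares with the lower bound from Lemma \ref{L:gaussian_devt} to improve to $\|\varphi_\delta\|_{L^2}=\OOO(\delta^{1+4/N})$, and only then substitutes back. You instead combine coercivity of $Q$, the cubic error, and the linear cross-term $\delta^{4/N}\|\varphi\|$ in one shot, and complete the square in $\|\varphi\|$: the cubic $C\|\varphi\|^3$ is absorbed by $c\|\varphi\|^2$ once $\|\varphi\|\to 0$, and the maximum of $-\frac{c}{2}\|\varphi\|^2 + C\delta^{4/N}\|\varphi\|$ is $\OOO(\delta^{8/N})$ with no further input. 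This removes the need for the intermediate power-law bound $\|\varphi\|=\OOO(\delta^{1+2/N})$; only $\|\varphi\|=o(1)$ is required, which is a modest but genuine streamlining. Finally, a small notational slip: writing $f_\delta = \delta(G_0+\varphi)$ with $\|f_\delta\|_{L^2}=\delta$ and $\varphi\perp G_0$ would force $\varphi=0$; you want $f_\delta=\alpha_\delta G_0+\varphi_\delta$ with $\alpha_\delta^2+\|\varphi_\delta\|_{L^2}^2=\delta^2$ (or $f_\delta=\alpha(G_0+\varphi)$ with $\alpha=\delta/\sqrt{1+\|\varphi\|^2}$). This does not affect the argument — the bounds you state, e.g. $C_S\delta^{2+4/N}-\iint|u_L|^{2+4/N}\geq c\,\delta^{2+4/N}\|\varphi\|^2 - C\delta^{2+4/N}\|\varphi\|^3$ and $|\mathcal{F}(u_L)-\mathcal{F}(\delta G)|\leq C\delta^{2+8/N}\|\varphi\|$, are correct once the normalization is fixed.
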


Before proving Proposition \ref{P:estimate}, we define the quadratic form associated to the maximum of the Strichartz estimate that appears in Theorem \ref{T:ortho}. By  Theorem \ref{T:Foschi}, if $G$ is the Gaussian
solution defined by \eqref{def_G} and $\varphi\in L^2$, then
$$
C_S \left(\int |G_0+\varphi|^2\right)^{1+\frac{2}{N}}-\iint
\left|G+e^{i\frac t2 \Delta}\varphi\right|^{2+\frac{4}{N}}\geq 0.
$$
Expanding the above inequality and using that $G$ is a maximizer, we
obtain that the linear part vanishes, i.e.,
\begin{equation}
\label{linear_lagrange}
\forall \varphi\in L^2,\quad C_S\re \int G_0\varphi=\re \iint |G|^{\frac 4N}\overline{G}\,e^{i\frac{t}{2}\Delta}\varphi.
\end{equation} 
The expansion at second order in $\varphi$ yields
\begin{equation}
 \label{devt_Q}
C_S \left(\int |G_0+\varphi|^2\right)^{1+\frac{2}{N}}-\iint
\left|G+e^{i\frac t2 \Delta}\varphi\right|^{2 + \frac{4}{N}} =
Q(\varphi)+\OOO\left(\|\varphi\|_{L^2}^3\right),
\end{equation}
where $Q$ is a (real) nonnegative symmetric quadratic form on $L^2$
defined by
\begin{multline}
 \label{defQ}
Q(\varphi) = C_S\left[\frac{N+2}{N}\int |\varphi|^2+\frac{4(N+2)}{N^2}
\left(\re \int G_0\varphi\right)^2\right]\\
-\frac{(N+2)^2}{N^2}\iint |G|^{\frac{4}{N}}\left|e^{i\frac t2\Delta}
\varphi\right|^2-\frac{2(N+2)}{N^2}\re \iint |G|^{\frac 4N-2}\,
\overline{G}^2\left(e^{i\frac t2\Delta}\varphi\right)^2.
\end{multline}

By the transformations of the linear equation (respectively,
multiplication by a real number, phase shift, space translation,
Galilean invariance, scaling and time translation), we have
\begin{equation}
 \label{E:orthodir1}
Q(G_0)=Q(iG_0)=Q(xG_0)=Q(ixG_0)=Q(x^2G_0)=Q(ix^2G_0)=0,
\end{equation}
if $N=1$ and
\begin{equation}
 \label{E:orthodir2}
Q(G_0)=Q(iG_0)=Q(x_jG_0)=Q(ix_jG_0)=Q(|x|^2G_0)=Q(i|x|^2G_0)=0,
\end{equation}
(where $j=1,2$) if $N=2$. Theorem \ref{T:ortho}, which will be proved in Section \ref{S:quadratic}
 states that $Q$ is positive definite in the subspace of functions in
$L^2$ that are orthogonal to the directions in \eqref{E:orthodir1} or \eqref{E:orthodir2}. This non-degeneracy property is crucial in the proof of Proposition \ref{P:estimate}, which is divided in two parts.

\subsection{Choice of the maximizer}
We first give a corollary to the linear profile decomposition that will be needed in the proof. Recall from
\eqref{def_G} the definition of the normalized Gaussian $G$ .

\begin{lemma}
 \label{L:cc}
Let $\{f_{n}\}_n$ be a sequence in $L^2(\RR^N)$ such that
\begin{equation}
\label{CV_L2_norm} \lim_{n\to \infty} \|f_{n}\|_{L^2}=1,
\end{equation}
and
\begin{equation}
 \label{CV_L4}
\lim_{n\to \infty} \iint\left| e^{i\frac{t}{2} \Delta} f_{n}
\right|^{\frac4N+2} \,dt\,dx = C_S.
\end{equation}
Then there exist a subsequence of $\left\{f_{n}\right\}_n$ (still
denoted by $\left\{f_{n}\right\}_n$), a phase $\theta_0$ and a
sequence $\{\Gamma_n\}_n$ of transformations of the form
\eqref{def_Gamma} such that
\begin{equation}
\label{to_gaussian} \lim_{n\to\infty}
\left\|f_{n}-e^{i\theta_0}\Gamma_n(G)\right\|_{L^2}=0,
\end{equation}
where $G$ is the normalized Gaussian solution defined in
\eqref{def_G}.
\end{lemma}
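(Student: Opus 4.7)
The plan is to apply the linear profile decomposition of Lemma \ref{L:cc_MV} to $\{f_{n}\}$ and combine the two Pythagorean identities with the Strichartz inequality applied profile-by-profile. This will force all but one profile to vanish, and the surviving profile will be identified as a phase-and-symmetry translate of $G$ via Foschi's Theorem \ref{T:Foschi}.

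First, extract a subsequence and apply Lemma \ref{L:cc_MV} to obtain profiles $\{U^j\}$, orthogonal parameters $\{\Gamma_n^j\}$ and remainders $\{h_n^J\}$. Set $\alpha_j := \|U^j(0)\|_{L^2}^2$ and $\beta_j := \|U^j\|_{L^{4/N+2}_{t,x}}^{4/N+2}$. Strichartz \eqref{strichartz} applied to each linear profile gives $\beta_j \leq C_S\, \alpha_j^{1+2/N}$. Combining \eqref{pythagore_L2}, \eqref{pythagore_L4}, the vanishing of $\|e^{it\Delta/2}h_n^J\|_{L^{4/N+2}_{t,x}}$ as $J\to\infty$, and the hypotheses \eqref{CV_L2_norm} and \eqref{CV_L4}, a diagonal subsequence satisfies
\begin{equation*}
\sum_{j\geq 1}\alpha_j \;\leq\; 1,\qquad \sum_{j\geq 1}\beta_j \;=\; C_S.
\end{equation*}
Since $\alpha_j\leq 1$ and $1+2/N>1$, we have $\alpha_j^{1+2/N}\leq \alpha_j$ with strict inequality whenever $\alpha_j\in(0,1)$, so
\begin{equation*}
C_S \;=\; \sum_{j} \beta_j \;\leq\; C_S \sum_{j} \alpha_j^{1+2/N} \;\leq\; C_S \sum_{j} \alpha_j \;\leq\; C_S,
\end{equation*}
and every inequality must be an equality. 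This forces $\sum_j\alpha_j=1$ (hence $\|h_n^J\|_{L^2}\to 0$ as $n,J\to\infty$), and exactly one profile $U^{j_0}$ satisfies $\alpha_{j_0}=1$ and $\beta_{j_0}=C_S$, while all other profiles vanish.

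Thus $U^{j_0}$ is a Strichartz maximizer of unit mass, and by Theorem \ref{T:Foschi} we may write $U^{j_0}=e^{i\theta_0}\,\Gamma_{*}(G)$ for some $\theta_0\in\RR$ and some $\Gamma_{*}$ of the form \eqref{def_Gamma}. Setting $\Gamma_n := \Gamma_n^{j_0}\circ \Gamma_{*}$, which is again of the form \eqref{def_Gamma} since such transformations form a group, the decomposition \eqref{profile} becomes $f_{n}=e^{i\theta_0}\Gamma_n(G)(0,\cdot)+h_n^J$ with $\|h_n^J\|_{L^2}\to 0$, establishing \eqref{to_gaussian}. The key mechanism is the strict convexity inequality $\alpha_j^{1+2/N}<\alpha_j$ for $\alpha_j\in(0,1)$: it is what compels the mass to concentrate in a single profile, and it is the only nontrivial step in the argument, the rest being standard bookkeeping with the profile decomposition and a direct appeal to Foschi's classification of extremizers.
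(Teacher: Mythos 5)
Your proof is correct and follows essentially the same route as the paper: apply the linear profile decomposition, use Strichartz on each piece, and exploit the strict superadditivity of $t\mapsto t^{1+2/N}$ on $[0,1]$ to force all the mass into a single profile, which is then identified via Foschi's classification. The only (cosmetic) difference is that you sum Strichartz over all profiles $U^j$ at once, whereas the paper isolates $U^1$ and applies Strichartz to $U^1$ and to the lumped remainder $w_n^1$, reaching the same convexity inequality $1\leq \|U^1\|_{L^2}^{4/N+2}+(1-\|U^1\|_{L^2}^2)^{2/N+1}$.
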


\begin{proof}
This is an application of Lemma \ref{L:cc_MV} and the
uniqueness result of Foschi \cite{Foschi07}.

After extraction of a subsequence, the sequence $\{f_{n}\}_n$ admits
a profile decomposition of the form \eqref{profile}. At least one of
the profiles is nonzero. Indeed, if it was not the case,
$\left\|e^{i\frac t2 \Delta}f_n\right\|_{L^{\frac4N+2}}$ would tend
to $0$, a contradiction with \eqref{CV_L4}. Reordering the profiles,
we may assume that $U^1 \neq 0$. By the Pythagorean expansion
\eqref{pythagore_L4} and by \eqref{CV_L4}
$$
C_S + o_n(1) = \iint \left|e^{i\frac t2 \Delta}f_n\right|^{\frac
4N+2}\,dt\,dx \leq C_S \left(\left\|U^1\right\|_{L^2}^{\frac4N+2} +
\left\|w_n^1\right\|_{L^2}^{\frac4N+2} \right) + o_n(1).
$$
Using that by \eqref{pythagore_L2},
$\left\|w_n^1\right\|_{L^2}^2=1-\left\|U^1\right\|_{L^2}^2+o_n(1)$,
we obtain from the previous expression that
$$
1 \leq \left(\left\|U^1\right\|_{L^2}^2\right)^{\frac4N+2} +
\left(1-\left\|U^1\right\|_{L^2}^2\right)^{\frac4N+2},
$$
which shows that $\|U^1\|_{L^2}=1$ (we already excluded the case
$\|U^1\|_{L^2}=0$), and by \eqref{pythagore_L2} again
$$
\lim_{n\to \infty} \|f_{n}- \Gamma^1_n(U^1)(0)\|_{L^2}=0.
$$
By our assumptions on $f_{n}$ we obtain, passing to the limit, that
$\|U^1(0)\|_{L^2}=1$ and $\|U^1\|_{L^{\frac4N+2}}^{\frac{4}{N}+2
} =C_S$, which shows by Theorem \ref{T:Foschi} that $U^1(0)=G_0$ up
to the symmetries of the equation (i.e., the transformations of the
form \eqref{def_Gamma} and the multiplication by a phase
$e^{i\theta_0}$), which completes the proof.
\end{proof}

\begin{prop}
 \label{P:close_G}
There exists $\delta_0>0$ such that if 
 $\left\{u_{\delta}^*\right\}_{0<\delta<\delta_0}$ is a family of maximizers, i.e. $u_{\delta}^*$ satisfies \eqref{E:maximizer}, then for all $\delta\in(0,\delta_0)$ there exists a transformation $u_\delta$ of $u_{\delta}^*$ such that $f_{\delta}=u_{\delta}(0,x)$ satisfies:
$$
f_{\delta} = \alpha_{\delta} G_0+\varphi_{\delta},\quad
\lim_{\delta\to 0^+}\frac{\alpha_{\delta}}{\delta}=1,
$$
with $\varphi_{\delta}$ satisfying the orthogonality properties
\eqref{ortho_intro} and
\begin{equation}
\label{cond_phi}
\forall \delta\in (0,\delta_0),\quad \|\varphi_{\delta}\|_{L^2}\leq C\delta^{1+\frac 2N}.
\end{equation}
\end{prop}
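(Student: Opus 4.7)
Given a family of maximizers $\{u_\delta^*\}_{0<\delta<\delta_0}$ with $\|f_\delta^*\|_{L^2}=\delta$, I would first show that, up to symmetries of \eqref{CP}, $f_\delta^*/\delta\to G_0$ in $L^2$ as $\delta\to 0^+$. For any sequence $\delta_n\to 0^+$, set $\tilde f_n=f_{\delta_n}^*/\delta_n$, so that $\|\tilde f_n\|_{L^2}=1$. Combining Claim \ref{C:first_est} applied to $u_{\delta_n}^*$ with Lemma \ref{L:first_est} (using the elementary inequality $|a^{\frac4N+2}-b^{\frac4N+2}|\leq C(a^{\frac4N+1}+b^{\frac4N+1})|a-b|$ to transfer the pointwise $L^{\frac4N+2}$ comparison to the integrated quantity),
\begin{equation*}
\iint\bigl|e^{i\frac{t}{2}\Delta}\tilde f_n\bigr|^{\frac4N+2}dt\,dx=\delta_n^{-(\frac4N+2)}\bigl(I(\delta_n)+O(\delta_n^{\frac8N+2})\bigr)=C_S+O(\delta_n^{\frac4N}).
\end{equation*}
Lemma \ref{L:cc} then supplies, along a subsequence, a phase $\theta_0$ and a sequence $\{\Gamma_n\}$ of transformations such that $\tilde f_n-e^{i\theta_0}\Gamma_n(G)(0)\to 0$ in $L^2$. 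Applying $\Gamma_n^{-1}$ and the phase $e^{-i\theta_0}$ to $u_{\delta_n}^*$ (these are symmetries of \eqref{CP}) produces a new family of maximizers whose initial data $f_{\delta_n}^{(1)}$ satisfies $\|f_{\delta_n}^{(1)}-\delta_n G_0\|_{L^2}=o(\delta_n)$. A standard contradiction argument upgrades this to the uniform statement: for every sufficiently small $\delta$, some transformation of $u_\delta^*$ has initial data $f_\delta^{(1)}$ with $\|f_\delta^{(1)}-\delta G_0\|_{L^2}=o(\delta)$ as $\delta\to 0^+$.

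\textbf{Second step: modulation to enforce orthogonality.}
I then perform a further modulation using the full $(4+2N)$-parameter group of symmetries generated by \eqref{invariances} and \eqref{PC}. The infinitesimal generators of this group at $G_0$, together with the free real scalar $\alpha$ of the decomposition $f=\alpha G_0+\varphi$, span exactly the null space $\vect\{G_0,iG_0,x_jG_0,ix_jG_0,|x|^2G_0,i|x|^2G_0\}$ of $Q$ identified in \eqref{E:orthodir1}--\eqref{E:orthodir2}. By the inverse function theorem applied at the identity element of the group and the base function $G_0$, any $f^{(1)}$ sufficiently close to $G_0$ in $L^2$ admits a unique decomposition $T(f^{(1)})=\alpha\,G_0+\varphi$ with $T$ in the group, $\alpha>0$, and $\varphi$ satisfying \eqref{ortho_intro}, and $(T,\alpha)$ depending smoothly on $f^{(1)}$. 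Applying this to $f_\delta^{(1)}/\delta$ from Step 1 and rescaling by $\delta$, we obtain the required $u_\delta$, with initial data $f_\delta=\alpha_\delta G_0+\varphi_\delta$. The orthogonality $\int\varphi_\delta G_0=0$ combined with $\|f_\delta\|_{L^2}=\delta$ yields $\alpha_\delta^2+\|\varphi_\delta\|_{L^2}^2=\delta^2$, so the qualitative bound $\|\varphi_\delta\|_{L^2}=o(\delta)$ from Step 1 forces $\alpha_\delta/\delta\to 1$.

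\textbf{Third step: sharp bound on the remainder via Theorem \ref{T:ortho}.}
To improve $\|\varphi_\delta\|_{L^2}=o(\delta)$ to $O(\delta^{1+2/N})$, set $\psi_\delta=\varphi_\delta/\alpha_\delta$, which inherits the orthogonality \eqref{ortho_intro}. Writing $f_\delta=\alpha_\delta(G_0+\psi_\delta)$ and using the scaling identity $C_S\|f_\delta\|_{L^2}^{\frac4N+2}-\iint|e^{i\frac{t}{2}\Delta}f_\delta|^{\frac4N+2}=\alpha_\delta^{\frac4N+2}\bigl(C_S\|G_0+\psi_\delta\|_{L^2}^{\frac4N+2}-\iint|G+e^{i\frac{t}{2}\Delta}\psi_\delta|^{\frac4N+2}\bigr)$ together with the expansion \eqref{devt_Q} gives
\begin{equation*}
C_S\|f_\delta\|_{L^2}^{\frac4N+2}-\iint\bigl|e^{i\frac{t}{2}\Delta}f_\delta\bigr|^{\frac4N+2}dt\,dx=\alpha_\delta^{\frac4N+2}\bigl[Q(\psi_\delta)+O\bigl(\|\psi_\delta\|_{L^2}^3\bigr)\bigr].
\end{equation*}
On the other hand, Claim \ref{C:first_est} applied to $f_\delta$ (as in Step 1), combined with $\iint|u_\delta|^{\frac4N+2}=I(\delta)=C_S\delta^{\frac4N+2}+O(\delta^{\frac8N+2})$ from Lemma \ref{L:first_est} and the Foschi bound $\iint|e^{i\frac{t}{2}\Delta}f_\delta|^{\frac4N+2}\leq C_S\delta^{\frac4N+2}$, controls the left-hand side by $O(\delta^{\frac8N+2})$. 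Since $\alpha_\delta\sim\delta$, this yields $Q(\psi_\delta)\leq C\delta^{\frac4N}+O(\|\psi_\delta\|_{L^2}^3)$. Theorem \ref{T:ortho} gives $Q(\psi_\delta)\geq c\|\psi_\delta\|_{L^2}^2$, and the cubic error is absorbed by the smallness of $\|\psi_\delta\|_{L^2}$ from Step 1, producing $\|\psi_\delta\|_{L^2}^2\leq C\delta^{\frac4N}$ and hence $\|\varphi_\delta\|_{L^2}=\alpha_\delta\|\psi_\delta\|_{L^2}\leq C\delta^{1+\frac2N}$.

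\textbf{Main obstacle.} The most delicate ingredient is the modulation of Step 2: verifying that the infinitesimal action at $G_0$ of the full symmetry group, supplemented by the scalar direction $\alpha$, is an isomorphism onto the null space of $Q$. The phase, spatial translation, and Galilean generators immediately supply the directions $iG_0$, $x_jG_0$, and $ix_jG_0$, while scaling, time translation, and the one-parameter conformal flow associated to \eqref{PC} generate combinations of $G_0, iG_0, |x|^2G_0, i|x|^2G_0$; linear independence must then be checked by direct Gaussian integral computations. Once this nondegeneracy is in place, Step 3 is a mechanical consequence of Theorem \ref{T:ortho} and the rough estimate of Lemma \ref{L:first_est}.
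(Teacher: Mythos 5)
Your proof is essentially correct and follows the same three-step strategy as the paper: concentration-compactness via Lemma \ref{L:cc} to show closeness to $\delta G_0$ up to symmetries, a modulation argument to enforce the orthogonality conditions \eqref{ortho_intro}, and coercivity of $Q$ from Theorem \ref{T:ortho} to upgrade the qualitative $o(\delta)$ bound to $O(\delta^{1+2/N})$. Step 3 is identical in substance.

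Two remarks. First, in Step 1 you propose simply applying $\Gamma_n^{-1}$ to the space-time solution $u_{\delta_n}^*$ rather than working at the level of initial data; this is legitimate (use $\sup_t\|u^*_{\delta_n}(t)-e^{i\frac t2\Delta}f^*_{\delta_n}\|_{L^2}\lesssim\delta_n^{1+4/N}$ from the small-data Cauchy theory, together with unitarity of the linear flow, to carry the $L^2$ estimate from time $0$ to the shifted time that $\Gamma_n^{-1}$ evaluates at) and is actually slightly slicker than what the paper does, which writes out $\Gamma_n(G)(0,\cdot)$ explicitly and then introduces a pseudo-conformal correction $e^{i t_n|y|^2/2}$ to remove the resulting chirp. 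Second, in Step 2 you assert that the infinitesimal action of a $(4+2N)$-parameter group (including the pseudo-conformal direction) together with the scalar $\alpha$ is an \emph{isomorphism} onto the $(4+2N)$-dimensional null space of $Q$, and hence that the decomposition is \emph{unique}. This is over-parameterized: at $G_0$ the pseudo-conformal generator $\tfrac{i}{2}|x|^2G_0$ is a linear combination of the time-translation generator $\tfrac{i}{2}(|x|^2-N)G_0$ and the phase generator $iG_0$, so you have $5+2N$ parameters with only a $(4+2N)$-dimensional image, giving surjectivity but not injectivity, and hence not uniqueness. The paper sidesteps this by using only the $3+2N$ parameters $(\theta_0,\rho_0,t_0,\xi_0,x_0)$ of \eqref{invariances} in its implicit function theorem (Claim \ref{C:implicit} and Appendix \ref{A:implicit}), with $\alpha_\delta:=\re\int f_\delta G_0$ defined a posteriori so that the remaining real orthogonality condition is automatic. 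Your argument is still salvageable since a submersion / implicit function theorem version gives existence of the modulation, which is all that is needed; but the claim of a unique decomposition should be dropped (or the parameter set trimmed to the nonredundant $3+2N$ directions).
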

By ``transformation'' we mean a symmetry of \eqref{CP} which is a combination of transformations of the form \eqref{invariances} and \eqref{PC}.
\begin{remark}
 \label{R:cond_phi}
We will later improve the estimates on $\varphi_{\delta}$ and $\alpha_{\delta}$ and obtain
(see \eqref{compare_alpha_delta}, \eqref{new_varphi}):
$$
\forall \delta\in(0,\delta_0),\quad\|\varphi_{\delta}\|_{L^2}\leq C\delta^{1+\frac 4N}
\text{ and }
\left|\alpha_{\delta}-\delta\right|\leq C \delta^{1+\frac 4N}.$$
\end{remark}
\begin{proof}
The proof is divided into three steps.

\smallskip

\noindent\emph{Step 1. Closeness to $G_0$.} 
In this step we show
that if $\delta$ is small enough, there exists a transformation 
$v_{\delta}$ of $u_{\delta}^*$ which satisfies the
maximizer equations \eqref{E:maximizer} and
\begin{equation}
 \label{v0n_to_G}
\lim_{\delta\to 0} \delta^{-1}\|g_{\delta}-\delta G_0\|_{L^2}=
0,\quad \text{where}\quad g_{\delta}(x)=v_{\delta}(0,x).
\end{equation}
Arguing by contradiction, we see that it is sufficient to show that for any sequence $\delta_n\to 0$ there exists (after extraction of a subsequence) a sequence of solutions $\left\{v_{\delta_n}\right\}_n$ that are obtained as transformations of $u_{\delta_n}^*$ and satisfy \eqref{v0n_to_G}. 

By Claim \ref{C:first_est} and Lemma \ref{L:first_est}, there exists a constant $C>0$ such that
\begin{equation*}
\left|\iint |e^{i\frac t2\Delta} f_{\delta_n}^*|^{2+\frac{4}{N}}\,dt\,dx-C_S
\delta_n^{2+\frac{4}{N}}\right|\leq C\delta_n^{2+\frac{8}{N}}.
\end{equation*}
By Lemma \ref{L:cc}, we obtain after extraction of subsequences that there exist $\theta_0\in
\RR$ and a sequence of transformations $\{\Gamma_n\}$ such that
\begin{equation}
\label{tildeu_cc}
\lim_{n\to\infty} \delta_n^{-1}\left\|f^*_{\delta_n}-\delta_n
e^{i\theta_0}\Gamma_n(G)_{\restriction t=0}\right\|_{L^2}=0.
\end{equation}
Note that, by \eqref{def_G},
\begin{equation*}
\Gamma_n(G)_{\restriction
t=0}=\rho_n^{\frac{N}{2}}e^{ix\cdot\xi_n}G\left(t_n,\rho_nx+x_n\right)
=\frac{\rho_n^{\frac N2}e^{ix\cdot\xi_n}}{\pi^{N/4}(1+it_n)^{N/2}}e^{-\frac{|\rho_nx+x_n|^2}{2(1+it_n)}}.
\end{equation*}
And thus, by the change of variable $y=\frac{\rho_nx+x_n}{\sqrt{1+t_n^2}}$,
\begin{multline*}
\left\|f_{\delta_n}^*(x)-\delta_ne^{i\theta_0}\Gamma_n\left(G\right)_{\restriction t=0}\right\|_{L^2}^2=
\\ \int \left|e^{i\tau_n+i\frac{t_n|y|^2}{2}+i\frac{\sqrt{1+t_n^2}y-x_n}{\rho_n}\cdot\xi_n}
\frac{\left(1+t_n^2\right)^{\frac N4}}{\rho_n^{\frac{N}{2}}}\overline{f}_{\delta_n}^*\left(\frac{\sqrt{1+t_n^2}y-x_n}{\rho_n}\right)-\frac{\delta_ne^{-\frac{|y|^2}{2}}} {\pi^\frac{N}{4}}
\right|^2dy,
\end{multline*}
where $e^{i\tau_n}=\left(\frac{\sqrt{1+t_n^2}}{1+it_n}\right)^{\frac N2}$.
Consider the solution $w_{\delta_n}$ of \eqref{CP} with initial condition
$$ h_{\delta_n}(x)=e^{i\tau_n+i\frac{\sqrt{1+t_n^2}y-x_n}{\rho_n}\cdot\xi_n}
\frac{\left(1+t_n^2\right)^{\frac N4}}{\rho_n^{\frac{N}{2}}}\overline{f}_{\delta_n}^*\left(\frac{\sqrt{1+t_n^2}y-x_n}{\rho_n}\right),$$
and the solution $v_{\delta_n}$ of \eqref{CP} with initial condition $g_{\delta_n}=e^{i\frac{t_n|y|^2}{2}}h_{\delta_n}$.
Then $w_{\delta_n}$ is an image of $u_{\delta_n}^*$ by phase, scaling, space translation and Galilean transformation (see \eqref{invariances}). Furthermore,  $v_{\delta_n}$ is obtained from $w_{\delta_n}$ with a combination of pseudo-conformal transformation and time translation. Namely:
$$v_{\delta_n}(t,x)=
\frac{t_n^{N/2}}{(t_n^2t+t_n)^{N/2}}
e^{\frac{it_n|x|^2}{2(t_n t+1)}} w_{\delta_n} \left(\frac{t}{1 +
t_n t}, \frac{t_n x}{t_n^2 t +t_n}\right).
$$
All these transformations preserve the $L^2$ norm and the global space-time $L^{2+\frac{4}{N}}$ norm, which shows that
$$ \left\|g_{\delta_n}\right\|_{L^2}=\delta_n, \quad \iint \left|v_{\delta_n}\right|^{\frac{4}{N}+2}=I(\delta_n).$$
By \eqref{tildeu_cc},
$$ \lim_{n\to \infty} \frac{1}{\delta_n}\left\|g_{\delta_n}-\delta_nG_0\right\|_{L^2}=0,$$
concluding the first step.

\smallskip

\noindent\emph{Step 2. Orthogonality conditions.}
We next show that the statement of the proposition holds if \eqref{cond_phi} is replaced by the weaker condition
\begin{equation}
\label{cond_phi'}
\lim_{\delta\to 0}\delta^{-1}\|\varphi_{\delta}\|_{L^2}=0.
\end{equation}
For this we must show that there exists a transformation $u_{\delta}$ of $v_{\delta}$ such that $\varphi_{\delta}$ satisfies the orthogonality conditions \eqref{ortho_intro}.
Consider the unit ball
$$ B_{L^2}(G_0,1)=\left\{f\in L^2,\; \|f-G_0\|_{L^2}<1\right\},$$
and define, for small $\delta>0$, a differentiable mapping
\begin{equation*}
\Phi_{\delta}:\RR\times(0,+\infty)\times \RR\times \RR^N\times \RR^N\times B_{L^2}(G_0,1)\longrightarrow \RR\times \RR\times \RR^N\times \RR^N\times\RR
\end{equation*}
as follows. If $\theta_0\in \RR$, $\Gamma_0\in (0,+\infty)\times \RR\times \RR^N\times \RR^N$, $f\in B_{L^2}(G_0,1)$, $\tilde{u}_{\delta}$ is the solution of \eqref{CP} with initial condition $\delta f$ and
$$ U_{\delta}(x)=\delta G_0-e^{i\theta_0}\Gamma_0\left(\tilde{u}_{\delta}\right)_{\restriction t=0}=\delta G_0-e^{i\theta_0}\rho_0^{\frac{N}{2}} e^{ix\cdot\xi_0}\tilde{u}_{\delta}\left(t_0,\rho_0x+x_0\right),$$
then $\Phi_{\delta}(\theta_0,\Gamma_0,f)=(\Phi_{\delta}^1,\Phi_{\delta}^2,\Phi_{\delta}^3,\Phi_{\delta}^4,\Phi_{\delta}^5)$ is defined by
\begin{align*}
 \Phi_{\delta}^1&=\frac{1}{\delta}\im \int  U_\delta\, G_0,&
 \Phi_{\delta}^2&=\frac{1}{\delta}\re \int U_\delta\, \left(|x|^2-\frac{N}{2}\right)G_0,&
 \Phi_{\delta}^3&=\frac{1}{\delta}\im \int U_\delta\, x\,G_0,\\
\Phi_{\delta}^4&=\frac{1}{\delta}\re \int U_\delta\, x\,G_0,&
\Phi_{\delta}^5&=\frac{1}{\delta}\im \int U_\delta\, \left(|x|^2-\frac{N}{2}\right)G_0.&&
\end{align*}
Denote by $\Gamma_{id}=(1,0,0,0)$ the identical transformation. Note that $\Phi_{\delta}(0,\Gamma_{id},G_0)=0$.  Then:
\begin{claim}
\label{C:implicit}
For small $\delta$, there exist $(\theta,\Gamma)$ close to $(0,\Gamma_{id})$ such that $$\Phi_{\delta}\left(\theta_{\delta},\Gamma_{\delta},\frac{1}{\delta} g_{\delta}\right)=0,$$
where $g_\delta$ is the initial condition of the maximizer $v_{\delta}$ defined in step 1.
\end{claim}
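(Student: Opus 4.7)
The plan is to apply the implicit function theorem to the family $\Phi_\delta$, regarded as depending smoothly on the parameter $\delta$. I first extend the construction to $\delta=0$ by replacing $\tilde u_\delta(t_0,\cdot)$ with the linear flow $\delta\,e^{it_0\Delta/2}f$; the nonlinear correction is $O(\delta^{1+4/N})$ in $L^2$ by Claim \ref{C:first_est}, so $\Phi_\delta$ together with its $(\theta_0,\Gamma_0)$-derivatives extends continuously down to $\delta=0$. At the reference point $(\theta_0,\Gamma_0,f,\delta)=(0,\Gamma_{id},G_0,0)$ one has $U_\delta\equiv 0$, hence $\Phi_0(0,\Gamma_{id},G_0)=0$. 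The aim is then to show that the partial Jacobian $D_{(\theta_0,\Gamma_0)}\Phi_0$ at this reference point is invertible, in which case the IFT supplies, for $\delta$ small and $f$ close to $G_0$, unique small $(\theta_\delta(f),\Gamma_\delta(f))$ with $\Phi_\delta(\theta_\delta(f),\Gamma_\delta(f),f)=0$. Plugging in $f=\delta^{-1}g_\delta$, which satisfies $\|\delta^{-1}g_\delta-G_0\|_{L^2}\to 0$ by Step 1, yields the claim.

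The main work is the Jacobian computation. Differentiating $(\theta_0,\Gamma_0)\mapsto e^{i\theta_0}\Gamma_0(G)_{\restriction t=0}$ at $(0,\Gamma_{id})$, and using $\nabla G_0=-xG_0$, $\Delta G_0=(|x|^2-N)G_0$ together with the scaling identity $\partial_\rho[\rho^{N/2}G_0(\rho x)]|_{\rho=1}=(N/2-|x|^2)G_0$, I find that the infinitesimal variations of $U_\delta/\delta$ in the parameters $(\theta_0,\rho_0,t_0,\xi_0,x_0)$ are, respectively,
$$
-iG_0,\qquad (|x|^2-N/2)G_0,\qquad -\tfrac{i}{2}(|x|^2-N)G_0,\qquad -ixG_0,\qquad xG_0.
$$
Pairing these directions in the appropriate real or imaginary part against the Hermite weights $G_0,(|x|^2-N/2)G_0,xG_0,xG_0,(|x|^2-N/2)G_0$ appearing in $\Phi^1,\dots,\Phi^5$, and using the elementary moments $\int G_0^2=1$, $\int|x|^2G_0^2=N/2$, $\int|x|^4G_0^2=N(N+2)/4$ together with the vanishing of odd moments, the Jacobian decouples into independent blocks: a $2\times 2$ block on $(\theta_0,t_0)\leftrightarrow(\Phi^1,\Phi^5)$, a scalar block on $\rho_0\leftrightarrow\Phi^2$, and two $N\times N$ diagonal blocks on $\xi_0\leftrightarrow\Phi^3$ and $x_0\leftrightarrow\Phi^4$. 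Each block is upper-triangular with nonzero diagonal (a brief check gives diagonal entries $-1,-N/4,N/2,-\tfrac12 I_N,\tfrac12 I_N$), so invertibility is immediate.

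The main potential obstacle is ensuring the uniform-in-$\delta$ continuity of $\Phi_\delta$ and its $(\theta_0,\Gamma_0)$-derivatives on a fixed neighborhood of the reference point. This reduces to differentiating the Duhamel representation of $\tilde u_\delta$ in the parameters $(t_0,\rho_0,\xi_0,x_0)$ and bounding the derivatives in $L^2$; since the nonlinearity produces an $O(\delta^{1+4/N})$ perturbation and its differentials remain controlled by standard Strichartz estimates at small mass, this is routine given the small-data Cauchy theory of \cite{CaWe90} already invoked in Section \ref{S:maximizer}. With the continuity granted, the classical IFT delivers the required $(\theta_\delta,\Gamma_\delta)$ close to $(0,\Gamma_{id})$ and completes the proof.
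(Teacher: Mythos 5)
Your proof is correct and follows essentially the same route as the paper's own Appendix~\ref{A:implicit}: apply the implicit function theorem at $(\theta_0,\Gamma_0,f)=(0,\Gamma_{id},G_0)$, compute the partial Jacobian $D_{(\theta_0,\Gamma_0)}\Phi_\delta$ using $\nabla G_0=-xG_0$, $\Delta G_0=(|x|^2-N)G_0$ and the Gaussian moments $\int G_0^2=1$, $\int |x|^2G_0^2=N/2$, $\int|x|^4G_0^2=N(N+2)/4$, and observe it is invertible uniformly for small $\delta$. Your diagonal entries $-1,-N/4,N/2,-\tfrac12 I_N,\tfrac12 I_N$ and the block upper-triangular structure agree with the paper's explicit matrices; the only minor presentational difference is that you pass to the $\delta=0$ limit (replacing the nonlinear solution by the linear flow and estimating the $O(\delta^{1+4/N})$ correction via Claim~\ref{C:first_est}) before invoking the IFT, whereas the paper keeps $\delta>0$ and absorbs the nonlinear contribution into $O(\delta^{4})$ (resp.\ $O(\delta^{2})$) remainders in the Jacobian.
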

We refer to Appendix \ref{A:implicit} for the proof of Claim \ref{C:implicit} which is based on a standard application of the implicit function theorem.

Let ${u}_{\delta}$ be the solution of \eqref{CP} with initial condition
\begin{equation*}
{f}_{\delta}=e^{i\theta_\delta}\Gamma_\delta\left(v_{\delta}\right)_{\restriction t=0}.
\end{equation*}
Then by \eqref{v0n_to_G},
\begin{equation}
\label{u0n_to_G0}
 \lim_{\delta\to \infty} \delta^{-1}\left\|{f}_{\delta}-\delta G_0\right\|_{L^2}=0.
\end{equation}
Furthermore, from the invariance of the $L^2$ and $L^{2+\frac{4}{N}}_{t,x}$ norms by the transformations of the equation, ${u}_{\delta}$ satisfies the maximizer equations \eqref{E:maximizer}.

The fact that $\Phi_{\delta}\left(\theta_{\delta},\Gamma_{\delta},\delta^{-1}g_{\delta}\right)=0$ means that ${f}_{\delta}$ satisfies the orthogonality conditions
\begin{gather}
\label{ortho'}
\im \int \left({f}_{\delta}-\delta G_0\right)\,G_0=0,\quad \int \left({f}_{\delta}-\delta G_0\right) x\,G_0=0,\\
\label{ortho''}
\int \left({f}_{\delta}-\delta G_0\right)\left(|x|^2-\frac{N}{2}\right)\,G_0=0.
\end{gather}
Let $\alpha_{\delta}=\re \int {f}_{\delta} G_{0}$ and $\varphi_{\delta}={f}_{\delta}-\alpha_{\delta} G_{0}$, so that $\re \int \varphi_{\delta}G_0=0$. By \eqref{ortho'} and \eqref{ortho''}, $\varphi_{\delta}$ satisfies the orthogonality conditions \eqref{ortho_intro}. By \eqref{u0n_to_G0}, $\lim_{\delta\to 0} \alpha_{\delta}/\delta=1$, which concludes Step 2.

\smallskip

\noindent\emph{Step 3. Proof of the estimate \eqref{cond_phi}.} In
this step we conclude the proof of Proposition \ref{P:close_G} using
the coercivity of $Q$ (Theorem \ref{T:ortho}). To simplify
notations, we will omit the index $\delta$ and write $u$, $f$,
$\varphi$ and $\alpha$ instead of $u_{\delta}$, $f_{\delta}$,
$\varphi_{\delta}$ and $\alpha_{\delta}$. All estimates stated hold
for small $\delta>0$.

By Claim \ref{C:first_est},
\begin{multline*}
\left|\iint |u|^{2+\frac{4}{N}}\,dt\,dx-\iint \left| e^{i\frac{t}{2}\Delta}
f\,dt\,dx\right|^{2+\frac{4}{N}}\right|\\
\leq C \delta^{1+\frac{4}{N}} \left\|u-e^{i\frac{t}{2}\Delta}
f\right\|_{L^{2+\frac{4}{N}}_{t,x}}\leq C\delta^{2+\frac{8}{N}}.
\end{multline*}
Recalling that $\frac{1}{\alpha}f=G_0(x)+\frac{1}{\alpha}\varphi$
and using the expansion 
of the Strichartz norm, we obtain
\begin{multline*}
\iint \left|u\right|^{2+\frac{4}{N}} \,dt\,dx = \alpha^{2+\frac{4}{N}}
\iint \left|e^{i\frac{t}{2}\Delta}\frac{1}{\alpha}f\right|^{2+\frac{4}{N}}\,dt\,dx
+\OOO\left(\delta^{2+\frac{8}{N}}\right)\\
= C_S\left(\int \left|f\right|^2\,dx \right)^{1+\frac{2}{N}}-
\alpha^{2+\frac{4}{N}}Q\left(\frac{1}{\alpha}\varphi\right)+
\alpha^{2+\frac{4}{N}}\OOO\left(\frac{1}{\alpha^3}\left\|\varphi\right\|_{L^2}^3\right)
+\OOO\left(\delta^{2+\frac{8}{N}}\right)\\
= C_S\delta^{2+\frac{4}{N}}-\alpha^{\frac 4N} Q \left(\varphi\right)
+\OOO\left(\alpha^{\frac{4}{N}-1} \left\|\varphi\right\|_{L^2}^3
\right) +\OOO\left(\delta^{2+\frac{8}{N}}\right).
\end{multline*}
Using that $u$ satisfies \eqref{E:maximizer}, we get
$$
\iint \left|u\right|^{2+\frac{4}{N}}\,dt\,dx = I(\delta)= C_S \delta^{2+\frac{4}{N}}+
\OOO\left(\delta^{2+\frac{8}{N}}\right),
$$
and thus,
$$
\alpha^{\frac 4N} Q\left(\varphi\right)=\OOO\left(\alpha^{
\frac{4}{N}-1}\|\varphi\|_{L^2}^3\right)+\OOO\lf(\delta^{2+
\frac{8}{N}} \rg) =\OOO\left(\alpha^{\frac{4}{N}}\|\varphi\|_{L^2}^2
\frac{\|\varphi\|_{L^2}}{\alpha}\right)+\OOO\lf(\delta^{2+\frac{8}{N}}\rg).
$$
By Theorem \ref{T:ortho}, $\|\varphi\|_{L^2}^2\lesssim
Q\left(\varphi\right)$, and thus, using that
$\frac{1}{\alpha}\lf\|\varphi\rg\|_{L^2}\to 0$ as $\delta\to 0$,
$$
\alpha^{\frac 4N}\|\varphi\|_{L^2}^2=\OOO\lf(\delta^{2+\frac{8}{N}}\rg),
$$
which shows \eqref{cond_phi}.
\end{proof}

\subsection{Proof of the estimate on the maximum}
The idea of the proof of Proposition \ref{P:estimate} is to compare $I(\delta)$ with the $L^{2+\frac{4}{N}}$ norm of $H_{\delta}$, the solution to the nonlinear equation \eqref{CP} with
the Gaussian initial data $\delta G_0$.
We have
$$ \iint \left|u_{\delta}\right|^{2+\frac{4}{N}}\,dt\,dx=I(\delta)\geq \iint \left|H_{\delta}\right|^{2+\frac{4}{N}}\,dt\,dx.$$

The global $L^{2+\frac{4}{N}}$ of $H_{\delta}$ may be estimated as follows:
\begin{lemma}
\label{L:gaussian_devt}
Let
\begin{equation}
 \label{def_c*}
D_N=-\left(2+\frac{4}{N}\right)\im \iint |G(t)|^{\frac{4}{N}} \overline{G}(t) \int_0^te^{i\frac{(t-s)}{2}\Delta}\left(|G(s)|^{\frac{4}{N}}G(s)\right)ds\,dt\,dx.
\end{equation}
Then for small $\delta>0$,
\begin{equation}
\label{gaussian_devt}
\iint |H_{\delta}|^{2+\frac{4}{N}}\,dt\,dx=\delta^{2+\frac 4N}\iint |G|^{2+\frac{4}{N}}\,dt\,dx\\
+\gamma D_N\delta^{2+\frac 8N}
+\OOO\left(\delta^{2+\frac{12}{N}}\right).
\end{equation}
\end{lemma}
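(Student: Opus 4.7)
The plan is to expand $H_\delta$ about the linear Gaussian evolution $\delta G$ via Duhamel, then Taylor-expand $|H_\delta|^{2+\frac 4N}$ pointwise and integrate, identifying the coefficient of $\delta^{2+\frac 8N}$ with $\gamma D_N$. First I would write
\[
H_\delta(t)=\delta G(t)+R_\delta(t),\qquad R_\delta(t)=i\gamma\int_0^t e^{i(t-s)\Delta/2}\bigl(|H_\delta|^{\frac 4N}H_\delta\bigr)(s)\,ds,
\]
so that the small-data Cauchy theory together with Claim \ref{C:first_est} yields $\|H_\delta\|_{L^{2+\frac 4N}_{t,x}}\lesssim\delta$ and $\|R_\delta\|_{L^{2+\frac 4N}_{t,x}}\lesssim\delta^{1+\frac 4N}$.

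Next I would isolate the first model correction by defining
\[
\Psi(t)=\int_0^t e^{i(t-s)\Delta/2}\bigl(|G(s)|^{\frac 4N}G(s)\bigr)\,ds
\]
and $S_\delta=R_\delta-i\gamma\delta^{1+\frac 4N}\Psi$, so that $S_\delta$ is the Duhamel integral of $|H_\delta|^{\frac 4N}H_\delta-|\delta G|^{\frac 4N}(\delta G)$. The inhomogeneous Strichartz estimate, combined with the Lipschitz-type bound $\bigl||z|^{\frac 4N}z-|w|^{\frac 4N}w\bigr|\lesssim(|z|^{\frac 4N}+|w|^{\frac 4N})|z-w|$ and Hölder in space-time with exponent $(N+2)/2$ on the $|\cdot|^{\frac 4N}$ factor, gives
\[
\|S_\delta\|_{L^{2+\frac 4N}_{t,x}}\lesssim\bigl(\|H_\delta\|_{L^{2+\frac 4N}_{t,x}}^{\frac 4N}+\delta^{\frac 4N}\|G\|_{L^{2+\frac 4N}_{t,x}}^{\frac 4N}\bigr)\|R_\delta\|_{L^{2+\frac 4N}_{t,x}}\lesssim\delta^{1+\frac 8N}.
\]

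Since $p:=2+\frac 4N\in\{4,6\}$ is an even integer in dimensions $N=1,2$, the identity $|H_\delta|^p=|\delta G+R_\delta|^p$ is a finite polynomial in $R_\delta,\overline{R_\delta}$; separating terms by degree in $R_\delta$,
\[
|H_\delta|^p=\delta^p|G|^p+p\,\delta^{p-1}|G|^{p-2}\re\bigl(\bar G R_\delta\bigr)+T_\delta,
\]
where $T_\delta$ is a finite sum of terms bounded by $\delta^{p-k}|G|^{p-k}|R_\delta|^k$ with $k\geq 2$. Hölder with conjugate exponents $p/(p-k)$ and $p/k$ gives $\iint|T_\delta|\,dt\,dx\lesssim\sum_{k\geq 2}\delta^{p-k}\|G\|_{L^p_{t,x}}^{p-k}\|R_\delta\|_{L^p_{t,x}}^k\lesssim\delta^{p+\frac 8N}=\delta^{2+\frac{12}{N}}$. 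Substituting $R_\delta=i\gamma\delta^{1+\frac 4N}\Psi+S_\delta$ in the cross term, the $\Psi$-contribution equals
\[
p\gamma\delta^{p+\frac 4N}\iint|G|^{p-2}\re\bigl(i\bar G\Psi\bigr)\,dt\,dx=-p\gamma\delta^{2+\frac 8N}\im\iint|G|^{p-2}\bar G\Psi\,dt\,dx=\gamma D_N\delta^{2+\frac 8N}
\]
by the defining formula \eqref{def_c*}, while the $S_\delta$-contribution is bounded by $\delta^{p-1}\|G\|_{L^p_{t,x}}^{p-1}\|S_\delta\|_{L^p_{t,x}}=O(\delta^{2+\frac{12}{N}})$ via Hölder with exponents $p/(p-1)$ and $p$. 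Combining the three pieces gives \eqref{gaussian_devt}.

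The only nontrivial input is the improved Strichartz bound $\|S_\delta\|_{L^p_{t,x}}=O(\delta^{1+\frac 8N})$, obtained by one further iteration of the small-data fixed-point argument and exploiting the smoothness of the nonlinearity in dimensions $N=1,2$; everything else is a finite Taylor expansion together with Hölder bookkeeping.
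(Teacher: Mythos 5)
Your proof is correct and follows essentially the same route as the paper: write $H_\delta=\delta G+R_\delta$, isolate the first Duhamel iterate $i\gamma\delta^{1+4/N}\Psi$, control the residual $S_\delta$ to $O(\delta^{1+8/N})$ by inhomogeneous Strichartz plus the elementary Lipschitz bound on $z\mapsto|z|^{4/N}z$, and then expand the $L^p_{t,x}$ norm to extract the coefficient of $\delta^{2+8/N}$. The only genuine difference is the final expansion step: the paper uses a general Taylor-type estimate (equation \eqref{A+B}) for $\iint|A+B|^{2+4/N}$ that works for all $N\geq 1$ and is applied to the approximate solution $v_\delta=\delta G+\gamma\delta^{1+4/N}r$, whereas you exploit the fact that $p=2+\tfrac4N\in\{4,6\}$ is an even integer in dimensions $1$ and $2$ so that $|H_\delta|^p$ is a finite polynomial in $R_\delta,\overline{R_\delta}$. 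This buys you an exact algebraic identity (no remainder term to justify), but it ties the argument to $N\in\{1,2\}$, which is permissible here because the lemma is only stated and used in that range; the paper's version has the same complexity but stays dimension-agnostic, consistent with the authors' general treatment elsewhere. The Strichartz and Hölder bookkeeping in both approaches is identical, and your identification of the coefficient with $\gamma D_N$ via $\re(i\bar G\Psi)=-\im(\bar G\Psi)$ is exactly the calculation implicit in the paper's use of $r=i\Psi$.
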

The exact value of the constant $D_N$ will be computed in Appendix \ref{A:const1} (dimension $1$) and Appendix \ref{A:const2} (dimension $2$) .
\begin{proof}[Proof of Lemma \ref{L:gaussian_devt}]
Since $G$ is the linear evolution of $G_0$, we have
$$
H_{\delta}=\delta G + i \gamma \int_0^t e^{i\frac{(t-s)}{2}\Delta}|H_{\delta}(s)|^{\frac{4}{N}}H_{\delta}(s)ds.
$$
We approximate $H_\delta$ by $v_\delta$:
$$
v_{\delta}(t,x)=\delta \left(G(t,x)+ \gamma \delta^{\frac4{N}}
r(t,x)\right),
$$
where
\begin{equation}
\label{def_r}
r(t,x)=i\int_0^t e^{i\frac{(t-s)}{2}\Delta} |G(s)|^{\frac4{N}} G(s) \,
ds,
\end{equation}
in other words, $v_\delta$ solves
$$
i \partial_tv_{\delta} + \frac12 \Delta v_{\delta} + \gamma
\delta^{\frac4{N}+1}|G|^{\frac4{N}} G = 0, \quad v_{\delta}(0,x)=\delta
G_0(x),
$$
and $r$ solves
$$
i \partial_t r + \frac12 \Delta r + |G|^{\frac4{N}} G = 0, \quad r(0,x)=0.
$$
Since by Claim \ref{C:first_est}
\begin{multline*}
\left\||H_\delta|^{\frac 4N} H_\delta - \delta^{\frac4{N}+1}
|G|^{\frac4{N}} G \right\|_{L^{\frac{2(N+2)}{N+4}}_{t,x}}\\
\leq C
\left\|H_{\delta}-\delta G\right\|_{L^{2+\frac{4}{N}}_{t,x}}\left(\left\|H_\delta\right\|^{\frac 4N}_{L^{2+\frac{4}{N}}_{t,x}}+\left\|\delta G\right\|_{L^{2+\frac{4}{N}}_{t,x}}^{\frac 4N}\right)
\leq
C\delta^{\frac8{N}+1},
\end{multline*}
by Strichartz estimates, 
we have
$$
\|{H_\delta} - v_{\delta} \|_{L^{2+\frac{4}{N}}_{t,x}} \leq C \delta^{1+\frac8{N}},
$$
and thus,
$$
\bigg|\iint |H_{\delta}|^{2+\frac{4}{N}} \, dt\,dx- \iint
|v_{\delta}|^{2+\frac{4}{N}} \, dt\,dx \bigg| \lesssim
\|H_\delta - v_\delta\|_{L^{2+\frac{4}{N}}_{t,x}} \, \|\delta
G_0\|^{1+\frac4{N}}_{L^{2}} \lesssim \delta^{2+\frac{12}{N}},
$$
which is exactly the power of higher order terms in \eqref{gaussian_devt}. It remains to estimate
$\iint |v_{\delta}|^{2+\frac{4}{N}}$. Note that if $A$ and $B$ are functions of space and time,
\begin{multline}
\label{A+B}
\iint |A+B|^{2+\frac4N}\\
=\iint |A|^{2+\frac{4}{N}}+\left(2+\frac 4N\right)\re \iint |A|^{\frac 4N}A\overline{B}+\OOO\left(\iint |A|^{\frac{4}{N}}|B|^2+|B|^{2+\frac 4N}\right).
\end{multline}
By \eqref{A+B} and the definition of $v_{\delta}$ we get,
\begin{multline*}
\iint |v_{\delta}|^{2+\frac{4}{N}}\,dt\,dx=\delta^{2+\frac 4N}\iint |G|^{2+\frac{4}{N}}\,dt\,dx\\
+\delta^{2+\frac 8N}\left(2+\frac 4N\right)\re \iint \left|G\right|^{\frac{4}{N}}\overline{G}r\,dt\,dx
+\OOO\left(\delta^{2+\frac{12}{N}}\right),
\end{multline*}
which concludes the proof of Lemma \ref{L:gaussian_devt} in view of the definition \eqref{def_c*} of $D_N$.
\end{proof}
We next prove Proposition \ref{P:estimate}. Let $u_{\delta}$,
$f_{\delta}$, $\varphi_{\delta}$ and $\alpha_{\delta}$ be as in
Proposition \ref{P:close_G}. We have
\begin{equation}
 \label{expr_un}
u_{\delta}= \underbrace{e^{i\frac{t}{2}\Delta}\left(\alpha_{\delta}
G_0+\varphi_{\delta}\right)}_A+ \underbrace{i\gamma
\int_0^te^{i\frac{(t-s)}{2}\Delta}\left(|u_{\delta}(s)|^{\frac
4N}u_{\delta}(s)\right)ds}_B.
\end{equation}
By \eqref{cond_phi} and Strichartz estimate \eqref{strichartz},
$$
\left\|e^{i\frac t2\Delta}\varphi_\delta \right\|_{L^{2
+\frac{4}{N}}}\leq C\|\varphi_{\delta}\|_{L^2}\leq C\delta^{1+\frac{2}{N}}.
$$
Expanding the $B$ term in \eqref{expr_un} and applying Strichartz
estimates again to bound the terms in $\varphi_{\delta}$, we get 
(the $\OOO$'s are estimated in the space $L^{2+\frac{4}{N}}_{t,x}$).
\begin{multline*}
B=i\gamma\int_0^te^{i\frac{(t-s)}{2}\Delta}\left(|u_{\delta}(s)|^{\frac 4N}u_{\delta}(s)\right)ds\\
=i\gamma\int_0^te^{i\frac{(t-s)}{2}\Delta}\left[\left|\alpha_{\delta} G(s)+e^{i\frac s2 \Delta}\varphi_{\delta} \right|^{\frac 4N}\left(\alpha_{\delta} G(s)+e^{i\frac s2 \Delta}\varphi_{\delta}\right)  \right]ds+\OOO\left(\delta^{1+\frac{8}{N}}\right)\\
=i\gamma\alpha_{\delta}^{1+\frac 4N}\int_0^te^{i\frac{(t-s)}{2}\Delta}\left(\left|G(s)\right|^{\frac 4N} G(s)  \right)ds+\OOO\left(\delta^{\frac 4N}\|\varphi_{\delta}\|_{L^2}+\delta^{1+\frac{8}{N}}\right).
\end{multline*}
And thus, by \eqref{A+B} and \eqref{expr_un},
\begin{multline}
\label{expr_iint}
\iint |u_{\delta}|^{2+\frac{4}{N}}\,dt\,dx
=
\iint \left|\alpha_{\delta}
G+e^{i\frac t2\Delta}\varphi_{\delta}\right|^{2+\frac{4}{N}}\,dt\,dx
\\-\left(2+\frac 4N\right)\gamma \alpha_{\delta}^{2+\frac{8}{N}}\im \iint |G(t)|^{\frac{4}{N}} \overline{G}(t) \int_0^te^{i\frac{(t-s)}{2}\Delta}\left(|G(s)|^{\frac{4}{N}}G(s)\right)ds\,dt\,dx
\\+\OOO\left(\delta^{1+\frac{8}{N}}\|\varphi_{\delta}\|_{L^2}\right)+\OOO\left(\delta^{2+\frac{12}{N}}\right).
\end{multline}
By the equation \eqref{devt_Q}
\begin{multline*}
\iint \left|\alpha_{\delta}
G+e^{i\frac t2\Delta}\varphi_{\delta}\right|^{2+\frac{4}{N}}\,dt\,dx\\
=\alpha_{\delta}^{2+\frac{4}{N}}\left[C_S\left\|G_0+\frac{1}{\alpha_{\delta}}\varphi_{\delta}   \right\|^{2+\frac{4}{N}}_{L^2}-Q\left(\frac{1}{\alpha_{\delta}}\varphi_{\delta}\right)+\OOO\left(\frac{1}{\alpha_{\delta}^3}\left\|\varphi_{\delta}\right\|_{L^2}^3\right)\right].
\end{multline*}
By \eqref{cond_phi} and \eqref{expr_iint}, using
that
\begin{equation}
\label{compare_alpha_delta}
\left\|\alpha_{\delta} G_0+\varphi_{\delta}\right\|_{L^2}^2=\delta^2=
\alpha_{\delta}^2+\|\varphi_{\delta}\|^2_{L^2}=\alpha_{\delta}^2+\OOO\left(\delta^{2+\frac 4N}\right),
\end{equation}
we get, in view of the definition \eqref{def_c*} of $D_N$,
\begin{multline}
\label{est_I_interm}
\iint |u_{\delta}|^{2+\frac{4}{N}}\,dt\,dx\\
=C_S\delta^{2+\frac{4}{N}}-\alpha_{\delta}^{\frac{4}{N}}Q(\varphi_{\delta})+\gamma D_N
\alpha_{\delta}^{2+\frac{8}{N}}+\OOO\left(\delta^{1+\frac{8}{N}}\left\|\varphi_{\delta}\right\|_{L^2}\right)+\OOO\left(\delta^{2+\frac{12}{N}}\right).
\end{multline}
By Lemma \ref{L:gaussian_devt},
$$ \iint |u_{\delta}|^{2+\frac{4}{N}}\,dt\,dx\geq C_S\delta^{2+\frac{4}{N}}+\gamma D_N\delta^{2+\frac{8}{N}}+\OOO\left(\delta^{2+\frac{12}{N}}\right).$$
Combining with \eqref{est_I_interm}, we get
\begin{multline*}
C_S\delta^{2+\frac{4}{N}}-\alpha_{\delta}^{\frac{4}{N}}Q(\varphi_{\delta})+\gamma D_N
\alpha_{\delta}^{2+\frac{8}{N}}+\OOO(\delta^{2+\frac{12}{N}})+\OOO\left(\delta^{1+\frac{8}{N}}\left\|\varphi_{\delta}\right\|_{L^2}\right)\\
\geq C_S\delta^{2+\frac{4}{N}}+\gamma D_N\delta^{2+\frac{8}{N}}.
\end{multline*}
Using that by \eqref{compare_alpha_delta}
$$\left|\delta^{2+\frac{8}{N}}-\alpha_{\delta}^{2+\frac{8}{N}}\right|=\OOO\left(\delta^{2+\frac{12}{N}}\right),$$ 
this simplifies to
$$   \alpha_{\delta}^{\frac 4N}Q(\varphi_{\delta})=\OOO\left(\delta^{2+\frac{12}{N}}\right)+\OOO\left(\delta^{1+\frac{8}{N}}\|\varphi_{\delta}\|_{L^2}\right).$$
Let $X=\|\varphi_{\delta}\|_{L^2}\delta^{-1-\frac{4}{N}}$. By the preceding estimate and Theorem \ref{T:ortho}, there exists a constant $C>0$ independent of $\delta$ such that
$X^2\leq C(1+X)$. This implies that $X$ is bounded independently of $\delta$, i.e.
\begin{equation}
\label{new_varphi}
\|\varphi_{\delta}\|_{L^2}=\OOO\left(\delta^{1+\frac{4}{N}}\right).
\end{equation}
By \eqref{est_I_interm} again,
\begin{equation}
 \label{estimate_I}
I(\delta)=\iint\left|u_{\delta}\right|^{2+\frac{4}{N}}=C_S\delta^{2+\frac{4}{N}}+\gamma D_N
\delta^{2+\frac{8}{N}}+\OOO\left(\delta^{2+\frac{12}{N}}\right).
\end{equation}
The proof is complete, except for the computation of $D_N$ which is given in appendices \ref{A:const1} and \ref{A:const2}. Note that as announced in Remark \ref{R:cond_phi}, the estimate \eqref{new_varphi} improves the preceding estimate \eqref{cond_phi} on $\varphi_{\delta}$.
\qed

\section{Uniqueness}
\label{S:uniqueness}
In this section we show the uniqueness part of Theorem \ref{T:main}. We assume again
$$ N\in \{1,2\}.$$
By Proposition \ref{P:maximizer}, there exists, for small $\delta>0$, a maximizer for $I(\delta)$, i.e. a solution $u_{\delta}$ of \eqref{CP} such that  
\begin{equation}
\label{H_maximizer}
\|f_{\delta}\|_{L^2}=\delta,\quad \iint |u_{\delta}|^{2+\frac{4}{N}}=I(\delta)
\end{equation}
(as usual $f_{\delta}(x)=u_{\delta}(0,x)$).
By Proposition \ref{P:close_G} and Remark \ref{R:cond_phi}, assuming again that $\delta$ is small, any maximizer for $I(\delta)$ satisfies, after transformation, the following properties:
\begin{equation}
\label{H_close_to_G}
f_{\delta} = \alpha_{\delta} G_0+\varphi_{\delta},
\end{equation}
where $\varphi_{\delta}\in L^2(\RR^N)$ and $\alpha_{\delta}>0$ are such that
\begin{gather}
\label{H_ortho}
\int \varphi\, G_0=\int \varphi\, |x|^2 G_0=0, \quad \int \varphi\,
x G_0=0_{\RR^N},\\
\label{H_estim}
 \|\varphi_{\delta}\|_{L^2}\leq C\delta^{1+\frac 4N},\quad \alpha_{\delta}>0\quad \text{and}\quad
\left|\alpha_{\delta}-\delta\right|\leq C\delta^{1+\frac 4N}.
\end{gather} 
We must show that if $C>0$, there exists $\delta_0>0$ such that if $\delta\in (0,\delta_0)$, there is at most one solution $u_{\delta}$ of \eqref{CP} satisfying \eqref{H_maximizer}, \eqref{H_close_to_G}, \eqref{H_ortho} and \eqref{H_estim}.

Let us fix a small $\delta>0$ and a maximizer $u_{\delta}$ satisfying \eqref{H_maximizer}, \eqref{H_close_to_G}, \eqref{H_ortho} and \eqref{H_estim}. The strategy of the proof is to expand $\int |v|^{2+\frac{4}{n}}$, where $v$ is a solution of \eqref{CP} which is close to $u_{\delta}$. In \S \ref{SS:linearization} we expand $v$ and $\int |v|^{2+\frac{4}{n}}$ at first order, in \S \ref{SS:second} we obtain a second order expansion involving the quadratic form $Q$. Assuming that $v$ is another maximizer, the conclusion will follow from Theorem \ref{T:ortho}.

\subsection{Linearization}
\label{SS:linearization}
\begin{lemma}
\label{L:linearization}
There exists a linear operator $L_{\delta}:L^{2+\frac{4}{N}}_{t,x}\to L^{2+\frac 4N}_{t,x}$ such that
\begin{equation}
\label{estim_L_delta}
\forall h\in L^{2+\frac{4}{N}}_{t,x},\quad \|(L_{\delta} -1)h\|_{L^{2+\frac{4}{N}}_{t,x}}\leq C\delta^{\frac{4}{N}} \|h\|_{L^{2+\frac{4}{N}}_{t,x}},
\end{equation} 
with the following property:
if $v$ is a solution of \eqref{CP} with the initial condition $f_{\delta}+\psi$, where
\begin{equation}
\label{small_psi}
 \|\psi\|_{L^2}\leq \delta,
\end{equation} 
then
\begin{equation}
\label{estim_u_v_psi}
\left\|v-u_{\delta}-L_{\delta}\left(e^{i\frac{t}{2}\Delta}\psi\right)\right\|_{L^{2+\frac 4N}_{t,x}}\leq C\delta^{\frac{4}{N}-1}\|\psi\|_{L^2}^2.
\end{equation} 
\end{lemma}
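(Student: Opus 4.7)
The plan is to construct $L_\delta=(I-\TTT)^{-1}$, where $\TTT$ is the $\RR$-linear operator on $L^{2+\frac 4N}_{t,x}$ obtained by linearizing the nonlinearity $F(z)=|z|^{\frac 4N}z$ around the fixed maximizer $u_\delta$. Since $N\in\{1,2\}$, $F$ is a polynomial in $(z,\bar z)$ (degree $5$ for $N=1$, degree $3$ for $N=2$), so the first-order Taylor expansion
$$ F(u_\delta+w)-F(u_\delta)=DF(u_\delta)\cdot w+R(u_\delta,w) $$
holds exactly with the pointwise remainder bound $|R(u_\delta,w)|\lesssim (|u_\delta|+|w|)^{\frac 4N-1}|w|^2$. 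Accordingly, I set
$$ \TTT h(t):=i\gamma\int_0^t e^{i\frac{t-s}{2}\Delta}\bigl(DF(u_\delta)(s)\cdot h(s)\bigr)ds. $$

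The first step is to estimate $\TTT$ on $L^{2+\frac 4N}_{t,x}$. By the inhomogeneous Strichartz estimate combined with a H\"older pairing with exponents $\tfrac{4/N}{2+4/N}+\tfrac{1}{2+4/N}=\tfrac{N+4}{2(N+2)}$,
$$ \|\TTT h\|_{L^{2+\frac 4N}_{t,x}}\leq C\,\|u_\delta\|_{L^{2+\frac 4N}_{t,x}}^{\frac 4N}\|h\|_{L^{2+\frac 4N}_{t,x}}\leq C\delta^{\frac 4N}\|h\|_{L^{2+\frac 4N}_{t,x}}, $$
using $\|u_\delta\|_{L^{2+\frac 4N}_{t,x}}\leq C\delta$, which follows from \eqref{H_maximizer} and Lemma~\ref{L:first_est}. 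For $\delta$ small enough, $\|\TTT\|_{\mathrm{op}}\leq\tfrac 12$, so the Neumann series $L_\delta:=\sum_{k\geq 0}\TTT^k$ converges in operator norm, defines $(I-\TTT)^{-1}$, and satisfies $\|L_\delta-I\|_{\mathrm{op}}\leq 2\|\TTT\|_{\mathrm{op}}\leq C\delta^{\frac 4N}$; this is \eqref{estim_L_delta}.

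The second step is the perturbation estimate. Setting $w=v-u_\delta$, the assumption \eqref{small_psi} gives $\|f_\delta+\psi\|_{L^2}\leq 2\delta$, which stays in the small-data regime, so Lemma~\ref{L:longperturb} applied with $\tilde u=u_\delta$ and error $e=0$ provides the a priori bound $\|w\|_{L^{2+\frac 4N}_{t,x}}\leq C\|\psi\|_{L^2}$. Subtracting the Duhamel formulas for $v$ and $u_\delta$ and substituting the Taylor expansion of $F$,
$$ w=e^{i\frac t2\Delta}\psi+\TTT w+\MMM(w),\qquad \MMM(w)(t):=i\gamma\int_0^t e^{i\frac{t-s}{2}\Delta}R(u_\delta,w)(s)\,ds, $$
so applying $L_\delta$ to $(I-\TTT)w=e^{i\frac t2\Delta}\psi+\MMM(w)$ yields the identity $w-L_\delta(e^{i\frac t2\Delta}\psi)=L_\delta(\MMM(w))$. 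Strichartz together with the pointwise bound on $R$ and H\"older with the same exponents as before give
$$ \|\MMM(w)\|_{L^{2+\frac 4N}_{t,x}}\leq C\bigl(\|u_\delta\|_{L^{2+\frac 4N}_{t,x}}+\|w\|_{L^{2+\frac 4N}_{t,x}}\bigr)^{\frac 4N-1}\|w\|_{L^{2+\frac 4N}_{t,x}}^2\leq C\delta^{\frac 4N-1}\|\psi\|_{L^2}^2, $$
and the operator bound $\|L_\delta\|_{\mathrm{op}}\leq 1+C\delta^{\frac 4N}$ turns this into \eqref{estim_u_v_psi}.

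The only point requiring any care is the pointwise estimate on $R$, whose key feature is that it is \emph{quadratic} in $w$; this is precisely what allows the output to gain the factor $\|\psi\|_{L^2}^2$ rather than $\|\psi\|_{L^2}$ and is a clean consequence of the polynomial structure of $F$ available when $N\in\{1,2\}$. The rest of the argument is routine bookkeeping of Strichartz pairs plus the Neumann-series inversion of $I-\TTT$ viewed as a perturbation of the identity.
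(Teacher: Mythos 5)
Your proof is correct and follows the same strategy as the paper's: write the Duhamel difference equation for $w=v-u_\delta$, split the nonlinear increment $F(u_\delta+w)-F(u_\delta)$ into its $\RR$-linearization plus a quadratic remainder, invert $I-\TTT$ by Neumann series to define $L_\delta$, and close with Strichartz/H\"older plus the a priori bound $\|w\|_{L^{2+4/N}_{t,x}}\lesssim\|\psi\|_{L^2}$ from Lemma~\ref{L:longperturb}. The only cosmetic difference is that you make explicit the polynomial structure of $F$ for $N\in\{1,2\}$ to justify the pointwise remainder bound, where the paper states the bounds more tersely.
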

\begin{proof}
 Let $w=v-u_{\delta}$. Then by Lemma \ref{L:longperturb}, 
\begin{equation}
\label{bound_w_psi}
 \|w\|_{L^{2+\frac{4}{N}}_{t,x}}\leq C\|\psi\|_{L^2}.
\end{equation} 
Writing Duhamel's formula for $u_{\delta}$ and $v=u_\delta+w$, we get
\begin{equation*}
w=e^{i\frac{t}{2}\Delta}\psi+i\gamma \int_0^te^{i\frac{(t-s)}{2}\Delta}\left(|u_{\delta}(s)+w(s)|^{\frac{4}{N}}(u_\delta(s)+w(s))-|u_{\delta}(s)|^{\frac{4}{N}}u_{\delta}(s)\right)ds.
\end{equation*} 
Expanding $|u_{\delta}(s)+w(s)|^{\frac{4}{N}}(u_\delta(s)+w(s))$, one can write the preceding equality as
\begin{equation}
\label{expr_w}
w=e^{i\frac{t}{2}\Delta}\psi+\widetilde{L_{\delta}} w+\widetilde{R_{\delta}}(w),
\end{equation} 
where the linear operator $\widetilde{L_{\delta}}:L^{2+\frac{4}{N}}_{t,x}\to L^{2+\frac{4}{N}}_{t,x}$ satisfies
\begin{equation}
\label{tilde_L}
\left\|\widetilde{L_{\delta}}w\right\|_{L^{2+\frac{4}{N}}_{t,x}}\leq C\delta^{\frac{4}{N}}\left\|w\right\|_{L^{2+\frac{4}{N}}_{t,x}}, 
\end{equation} 
and $\widetilde{R_{\delta}}$ satisfies
\begin{equation}
\label{bound_tilde_R}
 \left\|\widetilde{R_{\delta}}(w)\right\|\leq C\left(\delta^{\frac 4N-1}\|w\|_{L^{2+\frac{4}{N}}_{t,x}}^2+\|w\|_{L^{2+\frac{4}{N}}_{t,x}}^{1+\frac{4}{N}}\right).
\end{equation} 
Letting for small $\delta$
$$ L_{\delta}=\left(1-\widetilde{L_{\delta}}\right)^{-1},$$
we obtain by \eqref{tilde_L} that $L_{\delta}$ satisfies \eqref{estim_L_delta}. The estimate \eqref{estim_u_v_psi} follows from \eqref{small_psi}, \eqref{bound_w_psi}, \eqref{expr_w} and \eqref{bound_tilde_R}.
\end{proof}
\begin{lemma}
\label{L:lagrange}
Let $L_\delta$ be as in Lemma \ref{L:linearization}. Then for small $\delta>0$,
\begin{equation}
\label{NL_lagrange}
\re \iint |u_{\delta}|^{\frac 4N}\overline{u_\delta} L_{\delta}\left(e^{i\frac{t}{2}\Delta}\psi\right)= \mu_{\delta}\re \int \overline{f_{\delta}}\psi,
\end{equation} 
where $\mu_{\delta}>0$, which depends only on $u_{\delta}$, satisfies
\begin{equation}
\label{estimate_mu}
\left|\mu_{\delta}-C_S\delta^{\frac 4N}\right|\leq C\delta^{\frac 8N}.
\end{equation} 
\end{lemma}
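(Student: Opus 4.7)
The identity \eqref{NL_lagrange} is the Euler--Lagrange condition for the constrained maximization problem solved by $u_\delta$. Viewing $L^2(\RR^N)$ as a real Hilbert space with inner product $\langle f,g\rangle=\re\int \overline{f}\,g$, the sphere $\|g\|_{L^2}=\delta$ is a codimension-one real submanifold whose tangent space at $f_\delta$ is $T_{f_\delta}=\{\psi:\re\int\overline{f_\delta}\psi=0\}$. Lemma \ref{L:linearization}, combined with the pointwise Taylor estimate $\big||a+b|^{2+\frac4N}-|a|^{2+\frac4N}-(2+\frac4N)\re(|a|^{\frac4N}\overline{a}b)\big|\lesssim|a|^{\frac4N}|b|^2+|b|^{2+\frac4N}$ and H\"older's inequality, yields the expansion
\[
\iint |v|^{2+\frac4N} - \iint |u_\delta|^{2+\frac4N} = \Big(2+\tfrac4N\Big)\,\Lambda_\delta(\psi) + O\!\big(\delta^{\frac4N}\|\psi\|_{L^2}^2\big),
\]
where $v$ solves \eqref{CP} with initial data $f_\delta+\psi$ and $\Lambda_\delta(\psi) := \re\iint |u_\delta|^{\frac4N}\overline{u_\delta}\, L_\delta(e^{i\frac{t}{2}\Delta}\psi)\,dt\,dx$. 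Setting $M_\delta(\psi):=\re\int\overline{f_\delta}\psi$, the aim is to show that $\Lambda_\delta=\mu_\delta M_\delta$ for a unique $\mu_\delta\in\RR$ satisfying $\mu_\delta=C_S\delta^{\frac4N}+O(\delta^{\frac8N})$.

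The first step is the proportionality $\Lambda_\delta|_{T_{f_\delta}}\equiv 0$. Given $\psi\in T_{f_\delta}$, I would set $g(\epsilon) := (\delta/\|f_\delta+\epsilon\psi\|_{L^2})(f_\delta+\epsilon\psi)$, so that $\|g(\epsilon)\|_{L^2}=\delta$ exactly and $g(\epsilon)=f_\delta+\epsilon\psi+O(\epsilon^2)$ in $L^2$. Maximality of $u_\delta$ then gives $\iint|v_{g(\epsilon)}|^{2+\frac4N}\leq I(\delta)$, and inserting $g(\epsilon)$ into the above expansion yields $(2+\frac4N)\epsilon\Lambda_\delta(\psi)+O(\epsilon^2)\leq 0$ for all small $\epsilon$ of either sign, which forces $\Lambda_\delta(\psi)=0$. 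Since $f_\delta\neq 0$, one has $L^2=T_{f_\delta}\oplus\RR f_\delta$, so there is a unique real $\mu_\delta$ with $\Lambda_\delta=\mu_\delta M_\delta$; this is exactly \eqref{NL_lagrange}.

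To estimate $\mu_\delta$, I would test the identity at $\psi=f_\delta$, which gives $\mu_\delta\delta^2=\Lambda_\delta(f_\delta)$. Since $\psi=f_\delta$ is of size $\delta$, not infinitesimal, I compute $\Lambda_\delta(f_\delta)$ by comparing two asymptotic expansions of $\iint|v_\epsilon|^{2+\frac4N}-\iint|u_\delta|^{2+\frac4N}$, where $v_\epsilon$ solves \eqref{CP} with initial data $(1+\epsilon)f_\delta$. Lemma \ref{L:linearization} with parameter $\epsilon f_\delta$ gives $\epsilon(2+\frac4N)\Lambda_\delta(f_\delta)+O(\epsilon^2\delta^{2+\frac4N})$, whereas Step 1 of the proof of Lemma \ref{L:growthrate} supplies the global comparison $((1+\epsilon)^{2+\frac4N}-1)I(\delta)+O(\epsilon\delta^{2+\frac8N})$. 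Equating, dividing by $\epsilon(2+\frac4N)$, letting $\epsilon\to 0$, and invoking Lemma \ref{L:first_est} yields $\Lambda_\delta(f_\delta)=C_S\delta^{2+\frac4N}+O(\delta^{2+\frac8N})$, whence \eqref{estimate_mu} and $\mu_\delta>0$ for small $\delta$.

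The only delicate point is the control of the remainder in the direction $f_\delta$, where the test perturbation has the same order as $f_\delta$ itself: a naive first-order Taylor expansion at the finite parameter $\epsilon=1$ would be insufficient, so I rely crucially on the global scaling comparison provided by Step 1 of the proof of Lemma \ref{L:growthrate}. Apart from this, the argument is a standard Lagrange-multiplier reasoning on a real Hilbert manifold.
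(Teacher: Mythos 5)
Your existence-of-$\mu_\delta$ step is the same Lagrange-multiplier argument as the paper's, just spelled out more explicitly (the paper says ``the existence of $\mu_{\delta}$ then follows from the Lagrange multiplier equation'' after deriving the same first-order expansion). Your estimate of $\mu_\delta$, however, is a genuinely different and correct argument. The paper first notes $u_\delta=\delta G+\OOO(\delta^{1+\frac 4N})$ in $L^{2+\frac 4N}_{t,x}$ (which uses \eqref{H_close_to_G}, \eqref{H_estim} from Proposition \ref{P:close_G}), rewrites $\Lambda_\delta(\psi)$ as $\delta^{1+\frac 4N}\re\iint|G|^{\frac4N}\overline{G}\,e^{i\frac{t}{2}\Delta}\psi+\OOO(\delta^{1+\frac8N}\|\psi\|_{L^2})$, then invokes the linear Euler--Lagrange identity \eqref{linear_lagrange} to convert that main term into $C_S\delta^{1+\frac4N}\re\int G_0\psi$, and finally identifies $\mu_\delta$ by matching the two linear functionals. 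You instead test the identity at $\psi=f_\delta$, giving $\mu_\delta\delta^2=\Lambda_\delta(f_\delta)$, and compute $\Lambda_\delta(f_\delta)$ as the (normalized) derivative at $\epsilon=0$ of $\epsilon\mapsto \iint|v_\epsilon|^{2+\frac4N}$ where $v_\epsilon$ has initial data $(1+\epsilon)f_\delta$; you evaluate this derivative a second time via the scaling comparison in Step 1 of the proof of Lemma \ref{L:growthrate}, which yields $\Lambda_\delta(f_\delta)=I(\delta)+\OOO(\delta^{2+\frac8N})$, and then you invoke Lemma \ref{L:first_est}. The two ingredients you use (Step 1 of Lemma \ref{L:growthrate}, Lemma \ref{L:first_est}) are both established independently of this lemma, so there is no circularity. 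A noteworthy advantage of your route is that it does not use the closeness of $f_\delta$ to $\delta G_0$ nor the structure of Foschi's maximizer via \eqref{linear_lagrange}: it only uses that $u_\delta$ is a maximizer on the sphere $\|f\|_{L^2}=\delta$, the scaling law for NLS, and the first-order knowledge of $I(\delta)$; the paper's proof trades that robustness for directness, needing one step less but more input about $u_\delta$. Your error accounting (the $\OOO(\epsilon^2\delta^{2+\frac4N})$ from the quadratic remainder, the $\OOO(\epsilon\delta^{2+\frac8N})$ from the nonlinearity-strength perturbation, passage to $\epsilon\to 0$) is consistent and yields exactly \eqref{estimate_mu}.
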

\begin{proof}
Indeed, by definition 
\begin{equation}
\label{maximizing}
I(\delta)=\max \iint |v|^{2+\frac{4}{N}},
\end{equation} 
where the maximum is taken over all solutions $v$ of \eqref{CP} with initial condition $f_{\delta}+\psi$, such that $\int |f_{\delta}+\psi|^2=\delta^2$. For such a solution $v$, write, as in the proof of Lemma \ref{L:linearization}, $v=u_{\delta}+w$. Then
\begin{multline*}
\iint |v|^{2+\frac{4}{N}}=\iint |u_{\delta}+w|^{2+\frac{4}{N}}\\
=\iint |u_{\delta}|^{2+\frac{4}{N}}+\left(2+\frac 4N\right)\re \iint |u_{\delta}|^{\frac 4N}\overline{u_\delta} w
+\OOO\left(\delta^{\frac{4}{N}}\|\psi\|_{L^2}^{2}\right)\\
=\iint |u_{\delta}|^{2+\frac{4}{N}}+\left(2+\frac 4N\right)\re \iint |u_{\delta}|^{\frac 4N}\overline{u_\delta} L_{\delta}\left(e^{i\frac t2\Delta}\psi\right)
+\OOO\left(\delta^{\frac{4}{N}}\|\psi\|_{L^2}^{2}\right).
\end{multline*}
The existence of $\mu_{\delta}$ then follows from the Lagrange multiplier equation for the maximizing problem \eqref{maximizing}.

We next estimate $\mu_{\delta}$. 
By \eqref{H_close_to_G} and \eqref{H_estim} 
$$ f_{\delta}=\delta G_0+ \OOO\left(\delta^{1+\frac{4}{N}}\right)\text{ in }L^{2}.$$
Thus by Claim \ref{C:first_est},
\begin{equation}
\label{u_delta_G} 
u_{\delta}=\delta G+ \OOO\left(\delta^{1+\frac{4}{N}}\right)\text{ in }L^{2+\frac 4N}_{t,x}.
\end{equation} 
As a consequence, we obtain (assuming $\|\psi\|_{L^2}\leq \delta$)
\begin{multline*}
\re \iint |u_{\delta}|^{\frac 4N}\overline{u}_\delta L_{\delta}\left(e^{i\frac t2\Delta}\psi\right)=
\re \iint |u_{\delta}|^{\frac 4N}\overline{u}_\delta\, e^{i\frac{t}{2}\Delta}\psi+\OOO\left(\delta^{1+\frac{8}{N}}\|\psi\|_{L^2}\right)\\
= \delta^{1+\frac{4}{N}}\re \iint |G|^{\frac 4N}\overline{G}\, e^{i\frac{t}{2}\Delta}\psi+\OOO\left(\delta^{1+\frac{8}{N}}\|\psi\|_{L^2}\right).
\end{multline*}
On the other hand,
$$ \re\int \overline{f}_{\delta}\psi=\re\int \alpha_{\delta} G_0\psi+\OOO\left(\delta^{1+\frac{4}{N}}\|\psi\|_{L^2}\right)=\delta\re\int  G_0\psi+\OOO\left(\delta^{1+\frac{4}{N}}\|\psi\|_{L^2}\right).$$
Combining with \eqref{NL_lagrange}, we get
$$\delta^{1+\frac{4}{N}}\re \iint |G|^{\frac 4N}\overline{G} e^{i\frac{t}{2}\Delta}\psi
=\delta \mu_{\delta}\re\int G_0\psi+\OOO\left(\delta^{1+\frac{8}{N}}\|\psi\|_{L^2}+\mu_{\delta}\delta^{1+\frac{4}{N}}\|\psi\|_{L^2}\right).$$
By \eqref{linear_lagrange},
$$C_S\delta^{1+\frac{4}{N}}\re \int G_0\psi=\delta \mu_{\delta}\re\int G_0\psi+\OOO\left(\mu_{\delta}\delta^{1+\frac{4}{N}}\|\psi\|_{L^2}+\delta^{1+\frac{8}{N}}\|\psi\|_{L^2}\right).$$
This holds for all small $\psi\in L^2$, yielding \eqref{estimate_mu}.
\end{proof}

\subsection{Second order expansion}
\label{SS:second}
\begin{lemma}
\label{L:NL_devt}
Let $v$ be a solution of \eqref{CP} with initial condition $f_{\delta}+\psi$, and assume
$$ \int |f_{\delta}+\psi|^2=\delta^2.$$
Then
\begin{equation}
\label{NL_devt}
\iint |v|^{2+\frac{4}{N}}=I(\delta)-\delta^{\frac 4N}Q(\psi)+\OOO\left(\delta^{\frac 8N}\|\psi\|^{2}_{L^2}+\delta^{\frac 4N-1}\|\psi\|^3_{L^2}+\|\psi\|_{L^2}^{2+\frac 4N}\right).
\end{equation}
\end{lemma}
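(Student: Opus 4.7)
The plan is to use the linearization of Lemma~\ref{L:linearization} together with a Taylor expansion of the nonlinearity about $u_\delta$. First, since $\int|f_\delta+\psi|^2=\delta^2=\int|f_\delta|^2$, one has $2\re\int\overline{f_\delta}\psi+\|\psi\|_{L^2}^2=0$, which yields both the identity $\re\int\overline{f_\delta}\psi=-\frac12\|\psi\|_{L^2}^2$ and the a priori bound $\|\psi\|_{L^2}\le 2\delta$ used later to absorb higher order terms. Writing $v=u_\delta+w$, Lemma~\ref{L:linearization} gives $w=L_\delta(e^{i\frac t2\Delta}\psi)+r$ with $\|r\|_{L^{2+\frac4N}_{t,x}}\lesssim\delta^{\frac4N-1}\|\psi\|_{L^2}^2$ and $\|w\|_{L^{2+\frac4N}_{t,x}}\lesssim\|\psi\|_{L^2}$. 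The pointwise Taylor expansion
\[
|u_\delta+w|^{2+\frac4N}=|u_\delta|^{2+\frac4N}+\tfrac{2(N+2)}{N}|u_\delta|^{\frac4N}\re(\overline{u_\delta}w)+\tfrac{(N+2)^2}{N^2}|u_\delta|^{\frac4N}|w|^2+\tfrac{2(N+2)}{N^2}\re\bigl(|u_\delta|^{\frac4N-2}\overline{u_\delta}^{\,2}w^2\bigr)+E,
\]
with $|E|\lesssim|u_\delta|^{\frac4N-1}|w|^3+|w|^{2+\frac4N}$, integrates via Hölder with exponents $\frac{4/N-1}{2+4/N}+\frac{3}{2+4/N}=1$ to a remainder of size $O(\delta^{\frac4N-1}\|\psi\|_{L^2}^3+\|\psi\|_{L^2}^{2+\frac4N})$, which fits in the claimed error.

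For the linear term, write $w=L_\delta(e^{i\frac t2\Delta}\psi)+r$; the $r$–contribution is bounded by $\|u_\delta\|_{L^{2+4/N}}^{1+4/N}\|r\|_{L^{2+4/N}}\lesssim\delta^{\frac8N}\|\psi\|_{L^2}^2$, and Lemma~\ref{L:lagrange} converts the main part to $\mu_\delta\re\int\overline{f_\delta}\psi$. Using the constraint and the estimate $\mu_\delta=C_S\delta^{\frac4N}+O(\delta^{\frac8N})$, the linear contribution becomes
\[
\tfrac{2(N+2)}{N}\mu_\delta\cdot\bigl(-\tfrac12\|\psi\|_{L^2}^2\bigr)=-\tfrac{N+2}{N}C_S\delta^{\frac4N}\|\psi\|_{L^2}^2+O\bigl(\delta^{\frac8N}\|\psi\|_{L^2}^2\bigr),
\]
which reproduces exactly the first term of $-\delta^{\frac4N}Q(\psi)$.

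For the two quadratic terms, the strategy is to replace $u_\delta$ by $\delta G$ (using \eqref{u_delta_G}) and $w$ by $\tilde\psi:=e^{i\frac t2\Delta}\psi$. Since $\|u_\delta-\delta G\|_{L^{2+\frac4N}_{t,x}}\lesssim\delta^{1+\frac4N}$ and $\|w-\tilde\psi\|_{L^{2+\frac4N}_{t,x}}\lesssim\delta^{\frac4N}\|\psi\|_{L^2}+\delta^{\frac4N-1}\|\psi\|_{L^2}^2$, the elementary inequality $\bigl||a|^{\frac4N}-|b|^{\frac4N}\bigr|\lesssim(|a|^{\frac4N-1}+|b|^{\frac4N-1})|a-b|$ (valid since $\frac4N\ge1$ when $N\le 4$) combined with Hölder with three factors shows each replacement costs only $O(\delta^{\frac8N}\|\psi\|_{L^2}^2+\delta^{\frac4N-1}\|\psi\|_{L^2}^3)$. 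One is then left with $\delta^{\frac4N}$ times the last two terms in the expression \eqref{defQ} of $Q(\psi)$, with the correct signs.

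Collecting the three contributions produces $I(\delta)-\delta^{\frac4N}Q(\psi)$ up to an additional unwanted piece, namely $C_S\delta^{\frac4N}\tfrac{4(N+2)}{N^2}\bigl(\re\int G_0\psi\bigr)^2$, which is the missing summand in $Q$ coming from the nonlinear Lagrange correction that has already been handled via the constraint. This is the one point requiring care: using $f_\delta=\alpha_\delta G_0+\varphi_\delta$ with $\alpha_\delta\sim\delta$, $\|\varphi_\delta\|_{L^2}\lesssim\delta^{1+\frac4N}$ and the constraint, one gets
\[
\bigl|\re\textstyle\int G_0\psi\bigr|\lesssim\tfrac{\|\psi\|_{L^2}^2}{\delta}+\delta^{\frac4N}\|\psi\|_{L^2},
\]
hence $\delta^{\frac4N}\bigl(\re\int G_0\psi\bigr)^2\lesssim\delta^{\frac4N-2}\|\psi\|_{L^2}^4+\delta^{\frac{12}{N}}\|\psi\|_{L^2}^2$. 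Invoking $\|\psi\|_{L^2}\le 2\delta$, the first summand is $\lesssim\delta^{\frac4N-1}\|\psi\|_{L^2}^3$ and the second is $\lesssim\delta^{\frac8N}\|\psi\|_{L^2}^2$, so this extra piece is absorbed in the stated error. This absorption is where the constraint $\|\psi\|_{L^2}\le 2\delta$ is essential, and is the main technical obstacle; the rest is bookkeeping with Hölder and Strichartz.
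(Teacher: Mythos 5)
Your proof is correct and follows essentially the same route as the paper: a second-order Taylor expansion about $u_\delta$, conversion of the linear term via Lemma \ref{L:linearization} and the Lagrange multiplier of Lemma \ref{L:lagrange} together with the mass constraint $\re\int\overline{f_\delta}\psi=-\tfrac12\|\psi\|_{L^2}^2$, replacement of $u_\delta$ by $\delta G$ and $w$ by $e^{i\frac t2\Delta}\psi$ in the quadratic terms using \eqref{u_delta_G}, and absorption of the missing $(\re\int G_0\psi)^2$ piece of $Q$ via the constraint and \eqref{H_close_to_G}, \eqref{H_estim}. The only addition you make explicit (and which the paper leaves implicit) is the a priori bound $\|\psi\|_{L^2}\le 2\delta$ from the triangle inequality, which is indeed what makes $\delta^{\frac4N-2}\|\psi\|_{L^2}^4$ absorbable into $\delta^{\frac4N-1}\|\psi\|_{L^2}^3$.
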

\begin{proof}
Using that $\int |f_{\delta}|^2=\delta^2$, we get
\begin{equation}
\label{eq_norm_psi}
\int |\psi|^2=-2\re \int \overline{f_{\delta}}\psi,
\end{equation} 
and thus by \eqref{H_close_to_G} and \eqref{H_estim},
\begin{equation}
\label{almost_ortho}
\delta^2\left|\re \int G_0\psi\right|^2\leq C\left(\delta^{\frac{8}{N}+2}\|\psi\|^2_{L^2}+\|\psi\|^4_{L^2}\right).
\end{equation}
Expanding $|u_{\delta}+w|^{2+\frac{4}{N}}$ at second order in $w$, we obtain
\begin{multline*}
\iint |v|^{2+\frac{4}{N}}=\iint |u_{\delta}+w|^{2+\frac{4}{N}}\\
=\iint |u_{\delta}|^{2+\frac{4}{N}}+\left(2+\frac 4N\right)\re \iint |u_{\delta}|^{\frac 4N}\overline{u}_\delta w\\
+\left(1+\frac 2N\right)^2\iint |u_{\delta}|^{\frac{4}{N}}|w|^2+\frac{2}{N}\left(1+\frac{2}{N}\right)\re\iint |u_{\delta}|^{\frac 4N-2}\overline{u}_{\delta}^2w^2\\+\OOO\left(\delta^{\frac 4N-1}\|\psi\|^3_{L^2}\right)+\OOO\left(\|\psi\|_{L^2}^{2+\frac 4N}\right).
\end{multline*}
By Lemma \ref{L:linearization}, Lemma \ref{L:lagrange} and \eqref{eq_norm_psi}, 
\begin{multline*}
 \re \iint |u_{\delta}|^{\frac 4N}\overline{u}_\delta w
=\re \iint |u_{\delta}|^{\frac 4N}\overline{u}_\delta L_{\delta}─\left(e^{i\frac t2 \Delta}\psi\right)+\OOO\left(\delta^{\frac{8}{N}}\|\psi\|^2_{L^2}\right)\\
=\mu_{\delta}\int \overline{f}_{\delta}\psi+\OOO\left(\delta^{\frac{8}{N}}\|\psi\|^2_{L^2}\right)=-\frac{\mu_{\delta}}{2}\int |\psi|^2+\OOO\left(\delta^{\frac{8}{N}}\|\psi\|^2_{L^2}\right)\\
=-\frac{C_S}{2}\delta^{\frac 4N}\int |\psi|^2+\OOO\left(\delta^{\frac{8}{N}}\|\psi\|^2_{L^2}\right).
\end{multline*}
By \eqref{u_delta_G}, then Lemma \ref{L:linearization},
\begin{multline*}
\iint |u_{\delta}|^{\frac{4}{N}}|w|^2= \delta^{\frac{4}{N}}\iint |G|^{\frac{4}{N}}|w|^2+\OOO\left(\delta^{\frac{8}{N}}\|\psi\|^2_{L^2}\right)\\
=\delta^{\frac{4}{N}}\iint |G|^{\frac{4}{N}}\left|e^{i\frac{t}{2}\Delta}\psi\right|^2+\OOO\left(\delta^{\frac{8}{N}}\|\psi\|^2_{L^2}\right),
\end{multline*}
and similarly
\begin{equation*}
 \re\iint |u_{\delta}|^{\frac 4N-2}\overline{u}_{\delta}^2w^2=\delta^{\frac{4}{N}}\re\iint |G|^{\frac 4N-2}\overline{G}^2\left(e^{i\frac{t}{2}\Delta}\psi\right)^2+\OOO\left(\delta^{\frac{8}{N}}\|\psi\|^2_{L^2}\right).
\end{equation*} 
Combining the preceding estimates, we obtain 
\begin{multline*}
\iint |v|^{2+\frac{4}{N}}
=\iint |u_{\delta}|^{2+\frac{4}{N}}-C_S\left(\frac{N+2}N\right)\delta^{\frac 4N}\int |\psi|^2\\
+\left(1+\frac 2N\right)^2\delta^{\frac{4}{N}}\iint  |G|^{\frac{4}{N}}\left|e^{i\frac{t}{2}\Delta}\psi\right|^2+\frac{2}{N}\left(1+\frac{2}{N}\right)\delta^{\frac{4}{N}}\re\iint |G|^{\frac 4N-2}\overline{G}^2\left(e^{i\frac t2\Delta}\psi\right)^2\\+\OOO\left(\delta^{\frac 8N}\|\psi\|^{2}_{L^2}+\delta^{\frac 4N-1}\|\psi\|^3_{L^2}+\|\psi\|_{L^2}^{2+\frac 4N}\right),
\end{multline*}
which yields \eqref{NL_devt} in view of \eqref{almost_ortho} and the definition \eqref{defQ} of $Q$.
\end{proof}
We can now conclude the proof of the uniqueness of the maximizer. Assume that $\delta>0$ is small and consider a solution $\tilde{u}_{\delta}$ of \eqref{CP} with initial condition $\tilde{f}_{\delta}=\tilde{\alpha}_{\delta}G_0+\tilde{\varphi}_{\delta}$. Assume that $\tilde{u}_{\delta}$, $\tilde{f}_{\delta}$, $\tilde{\varphi}_{\delta}$ and $\tilde{\alpha}_{\delta}$ also satisfy  \eqref{H_maximizer}, \eqref{H_close_to_G}, \eqref{H_ortho} and \eqref{H_estim}. We must show that $\tilde{u}_{\delta}=u_{\delta}$. 
Let
$$ \psi=(\tilde{\alpha}_{\delta}-{\alpha}_{\delta})G_0+\tilde{\varphi}_{\delta}-\varphi_{\delta}.$$
By \eqref{H_estim}, $\|\psi\|_{L^2}\leq C\delta^{\frac{4}{N}+1}$. 
By Lemma \ref{L:NL_devt} with $v=\tilde{u}_{\delta}$, 
\begin{equation*}
I(\delta)=\iint |v|^{2+\frac{4}{N}}=I(\delta)-\delta^{\frac 4N}Q(\psi)+\OOO\left(\delta^{\frac 8N}\|\psi\|^{2}_{L^2}+\delta^{\frac 4N-1}\|\psi\|^3_{L^2}+\|\psi\|_{L^2}^{2+\frac 4N}\right),
\end{equation*}
and thus,
\begin{equation}
\label{uniqueness1}
\delta^{\frac{4}{N}}Q(\psi)\leq C\left(\delta^{\frac 8N}\|\psi\|^{2}_{L^2}+\delta^{\frac 4N-1}\|\psi\|^3_{L^2}+\|\psi\|_{L^2}^{2+\frac 4N}\right)\leq C \delta^{\frac 8N}\|\psi\|^{2}_{L^2}. 
\end{equation} 
Since $G_0$ is in the kernel of $Q$, $Q(\psi)=Q(\tilde{\varphi}_{\delta}-\varphi_{\delta})$. Using that $\varphi_{\delta}$ and $\tilde{\varphi}_{\delta}$ satisfy the orthogonality conditions \eqref{ortho_intro}, we deduce from Theorem \ref{T:ortho}:
\begin{equation}
\label{uniqueness2}
c\|\tilde{\varphi}_{\delta}-\varphi_{\delta}\|^2\leq Q(\psi).
\end{equation} 
Using that 
$$\alpha^2_{\delta}+\int |\varphi_{\delta}|^2=\delta^2=\tilde{\alpha}^2_{\delta}+\int |\tilde{\varphi}_{\delta}|^2,$$
we obtain, in view of \eqref{H_estim}, 
\begin{equation*}
|\tilde{\alpha}_{\delta}-\alpha_{\delta}|=\left|\frac{\tilde{\alpha}^2_{\delta}-\alpha_{\delta}^2}{\alpha_{\delta}+\tilde{\alpha}_{\delta}}\right|\\
\leq \frac{1}{\delta}\left|\int |\varphi_{\delta}|^2-\int |\tilde{\varphi}_{\delta}|^2\right|
\leq C\delta^{\frac 4N}\|\varphi_{\delta}-\tilde{\varphi}_{\delta}\|_{L^2},
\end{equation*}
and thus for small $\delta$,
\begin{equation}
\label{uniqueness3}
\|\psi\|_{L^2}^2=(\tilde{\alpha}_{\delta}-\alpha_{\delta})^2+\|\varphi_{\delta}-\tilde{\varphi}_{\delta}\|_{L^2}^2\leq 2\|\varphi_{\delta}-\tilde{\varphi}_{\delta}\|_{L^2}^2. 
\end{equation} 
Combining \eqref{uniqueness1}, \eqref{uniqueness2} and \eqref{uniqueness3}, we get
$$ \delta^{\frac{4}{N}}\|\psi\|^2_{L^2}\leq C\delta^{\frac 8N}\|\psi\|^{2}_{L^2},$$
a contradiction if $\delta>0$ is small and $\psi\neq 0$. Thus, $\psi=0$ and $u_{\delta}=\tilde{u}_{\delta}$, which completes the proof.

\section{Coercivity of the quadratic form}
\label{S:quadratic}

In this section we show Theorem \ref{T:ortho}.

Let $F$ be the $N+2$-dimensional space of the null directions for $Q$ that are generated by the continuous symmetries of the linear Schr\"odinger equation:
$$ F=\vect_{\CC} \{G_0,x_jG_0,|x|^2G_0\}$$
($j=1$ or $j=1,2$ in dimension $1$ and $2$ respectively).

We must show that there exists a constant $c>0$ such that
$$ \varphi\in F^{\bot}\Longrightarrow Q(\varphi)\geq c\|\varphi\|_{L^2}^2.$$
It turns out that $F$ is generated by eigenfunctions for the harmonic oscillator defined in  \S \ref{SS:harmonic}. Indeed, in dimension $1$, $F$ is spanned by $h_0$, $h_1$ and $h_2$ and in dimension $2$ by $h_{00}$, $h_{10}$, $h_{01}$ and $h_{20}+h_{02}$.

The outline of this section is as follows. In \S \ref{S:preliminaries} we recall some properties of the harmonic oscillator $\HHH=-\Delta+|x|^2$ and of a lens transform that will be used in the proof. In \S \ref{SS:reduction} we
show that the proof of Theorem \ref{T:ortho} reduces to the proof that $Q(\varphi)>0$ for any
eigenfunction $\varphi$ of the harmonic oscillator $\HHH$ that is
orthogonal to $F$. In \S \ref{SS:1d} and \S \ref{SS:2d} we treat the
reduced problem in $1D$ and $2D$ respectively by estimating the
values taken by the quadratic form on the eigenfunctions of $\HHH$.

\subsection{Preliminaries}
\label{S:preliminaries}

\subsubsection{Harmonic oscillator}
\label{SS:harmonic}

Consider the linear Schr\"odinger equation with the harmonic
potential:
\begin{gather}
 \label{E:LS-harm}
i \partial_{\tau} u - \frac12 \mathcal{H} u =0, \quad (\tau,y)\in \RR\times \RR^N,\\
\notag\text{where }
\mathcal{H}=-\Delta+|y|^2.
\end{gather}
In what follows we briefly recall spectral property of $\HHH$. We
refer to \cite{Carles09} and references therein for more details.

We first review the spectral properties of $\mathcal{H}$ in one
space dimension. The spectrum of $\mathcal{H}$ consists of positive
eigenvalues $\lambda_n=2n+1$, $n=0,1,...$, and the corresponding
eigenfunctions are
\begin{equation}
 \label{E:herm1}
h_n(y) = (-1)^n \, c_n \, e^{y^2/2} \, \partial_y^n(e^{-y^2}), \quad
c_n = \frac1{\sqrt{n!} \, 2^{n/2}},
\end{equation}
here the coefficients $c_n$ are chosen so that $\|h_n\|^2_{L^2(\cR)}
= \sqrt \pi$. Equivalently, these are the Hermite functions
\begin{equation}
 \label{E:herm2}
h_n(y) = \frac{H_n(y)}{\sqrt{2^n \, n!}} \, e^{-y^2/2}, \
\end{equation}
with $H_n(y)$ being the $n^{th}$ Hermite polynomial:
$$
H_n(y) = (-1)^n \, e^{y^2} \, \partial_y^n(e^{-y^2}).
$$
Thus, $H_0(y) = 1$, $H_1(y)=2y$, $H_2(y)=4y^2-2$, $H_3(y)=8y^3-12y$, $H_4(y)=16y^4-48y^2+12$,
etc. These eigenfunctions are orthogonal
\begin{equation}
 \label{E:herm3}
\int_\cR h_j(y) \, h_k(y) \,dy = \frac{1}{\sqrt{2^j \, j!}\sqrt{2^k
\, k!}}\int_\cR H_j(y) \, H_k(y) \, e^{-y^2}\, dy = \sqrt \pi \,
\delta_{jk},
\end{equation}
and they span $L^2(\cR)$. 

In the $2D$ set up, $y=(y_1,y_2) \in \cR^2$, the spectrum of
$\mathcal{H}$ consists as well of a discrete set of positive
eigenvalues $\{\lambda_n\}_{n\in \NN}$ and, for $n \in \NN$, one has
\begin{equation*}
\lambda_{n} = 2n+2.
\end{equation*}
To each eigenvalue $\lambda_n$ there corresponds a set of
eigenfunctions $h_{jk}(y)$ with the property that $j+k=n$ and
$
h_{jk}(y) = h_j(y_1)h_k(y_2),
$
where the $h_n$'s are the one-dimensional eigenfunctions.
For example, $h_{00}(y) = e^{-|y|^2}$ is the only eigenfunction
corresponding to the smallest eigenvalue $\lambda_0=2$. For
$\lambda_1=4$, the eigenfunctions are
$$
h_{10}(y)=\sqrt 2 \,y_1\, e^{-|y|^2/2} \quad \text{and} \quad
h_{01}(y) = \sqrt 2 \,y_2\, e^{-|y|^2/2},
$$
for $\lambda_2=6$, they are
$$
h_{20}(y)=2^{-1/2} (2y_1^2-1) e^{-|y|^2/2}, \quad h_{02}(y)=2^{-1/2}
(2y_2^2-1) e^{-|y|^2/2}
$$
$$
\text{and} \quad h_{11}(y)={2} y_1 y_2 e^{-|y|^2/2}.
$$

\subsubsection{The Lens transform}
\label{SS:lens}

For a function $u(t,x): I \times \cR^N \to \cC$, define the {\it
lens transform}\footnote{We use the name 'lens transform' as in \cite{Tao09NY} but it should not be confused
with the pseudo-conformal inversion \eqref{PC} of Talanov which is sometimes also called the lens transform.} $\LL u$ of $u$ by
$$
\LL u (\tau, y) = \frac1{\cos^{N/2} \tau} \, u\left(\tan \tau,
\frac{y}{\cos \tau}\right) \, e^{-i |y|^2\, \frac{\tan \tau}2}.
$$
The new variables $(\tau,y)$ are defined by $t = \tan \tau$ and
$x=\frac{y}{ \cos \tau}$, $\tau\in (-\frac{\pi}{2},\frac{\pi}{2})$, and thus,
$\LL u: \tan^{-1}(I)\cap (-\frac{\pi}{2},\frac{\pi}{2}) \times \cR^N \to \cC$. If $I
= \cR$, then $\LL u:(-\frac{\pi}2, \frac{\pi}2) \times \cR^N \to
\cC$: the lens transform compactifies the time. For more details
see for example \cite{Carles02}, \cite{Tao09NY} and reference therein.

If $u(t,x)$ solves \eqref{CP} (for some $\gamma\in \RR$), then
$v=\LL u (\tau,y)$ solves
\begin{equation}
 \label{E:harm4}
\ds i \partial_\tau v - \frac12 \HHH v = -\gamma |v|^{\frac 4N} v,
\end{equation}
and vice versa.

The lens transform preserves the initial data $(\LL u)(0) = u(0)$, and thus, the mass of the solution:
 $$\|(\LL u)(0)\|_{L^2} = \|u(0)\|_{L^2}.$$
Furthermore, all Strichartz norms are also preserved, in particular:
$$
\|\LL u\|_{L^{\frac{4}{N}+2}_{t,x}((-\pi/2,\pi/2)\times \RR^N)} =
\|u\|_{L^{\frac{4}{N}+2}_{t,x}(\RR\times\RR^N)}.
$$

{\bf Example.} Let $G_0 = \frac{1}{\pi^{N/4}}e^{-|x|^2/2}$. The
solution to the linear Schr\"odinger equation \eqref{CPlin} is given
by \eqref{def_G}. The definition of $\LL$ shows that the solution
$e^{-i \, \frac{\tau}{2} \, \mathcal{H}}G_0$ of \eqref{E:LS-harm}
is given by
$$
\widetilde{G}(\tau,y) = \frac{1}{\pi^{N/4}} \,
e^{-i\frac{N}{2}\tau} \, e^{-|y|^2/2}=(\LL G)(\tau,y),
$$
which is consistent with the fact that $G_0$ is an eigenfunction for
the eigenvalue $\lambda_0 = N$ of $\HHH$ (in dimension $N = 1, 2$).

For later use we note that using the invariance of the initial condition and the $L^{\frac 4N+2}$
norm by the lens transform $\LL$, we can rewrite the 
definition \eqref{defQ} of the quadratic form as
\begin{multline}
 \label{defQH}
Q(\varphi) = C_S\left[\frac{N+2}{N}\int |\varphi|^2+\frac{4(N+2)}{N^2}
\left(\re \int G_0\varphi\right)^2\right]\\
-\frac{(N+2)^2}{N^2}\int_{-\frac{\pi}{2}}^{\frac{\pi}{2}}\int_{\RR^N}
G_0^{\frac{4}{N}}\left|e^{-i\frac
{\tau}{2}\HHH}\varphi\right|^2-\frac{2(N+2)}{N^2}\re
\int_{-\frac{\pi}{2}}^{\frac{\pi}{2}}\int_{\RR^N} G_0^{\frac
4N}e^{iN\tau}\left(e^{-i\frac{\tau}{2}\HHH}\varphi\right)^2.
\end{multline}

\subsection{Reduction of the problem}
\label{SS:reduction}

We prove here the following proposition:
\begin{prop}
 \label{P:reduction}
Assume that the conclusion of Theorem \ref{T:ortho} does not
hold. Then there exists an eigenfunction $\varphi$ of $\HHH$,
satisfying the orthogonality relations \eqref{ortho_intro} and such that
$Q(\varphi)=0$.
\end{prop}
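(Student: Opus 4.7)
I would argue by contradiction and use the lens-transform reformulation \eqref{defQH} to reduce the problem to the spectral decomposition of the harmonic oscillator $\mathcal{H}=-\Delta+|x|^2$. Let $E_k=\ker(\mathcal{H}-\lambda_k I)$ with $\lambda_k=2k+N$. The first observation is that in both $N=1$ and $N=2$ the kernel directions $F$ live entirely in $E_0\oplus E_1\oplus E_2$ (since $G_0\in E_0$, $x_jG_0\in E_1$, and $|x|^2G_0\in E_0\oplus E_2$), so the constraint $\varphi\in F^\perp$ is compatible with the eigenspace decomposition, and in particular $E_k\subset F^\perp$ for every $k\geq 3$.

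The key step is to show that the real bilinear form $B_R$ associated to $Q$ is block-diagonal with respect to $L^2=\bigoplus_k E_k$, that is $B_R(\varphi_j,\varphi_k)=0$ whenever $\varphi_\ell\in E_\ell$ and $j\neq k$. Using $e^{-i\tau\mathcal{H}/2}\varphi_\ell=e^{-i\tau\lambda_\ell/2}\varphi_\ell$, each cross-term picks up a $\tau$-frequency $j-k$ (for the $|\cdot|^2$ nonlinearity) or $j+k$ (for the $(\cdot)^2$ nonlinearity, the factor $e^{iN\tau}$ absorbing the constant shift coming from $\lambda_j+\lambda_k=2(j+k)+2N$). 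Since $\int_{-\pi/2}^{\pi/2}e^{-i\tau m}\,d\tau$ vanishes for every even nonzero integer $m$, only the case of odd $m$ needs to be addressed; for such $m$ I would use the parity properties of Hermite functions to conclude that $\int G_0^{4/N}\overline{\varphi_j}\varphi_k\,dx=0$, in 2D applying the argument in each coordinate via the product structure $h_{ab}(x_1,x_2)=h_a(x_1)h_b(x_2)$.

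Once block-diagonality is in place, $Q(\varphi)=\sum_k Q(\pi_k\varphi)$ for every $\varphi\in F^\perp$, and the infimum
\[
c_* := \inf_{\varphi\in F^\perp\setminus\{0\}}\frac{Q(\varphi)}{\|\varphi\|_{L^2}^2}
\]
equals $\inf_k c_k$, where $c_k$ is the minimum of $Q/\|\cdot\|_{L^2}^2$ on the finite-dimensional space $E_k\cap F^\perp$. Using the concentration of $h\in E_k$ near $|x|\sim\sqrt{\lambda_k}$ against the fixed Gaussian weight $G_0^{4/N}$, the two nonlinear contributions to $Q$ become negligible compared to the $L^2$ term, so $c_k\to C_S(N+2)/N>0$ and $\inf_k c_k$ is attained at some finite index $k_0\geq 3$. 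If Theorem \ref{T:ortho} fails then $c_*=0$ (since $Q\geq 0$), hence $c_{k_0}=0$, and any minimizer $\psi\in E_{k_0}$ is a nonzero eigenfunction of $\mathcal{H}$ satisfying \eqref{ortho_intro} with $Q(\psi)=0$, which is the content of Proposition \ref{P:reduction}. The main obstacle is the block-diagonality step itself, where the work is in matching the oscillatory cancellations from the $\tau$-integral on $(-\pi/2,\pi/2)$ with the $x$-parity cancellations of Hermite functions.
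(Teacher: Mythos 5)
Your approach is genuinely different from the paper's, and its central structural observation, block-diagonality of the bilinear form associated to $Q$ on $\bigoplus_k E_k$ (where $E_k=\ker(\mathcal{H}-\lambda_k)$), is correct; it is in fact equivalent to the invariance $Q(e^{-i\tau_0\mathcal{H}/2}\varphi)=Q(\varphi)$ that the paper establishes in Step~2 of its proof via the lens-transform formula \eqref{devt_QH} and the $\pi$-periodicity identity for $e^{-i\tau\mathcal{H}/2}$. The two arguments diverge after that. The paper's proof is entirely soft: Lemma \ref{L:compactQ} (which rests on local smoothing for the free Schr\"odinger flow) shows that $\widetilde{E}=E\cap F^\perp$ is a nontrivial finite-dimensional complex subspace, and the abstract fact that a strongly continuous one-parameter group on a finite-dimensional space has a bounded generator then produces an eigenvector of $\mathcal{H}$ in $\widetilde{E}$, with no Hermite computations. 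Your plan instead requires the quantitative input that $c_k$ stays bounded away from $0$ for large $k$, and this is where the proposal is only a sketch: the heuristic of concentration near $|x|\sim\sqrt{\lambda_k}$ is right, but turning it into the uniform estimate $\sup\{\int G_0^{4/N}|\varphi|^2 : \varphi\in E_k,\ \|\varphi\|_{L^2}=1\}\to 0$ over the $(k+1)$-dimensional eigenspaces in $2D$ is precisely the content of \S\ref{SS:1d}--\S\ref{SS:2d} via the Wang product formula; a bare $L^\infty$ bound for Hermite eigenfunctions is only $O(1)$ in dimension $2$. So your route front-loads into the reduction step the hard computation that the proposition is designed to defer, and as a proof of Proposition \ref{P:reduction} alone it is incomplete on exactly that point.

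Two minor remarks. First, the claim that the minimizing index satisfies $k_0\geq 3$ is not automatic: in $2D$ one has $E_2\cap F^\perp=\vect_{\CC}\{h_{20}-h_{02},h_{11}\}\neq\{0\}$, and $k_0=2$ is excluded only by the explicit computation \eqref{Qnonzero1}; your conclusion survives regardless, since any minimizer in $E_{k_0}\cap F^\perp$ is still an eigenfunction satisfying \eqref{ortho_intro}. Second, for $\varphi\in E_k$ with $k\geq 1$ the second nonlinear term in $Q$ vanishes identically, because $\int_{-\pi/2}^{\pi/2}e^{-2ik\tau}\,d\tau=0$; this simplifies the estimate but still leaves the first term $\int G_0^{4/N}|\varphi|^2$ to be controlled.
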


We define
$$
E=\{\varphi\in L^2,\quad Q(\varphi)=0\}.
$$
Since $Q$ is a real positive quadratic form, we know that $E$ is a real vector
space. Before proving Proposition \ref{P:reduction}, we need a few
preliminary results.

\begin{lemma}
\label{L:compactQ}
Let $\{\varphi_{n}\}$ be a \emph{bounded} sequence in $L^2$ such that
\begin{equation}
 \label{nullQ}
\lim_{n\to \infty}Q(\varphi_{n})=0.
\end{equation}
Then there exists a subsequence of $\{\varphi_{n}\}$ that converges
strongly in $L^2$ to an element of $E$.
\end{lemma}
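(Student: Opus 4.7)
The plan is to combine the linear profile decomposition (Lemma~\ref{L:cc_MV}) with a Pythagorean-type splitting of $Q$ that isolates the coercive diagonal $\frac{N+2}{N}C_S\|\cdot\|_{L^2}^2$ from weakly continuous perturbations. After passing to a subsequence, let $\varphi_\infty$ be the weak $L^2$-limit of $\{\varphi_n\}$ and apply the profile decomposition to write
\begin{equation*}
\varphi_n=\varphi_\infty+\sum_{j=2}^{J}\Gamma_n^j(U^j)(0)+h_n^J,
\end{equation*}
where, after absorbing any bounded-parameter profile into $\varphi_\infty$, each $\Gamma_n^j$ for $j\ge 2$ has parameters diverging as in \eqref{def_ortho}, and $\|e^{i\frac{t}{2}\Delta}h_n^J\|_{L^{\frac 4N+2}_{t,x}}\to 0$ as $J\to\infty$.

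The heart of the proof is the Pythagorean-type expansion
\begin{equation*}
Q(\varphi_n)=Q(\varphi_\infty)+\sum_{j=2}^{J}Q\bigl(\Gamma_n^j(U^j)(0)\bigr)+Q(h_n^J)+\eta_n^J,
\end{equation*}
with $\lim_{J\to\infty}\limsup_{n\to\infty}|\eta_n^J|=0$. The $\|\cdot\|_{L^2}^2$ contribution decouples via \eqref{pythagore_L2}, and the term $(\re\int G_0\,\cdot)^2$ decouples since $G_0\in L^2$ is fixed and localized, which annihilates divergent profiles and, up to subsequence, the remainder. For the two weighted bilinear forms appearing in \eqref{defQ}, the cross terms between distinct profiles vanish by the orthogonality of their parameters together with a change-of-variables argument modelled on the proof of \eqref{pythagore_L4}, while the cross terms with $h_n^J$ vanish by H\"older combined with $\|e^{i\frac{t}{2}\Delta}h_n^J\|_{L^{\frac 4N+2}}\to 0$ and $\|G\|_{L^{\frac 4N+2}}<\infty$.

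The key ingredient is that the two weighted bilinear forms vanish in the limit when evaluated on any divergent profile: for $\Gamma_n^j$ with diverging parameters and any $U\in L^2$,
\begin{equation*}
\iint|G|^{\frac 4N}\bigl|\Gamma_n^j(U)\bigr|^2\,dt\,dx\to 0,\qquad \iint|G|^{\frac 4N-2}\overline{G}^2\bigl(\Gamma_n^j(U)\bigr)^2\,dt\,dx\to 0.
\end{equation*}
Since the linear Schr\"odinger flow commutes with $\Gamma_n^j$, the change of variables induced by $\Gamma_n^j$ turns the first integral into $\iint|(\Gamma_n^j)^{-1}G|^{\frac 4N}|U|^2$ (and similarly for the second), where $(\Gamma_n^j)^{-1}G$ is again a Gaussian whose parameters diverge by duality. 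The explicit form \eqref{def_G} of $G$ shows that $(\Gamma_n^j)^{-1}G\to 0$ pointwise in $(t,x)$ while remaining uniformly bounded in $L^{\frac 4N+2}_{t,x}$; H\"older plus the dominated convergence theorem then yield the claim. Consequently $Q(\Gamma_n^j(U^j)(0))\to\frac{N+2}{N}C_S\|U^j(0)\|_{L^2}^2$ for each $j\ge 2$, and $\liminf_{n\to\infty}Q(h_n^J)\ge\frac{N+2}{N}C_S\lim_n\|h_n^J\|_{L^2}^2+o_J(1)$.

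Combining these identities with $Q(\varphi_n)\to 0$ and the non-negativity of each summand forces $U^j(0)=0$ for every $j\ge 2$, $\lim_{J\to\infty}\lim_n\|h_n^J\|_{L^2}=0$, and $Q(\varphi_\infty)=0$. The Pythagorean identity \eqref{pythagore_L2} then upgrades the weak convergence to the desired strong $L^2$-convergence $\varphi_n\to\varphi_\infty$, and $\varphi_\infty\in E$. The main obstacle I foresee is making the Pythagorean-type decomposition of $Q$ fully rigorous: unlike the scale-invariant Pythagorean identity \eqref{pythagore_L4} for the pure $L^{\frac 4N+2}$-norm, the presence of the fixed weight $|G|^{\frac 4N}$ (and of the complex weight $|G|^{\frac 4N-2}\overline{G}^2$) breaks the symmetry between profiles and $G$, so the off-diagonal and cross-term estimates must exploit the explicit pointwise Gaussian decay of $G$ in both space and time.
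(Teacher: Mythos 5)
Your strategy is workable but takes a genuinely different and substantially heavier route than the paper. The paper avoids profile decomposition entirely. It isolates the coercive diagonal part by writing $Q(\varphi)=c_Q\|\varphi\|_{L^2}^2+B(\varphi,\varphi)$ with $c_Q=\tfrac{N+2}{N}C_S$, and then proves the single compactness fact that $\psi_n\rightharpoonup 0$ weakly in $L^2$ implies $B(\psi_n,\psi_n)\to 0$. That fact is obtained by the local smoothing effect (which gives $e^{i\frac{t}{2}\Delta}\psi_n\to 0$ in $L^2_{\mathrm{loc}}(\RR\times\RR^N)$) plus the integrability/decay of $G$ to control the far region via H\"older against $\|G\|_{L^{\frac4N+2}_{t,x}}$. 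Once $B$ is weakly continuous, the conclusion follows from the elementary Cauchy-sequence argument: if $\{\varphi_n\}$ were not Cauchy, one would have a subsequence with $\varphi_{k_n}-\varphi_{j_n}\rightharpoonup 0$ and $\|\varphi_{k_n}-\varphi_{j_n}\|_{L^2}\geq\eps_0$, yet by nonnegativity of $Q$ and Cauchy--Schwarz, $Q(\varphi_{j_n}-\varphi_{k_n})\leq 2\bigl(Q(\varphi_{j_n})+Q(\varphi_{k_n})\bigr)\to 0$, forcing $c_Q\|\varphi_{j_n}-\varphi_{k_n}\|^2_{L^2}\to 0$, a contradiction.

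Your route exploits the same underlying phenomenon (the non-coercive part of $Q$ sees only the fixed, decaying weight $G$ and therefore tunnels through weak convergence), but packages it via the full profile decomposition and a Pythagorean expansion of $Q$. You have correctly identified the hard part: unlike the scale-invariant expansion \eqref{pythagore_L4}, the weighted bilinear terms in \eqref{defQ} do not split across profiles for free, since the change of variables by $\Gamma_n^j$ does not leave $|G|^{\frac4N}$ invariant --- it produces a scaled and translated Gaussian multiplied by a Jacobian factor $\rho_n^{-2}$, and the resulting quantity is not uniformly bounded by $\|U\|_{L^2}^2$ for all parameter regimes. Proving both the orthogonality of profile pairs in the weighted forms and the double-limit treatment of the remainder $h_n^J$ amounts, in effect, to re-deriving the weak continuity of $B$ in a more intricate setting. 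You would also need to be more careful than ``absorbing any bounded-parameter profile into $\varphi_\infty$'': the parameters of a single profile may partially diverge (e.g.\ $t_n\to\infty$ with $\rho_n$ fixed), and this case still needs the same decay/H\"older argument as the paper. So your approach would get there, but it pays for unnecessary machinery (the nonlinear/linear decomposition, the orthogonality bookkeeping) to establish what the direct weak-convergence argument delivers in a few lines. If you want a clean proof, drop the profile decomposition, prove the weak continuity of $B$ directly using local smoothing and $\iint|G|^{\frac4N+2}<\infty$, and run the Cauchy-sequence argument.
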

\begin{proof}
Assume after extraction, 
$$\varphi_{n} \rightharpoonup\varphi \text{ weakly in }L^2\text{ as }n\to\infty.$$
Write
\begin{equation}
\label{Q_B}
Q(\varphi)=c_Q\int |\varphi|^2+B(\varphi,\varphi),
\end{equation}
where $c_Q=C_S\frac{N+2}{N}$ and the symmetric bilinear form $B$ is defined by
\begin{multline*}
B(\varphi,\psi)=C_S\frac{4(N+2)}{N^2}\left(\re \int G_0\varphi\right)\left(\re \int G_0\psi\right)\\
-\frac{(N+2)^2}{N^2}\re \int_{\RR}\int_{\RR^N} |G|^{\frac{4}{N}}\left(e^{i\frac t2\Delta}\varphi \right)\left(e^{-i\frac t2\Delta}\overline{\psi}\right)\\
-\frac{2(N+2)}{N^2}\re \int_{\RR}\int_{\RR^N} G^{\frac 4N}\left(e^{i\frac t2\Delta}\varphi\right)\left(e^{i\frac t2\Delta}\psi\right).
\end{multline*}

We will use the following standard property of the Schr\"odinger linear flow:
\begin{claim}
\label{C:LS_compact}
$$\psi_n\rightharpoonup 0\text{ weakly in }L^2\Longrightarrow e^{i\frac{t}{2}\Delta}\psi_n\to 0\text{ strongly in }L^2_{\mathrm{loc}}(\RR\times \RR^N).$$
\end{claim}
Indeed, by the local smoothing effect \cite{Sj87,Ve88,CoSa89}, $e^{i\frac{t}{2}\Delta}$ defines a continuous map from $L^2(\RR^N)$ to $L^2\left(\RR,H^{1/2}_{\mathrm{loc}}(\RR^N)\right)$. Using the equation \eqref{CPlin}, we see that it also defines a continuous map from $L^2(\RR^N)$ to $H^{1/4}_{\mathrm{loc}}\left(\RR^{N+1}\right)$. The claim follows from the local compactness of the embedding of $H^{1/4}$ in $L^2$.

Combining Claim \ref{C:LS_compact} with the decay of $G$ at infinity, we get 
\begin{equation}
\label{compactnessB}
 \psi_n\rightharpoonup 0\text{ weakly in }L^2\Longrightarrow B(\psi_n,\psi_n)\to 0.
\end{equation}
We will show by contradiction that $\{\varphi_n\}$ is a Cauchy sequence in $L^2$.
If not, there exist sequences of integer $\{j_n\}$, $\{k_n\}$ that go to $\infty$ such that
\begin{equation}
\label{nonCauchy}
 \forall n,\quad \|\varphi_{k_n}-\varphi_{j_n}\|_{L^2}\geq \eps_0>0.
\end{equation}
The weak convergence of $\{\varphi_n\}$ in $L^2$ implies
\begin{equation}
\label{weak_kn}
\varphi_{k_n}-\varphi_{j_n} \rightharpoonup 0\text{ weakly in }L^2.
\end{equation} 
Furthermore, \eqref{nullQ} and Cauchy-Schwarz inequality ($Q$ is positive) implies
$$ 0\leq Q(\varphi_{j_n}-\varphi_{k_n})\leq 2\left(Q(\varphi_{j_n})+Q(\varphi_{k_n})\right)\longrightarrow 0\text{ as }n\to\infty.$$
Combining with \eqref{compactnessB} and \eqref{weak_kn} one gets
$$ \lim_{n\to \infty}\|\varphi_{j_n}-\varphi_{k_n}\|_{L^2}=0,$$
contradicting \eqref{nonCauchy}. The proof is complete.
\end{proof}

\begin{lemma}
\label{L:complex} The space $E$ is a finite dimensional vector space over $\CC$.
\end{lemma}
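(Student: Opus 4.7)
My plan is to deduce the two assertions---finite-dimensionality and the complex vector space structure---from Lemma~\ref{L:compactQ} together with a phase analysis of the form $Q$.

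\emph{Finite-dimensionality (over $\RR$).} Let $\{\varphi_n\}\subset E$ be bounded in $L^2$. Since $Q(\varphi_n)=0$ for every $n$, Lemma~\ref{L:compactQ} applies and provides a subsequence converging strongly in $L^2$ to some element of $E$. Hence the closed unit ball of $E$ is relatively compact in $L^2$, and by the Riesz characterization of finite-dimensional normed spaces, $\dim_{\RR} E<+\infty$.

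\emph{Stability under the phase action $\varphi\mapsto e^{i\theta}\varphi$.} Inspecting~\eqref{defQ}, the term $\int|\varphi|^2$ and the term $\iint |G|^{\frac 4N}|e^{i\frac{t}{2}\Delta}\varphi|^2$ are unchanged under $\varphi\mapsto e^{i\theta}\varphi$. Since $e^{i\frac{t}{2}\Delta}$ commutes with scalar multiplication, $(e^{i\frac{t}{2}\Delta}\varphi)^2$ acquires a factor $e^{2i\theta}$, and a short computation with $z=\int G_0\varphi$ gives $(\re\int G_0 e^{i\theta}\varphi)^2=\tfrac{1}{2}|z|^2+\tfrac{1}{2}\re(e^{-2i\theta}z^2)$. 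One obtains the identity
\[
Q(e^{i\theta}\varphi)=A(\varphi)+\re\bigl(e^{2i\theta}B(\varphi)\bigr),
\qquad A(\varphi)\in\RR,\ B(\varphi)\in\CC,
\]
with $A$ collecting the $\theta$-independent pieces. The condition $Q\ge 0$ for every $\theta$ and $\varphi$ forces $A(\varphi)\ge|B(\varphi)|$; combining with the assumption $Q(\varphi)=A(\varphi)+\re B(\varphi)=0$ then pins down $B(\varphi)=-A(\varphi)\in\RR_{\le 0}$, so that
\[
Q(e^{i\theta}\varphi)=2\,A(\varphi)\,\sin^2\theta.
\]

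\emph{The main obstacle: showing $A(\varphi)=0$ on $E$.} Once this is known, $Q(e^{i\theta}\varphi)\equiv 0$, and so $e^{i\theta}\varphi\in E$ for all $\theta$; in particular $iE\subset E$, endowing $E$ with a complex structure. To prove $A\equiv 0$ on $E$, I would pass to the lens-transform representation~\eqref{defQH} and expand $\varphi=\sum c_n h_n$ in the Hermite basis of $\HHH$. The integrals in $\tau$ over $(-\pi/2,\pi/2)$ of $e^{-i(\lambda_n-\lambda_m)\tau/2}$ and $e^{i(N\tau-(\lambda_n+\lambda_m)\tau/2)}$ produce sharp parity/resonance selection rules on the pairs $(n,m)$, so that $Q$ splits into uncoupled blocks. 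On each block, $A$ appears as the $U(1)$-average of $Q$; using $Q\ge 0$ and a direct estimate of the Hermite matrix elements $\int G_0^{4/N}h_n h_m$, one shows that the matrix of $A$ is dominated by that of $Q$, which forces $A(\varphi)=0$ whenever $\varphi\in\ker Q$. This spectral bookkeeping is the delicate step.

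Combining the two parts, $E$ is simultaneously a finite-dimensional real subspace of $L^2$ and closed under multiplication by $i$, hence a finite-dimensional complex subspace of $L^2$ with $\dim_{\CC}E=\tfrac12\dim_{\RR}E$.
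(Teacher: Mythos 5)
Your proof of finite-dimensionality is correct and matches the paper: Lemma~\ref{L:compactQ} gives compactness of the unit ball of $E$, hence $\dim_\RR E<\infty$ by Riesz. The phase decomposition $Q(e^{i\theta}\varphi)=A(\varphi)+\re\bigl(e^{2i\theta}B(\varphi)\bigr)$ is also a correct observation, and your deduction that on $E$ one has $B(\varphi)=-A(\varphi)$ and $Q(e^{i\theta}\varphi)=2A(\varphi)\sin^2\theta$ is fine. You have therefore correctly reduced the problem to showing $A(\varphi)=0$ (equivalently, $\re B(\varphi)=0$) for every $\varphi\in E$.

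The gap is in that reduced step. The claim that ``the matrix of $A$ is dominated by that of $Q$'' cannot hold in the sense needed. Indeed, on the set you are trying to analyze, $Q(\varphi)=0$; if some $\varphi\in E$ had $A(\varphi)>0$, then $A(\varphi)\le C\,Q(\varphi)$ would fail for every $C$. In other words, a bound $A\le CQ$ would immediately imply what you want, but you offer no mechanism to prove such a bound, and the positivity $Q\ge0$ alone gives only $A\ge|B|$, which is consistent with $Q(\varphi)=0$ and $A(\varphi)>0$ (take $B=-A$). The ``$U(1)$-average'' interpretation of $A$ is correct but points the wrong way: averaging $Q\ge0$ gives $A\ge0$, not $A\le Q$. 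So the argument as sketched is circular.

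What the paper actually does is prove an \emph{identity}, not an inequality: after splitting $\varphi=\alpha G_0+\tilde\varphi$ with $\int\tilde\varphi\,G_0=0$ (and noting $\alpha G_0\in E$, hence $\tilde\varphi\in E$ because $E$ is an $\RR$-vector space), one shows that $\re B(\tilde\varphi)=0$ for \emph{every} $\tilde\varphi\perp G_0$, regardless of whether $\tilde\varphi\in E$. In the lens-transform picture this is the statement that
\[
\re\int_{-\pi/2}^{\pi/2}\int_{\RR^N} G_0^{4/N} e^{iN\tau}\bigl(e^{-i\tau\HHH/2}\tilde\varphi\bigr)^2\,dy\,d\tau=0,
\]
and it is proved by expanding $\tilde\varphi$ in Hermite eigenfunctions $h_{n}$ with $n\ge1$ (dropping the $G_0$ mode), so that the square carries only frequencies $e^{-im\tau}$ with $m\ge N+2$; for odd $m$ the $y$-integral vanishes by parity, and for even $m\ge N+2$ the $\tau$-integral $\int_{-\pi/2}^{\pi/2}e^{i(N-m)\tau}\,d\tau$ vanishes. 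With $\int\tilde\varphi\,G_0=0$ the remaining piece of $B(\tilde\varphi)$ (coming from $(\re\int G_0\varphi)^2$) is also zero, so $\re B(\tilde\varphi)=0$, hence $Q(i\tilde\varphi)=Q(\tilde\varphi)=0$ and $i\varphi=i\alpha G_0+i\tilde\varphi\in E$. Your reduction to ``$A=0$ on $E$'' is therefore sound, but you need this concrete parity/frequency cancellation (together with the $\varphi=\alpha G_0+\tilde\varphi$ splitting) in place of the unjustified domination claim.
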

\begin{proof}
The space $E$ is a vector space over $\RR$. To  show that it is a vector space over $\CC$, it is sufficient to show that it is stable by multiplication by $i$.
Let $\varphi\in E$. Write $\varphi=\alpha G_0+\tilde{\varphi}$, with
$\alpha=\int \varphi G_0$, so that
\begin{equation}
 \label{ortho_tphi}
\int \tilde{\varphi} G_0=0.
\end{equation}
The function $i\alpha G$ is in $E$ and $E$ is stable by addition. To
show that $i\varphi\in E$ we must show that $i\tilde{\varphi}\in E$.
By \eqref{defQH},
\begin{multline*}
Q(i\tilde{\varphi})= Q(\tilde{\varphi})-\frac{8(N+2)}{N^2}
C_S\left(\re \int G_0\tilde{\varphi}\right)^2\\
+ \frac{4(N+2)}{N} \re \int_{-\pi/2}^{\pi/2}\int_{\RR^N} G_0^{\frac
4N}e^{iN\tau}\left(e^{-i\frac
{\tau}{2}\HHH}\tilde{\varphi}\right)^2.
\end{multline*}
We know that $\tilde{\varphi}\in E$, so $Q(\tilde{\varphi})=0$
and it suffices to show:
\begin{gather}
\label{null1}
\left(\re \int G_0\tilde{\varphi}\right)^2=0\\
\label{null2}
\re \int_{-\pi/2}^{\pi/2}\int_{\RR^N} G_0^{\frac 4N}e^{iN\tau}
\left(e^{-i\frac \tau 2\HHH}\tilde{\varphi}\right)^2=0.
\end{gather}
The first equality follows immediately from \eqref{ortho_tphi}. Let
us show the second equality in the case $N=2$. By
\eqref{ortho_tphi}, $\tilde{\varphi}$ is orthogonal to the first
eigenfunction $h_{00}$ of $\HHH$. Thus,
$e^{-i\frac{\tau}{2}\HHH}\tilde{\varphi}$ is of the form
$$
e^{-i\frac{\tau}{2}\HHH}\tilde{\varphi}=\sum_{\substack{(n_1,n_2)\in
\NN^2 \\n_1+n_2\geq 1}} \alpha_{n_1n_2}
e^{-i\tau(n_1+n_2+1)}h_{n_1n_2}(y),
$$
where by definition
$
\alpha_{n_1n_2}=\int_{\RR^2} \tilde{\varphi}(y)h_{n_1n_2}(y)dy.
$
It follows from the definition of $h_{n_1n_2}$ that it is even if
$n_1+n_2$ is even and odd if $n_1+n_2$ is odd. Expanding
$\left(e^{-i\frac{\tau}{2}\HHH}\tilde{\varphi}\right)^2$, we can write
$$\re \int_{-\pi/2}^{\pi/2}\int_{\RR^2} G_0^{2}e^{2i\tau}
\left(e^{-i\frac \tau 2\HHH}\tilde{\varphi}\right)^2 = \re \int_{-\pi/2}^{\pi/2}\int_{\RR^2} G_0^{2}e^{2i\tau} \sum_{m\geq 4}e^{-i\tau
m}g_m(y)\,dy\,d\tau,$$
where $m\geq 4$ and $g_m\in C^{\infty}\left(\RR^N\right)$ is 
exponentially decaying. Again, $g_m$
is even if $m$ is even and odd if $m$ is odd. Then \eqref{null2}
will follow from
\begin{equation}
\label{null2_bis}
\re \int_{-\pi/2}^{\pi/2}\int_{\RR^2} G_0^{2}(x)e^{i 2\tau} e^{-i\tau m}g_m(y)\,dy\,d\tau=0.
\end{equation}
We distinguish two cases. If $m$ is odd, then $
\int_{\RR^2}G_0(y)^2g_m(y) \,dy=0$ (it is the integral of an odd
function on $\RR^2$), and \eqref{null2_bis} follows. If $m$ is even,
using that $m\geq 4$, we get that $\int_{-\pi/2}^{\pi/2}
e^{2i\tau-i\tau m}\,d\tau=0,$ which implies also
\eqref{null2_bis}. This completes the proof of \eqref{null2} in the
case $N=2$. To prove \eqref{null2} in the case $N=1$ write
$$
e^{-i\frac{\tau}{2}\HHH}\tilde{\varphi}_0=\sum_{n\geq 1}
\alpha_{n} e^{-i\tau\left(n+\frac{1}{2}\right)}h_{n}(y),
$$
and argue as above. We leave the details to the reader.

It follows immediately from Lemma \ref{L:compactQ} that the unit ball
of $\left(E,\|\cdot\|_{L^2}\right)$ is compact, concluding the proof of Lemma \ref{L:complex}.
\end{proof}

We next prove Proposition \ref{P:reduction}. Let
$\widetilde{E}=F^{\bot}\cap E$. By definition, $\widetilde{E}$ is
the subspace of functions $\varphi\in L^2$ satisfying $Q(\varphi)=0$
and the orthogonality relations \eqref{ortho_intro}. By Lemma
\ref{L:complex} it is a complex,
finite dimensional vector space.

We argue by contradiction, assuming that the conclusion of
Theorem \ref{T:ortho} does not hold.

\noindent\emph{Step 1. Existence of a nontrivial null-space for
$Q$.} In this step we show that the negation of Theorem
\ref{T:ortho} implies that $\widetilde{E}$ is not reduced to
$\{0\}$. Indeed, in this case, there exists a sequence $\varphi_{n}$
in $L^2$ such that
\begin{equation}
 \label{contraQ}
\forall n,\quad \varphi_{n}\in F^{\bot}\text{ and
}nQ(\varphi_{n})<\left\|\varphi_{n}\right\|_{L^2}=1.
\end{equation}
By Lemma \ref{L:compactQ}, a subsequence of
$\left\{\varphi_{n}\right\}_n$ converges strongly in $L^2$ to some $\psi\in E$. The
condition $\left\|\varphi_{n}\right\|_{L^2}=1$ implies that
$\|\psi\|_{L^2}=1$ and, in particular, that $\psi\neq 0$. Furthermore,
$\varphi_{n}\in F^{\bot}$ for all $n$ and $F^{\bot}$ is closed,
thus, $\psi\in F^{\bot}$, which shows as announced that $\dim
\widetilde{E}\geq 1$.
\medskip

\noindent\emph{Step 2. Stability by the harmonic evolution.}

In this step we show that $\widetilde{E}$ is invariant by
$e^{-i\frac{\tau_0}{2}\HHH}$ for any $\tau_0\in \RR$. As
$\widetilde{E}$ is a complex vector space, it is equivalent to show
that $\widetilde{E}$ is invariant by $S(t_0)=e^{-i\frac{\HHH-N}{2}
\tau_0}$. The space $F$ admits a basis of eigenfunctions of
$\HHH$, thus $F^{\bot}$ is stable by $S(\tau_0)$. To prove that
$E$ is stable by $S(\tau_0)$, we rewrite the equation
\eqref{devt_Q} using the lens transform of \S \ref{SS:lens}
\begin{multline}
 \label{devt_QH}
C_S \left(\int_{\RR^N} |G_0+\varphi|^2\right)^{1+\frac{2}{N}}-
\int_{-\pi/2}^{\pi/2}\int_{\RR^N} \left|e^{-i\frac{N\tau}{2}}G_0
+e^{-i\frac{\tau}{2}\HHH}\varphi\right|^{2+\frac{4}{N}}\,dy\,d\tau\\
=Q(\varphi)+\OOO\left(\|\varphi\|_{L^2}^3\right).
\end{multline}
We will show that the two terms in the first line of \eqref{devt_QH}
do not change when replacing $\varphi$ by $S(\tau_0)\varphi$,
which will imply that
\begin{equation}
 \label{Qinvariant}
Q(S(\tau_0)\varphi)=Q(\varphi),
\end{equation}
and thus, that $E$ and $\widetilde{E}=E\cap F$ are stable by
$S(\tau_0)$).

By mass conservation
\begin{multline}
\int_{\RR^N} \left|G_0+S(\tau_0)\varphi\right|^2
=\int_{\RR^N} \left|e^{-i\frac{N\tau_0}{2}}G_0
+e^{-i\frac{\tau_0}{2}\HHH} \varphi\right|^2\\
=\int \left|e^{-i\frac{\tau_0}{2}\HHH}\left(G_0
+\varphi\right)\right|^2=\int \left|G_0+\varphi\right|^2.
\end{multline}
Similarly,
\begin{multline*}
\int_{-\pi/2}^{\pi/2}\int_{\RR^N} \left|e^{-i\frac{N\tau}{2}}G_0
+e^{-i\frac{\tau}{2}\HHH}S(\tau_0) \varphi\right|^{2+\frac{4}{N}}\\
=\int_{-\pi/2+\tau_0}^{\pi/2+\tau_0}\int_{\RR^N}\left|e^{-i\frac{\tau}{2}\HHH}
\left(G_0+\varphi\right)\right|^{2+\frac{4}{N}}=\int_{-\pi/2}^{\pi/2}\int_{\RR^N}
\left|e^{-i\frac{\tau}{2}\HHH}\left(G_0+\varphi\right)\right|^{2+\frac{4}{N}}.
\end{multline*}
The last equality is consequence of the following known identity
(see e.g. equality (2.5) in \cite{Carles09}), which can be easily
checked by expanding $\varphi$ in the Hilbert basis of $L^2$ given
by the eigenfunctions of $\HHH$:
$$
e^{-i\frac{\pi+\tau}{2}\HHH}\varphi(y)= e^{-iN\frac{\pi}{2}}
e^{-i\frac{\tau}{2}\HHH}\varphi(-y).
$$
This concludes the proof of \eqref{Qinvariant}.

\medskip

\noindent\emph{Step 3. End of the proof.}

We have shown that $e^{-i\frac{\tau}{2}\HHH}$ is a strongly
continuous group of operators on the finite dimensional vector space
$\widetilde{E}$. As a consequence,
$e^{-i\frac{\tau}{2}\HHH}=e^{\tau A}$ for some $A\in
\LLL(\widetilde{E})$ (see for example \cite[Theorem 2.9
p.11]{EnNa00}).

Let $f\in \widetilde E$. Then
$$
\lim_{\tau\to 0} \frac{e^{-i\frac{\tau}{2}
\HHH}f-f}{\tau}=\lim_{\tau\to 0} \frac{e^{\tau
A}f-f}{\tau}=Af.
$$
This shows that $f$ is in the domain of $\HHH$ and that
$Af=-\frac{i}{2}\HHH f$. As a consequence, $\HHH=2iA$ is a
continuous linear operator on $\widetilde{E}$. Using that
$\widetilde{E}$ is finite dimensional, we deduce that $\HHH$ admits
an eigenfunction in $\widetilde{E}$, concluding the proof of
Proposition \ref{P:reduction}. \qed

From now on we treat each dimension separately.

\subsection{1D case}
\label{SS:1d}
In this case, the quadratic form is

\begin{multline}
\label{Q1d}
Q(\varphi) = \sqrt 3 \int|\varphi|^2 \, dy + \frac{4 \sqrt 3}{\sqrt \pi}
\left(\re\int e^{-y^2/2} \varphi(y) \,dy \right)^2\\
 -\frac{9}{\pi} \int_{-\frac \pi 2}^{\frac \pi 2}\int e^{-2y^2} \, \left|e^{-i\frac{\tau}{2}\HHH}\varphi\right|^2 \, dy\, d\tau - \frac{6}{\pi} \re \int_{-\frac \pi 2}^{\frac \pi 2}\int e^{-2y^2}
e^{i\tau}\,\left(e^{-i\frac{\tau}{2}\HHH}\varphi\right)^2\,dy\, d\tau.
\end{multline}

Recall that $h_0$ is the $0$th Hermite function (the eigenfunction
corresponding to $\lambda_0 = 1$), and  $e^{-i \frac{\tau}{2}
\mathcal{H}}h_0 = e^{-i\frac{\tau}{2}}\, e^{-y^2/2}$. Similarly,
$$
h_1(y) ={\sqrt 2} \,y e^{-y^2/2} \quad \rightsquigarrow \quad
e^{-i\frac{\tau}{2}\HHH}h_1(y)= {\sqrt 2} \, e^{-\frac32 i \tau}\, y \, e^{-y^2/2},
$$
and
$$
h_2(y) = \frac1{\sqrt 2}\,(2y^2-1) e^{-y^2/2} \quad \rightsquigarrow
\quad e^{-i\frac{\tau}{2}\HHH}h_2(y)= \frac1{\sqrt 2} \,e^{-\frac52 i \tau} \, (2y^2-1) \,
e^{-y^2/2},
$$
then it is easy to check that
$$Q(h_0)=Q(ih_0)=Q(h_1)=Q(ih_1)=Q(h_2)=Q(ih_2)=0.$$
Note that for the rest of $h_j$, $j \geq 3$, we have $e^{-i\frac{\tau}{2}\HHH}h_j=
e^{-i (2j+1)\frac{\tau}{2}} h_j$, and when computing the quadratic form
$Q(h_j)$, we obtain that by orthogonality of $\{h_j\}$ the second
term in \eqref{Q1d} is zero. Integration in $t$ over the full circle makes the
fourth term vanish, therefore producing
$$
Q(h_j)=\sqrt 3 \int |h_j(y)|^2 \, dy - 9 \int e^{-2y^2}
|h_j(y)|^2 \, dy.
$$
Since $e^{-2y^2}$ is dominated by $e^{-y^2}$, we estimate the second
term by
$$
\int e^{-y^2} |h_j(y)|^2 \, dy = \frac{(2j)!}{2^{2j} (j!)^2}\,
\sqrt\frac{\pi}{2},
$$
(see \cite[Lemma 2.1]{Wang08}). Then, using the following estimate for the central
binomial coefficient
\begin{equation}
 \label{E:centralbinom}
\ds \binom{2m}{m} \leq \frac{4^{m}}{\sqrt{3m+1}},  \quad  m \geq 1,
\end{equation}
we obtain
\begin{align*}
Q(h_j) & \geq \sqrt{3\pi}\left(1- 3 \sqrt \frac32 \frac{(2j)!}{2^{2j} (j!)^2}\, \right)\\
& \geq \sqrt{3\pi}\left(1- \frac{3 \sqrt 3}{\sqrt 2 \sqrt
{3j+1}} \right) > 0,
\end{align*}
for $j > 4$. Explicit computation shows that
$$
Q(h_3)= \frac{2\sqrt \pi}{3 \sqrt 3} 
\quad \text{for} \quad h_3(y)
= \frac1{\sqrt 3}(2y^3-3y)e^{-y^2/2}
$$
and
$$
Q(h_4)= \frac{8\sqrt \pi}{9 \sqrt 3} 
\quad \text{for} \quad h_4(y)
= \frac1{2 \sqrt 6}(4y^4-12y^2+3)e^{-y^2/2},
$$
concluding the proof that $Q(h_j)>0$ for all $j \geq 3$.

\subsection{2D case}
\label{SS:2d}
Recall from \S \ref{SS:harmonic} the definitions of the basis
$h_{jk}$ of eigenfunctions of $\HHH$. By definition $h_{jk}(y) =
h_j(y_1)h_k(y_2)$, where $\{h_j\}_{j\geq 0}$ is the orthogonal system in
$L^2(\cR)$ of eigenfunctions of the $1D$ harmonic oscillator. The function $h_{jk}$ corresponds to the eigenvalue
$\lambda_{m}$ with $m=j+k$, and $\lambda_{m}=2m+2=2(j+k)+2$. For a
fixed $m$ there are $m+1$ independent eigenfunctions $h_{jk} \equiv
h_{j, m-j}$, $0 \leq j \leq m$, corresponding to $\lambda_m$. The
space $F$ is exactly
$$
F=\vect_{\CC}\big\{h_{00},h_{01},h_{10},h_{02}+h_{20}\big\}.
$$
By Proposition \ref{P:reduction}, the proof of Theorem \ref{T:ortho} in $2D$ is reduced to the following:
\begin{prop}
 Assume that $N=2$. Then
\begin{gather}
\label{Qnonzero1}
\text{If }\alpha\neq \beta\text{ or }\gamma\neq 0,
\quad Q\left(\alpha h_{02}+\beta h_{20}+\gamma h_{11}\right)>0.\\
\label{Qnonzero2}
\text{If }m\geq 3\text{ and }\sum_{j=0}^{m} |\alpha_j|^2 \neq 0,
\text{ then }Q\left(\sum_{j=0}^{m} \alpha_j h_{j, m-j}\right) > 0.
\end{gather}
\end{prop}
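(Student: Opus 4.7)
The plan is to exploit the rotation invariance of $\HHH$ by replacing the Cartesian basis $\{h_{j,m-j}\}_{0\leq j\leq m}$ of the $m$-th eigenspace $E_m$ with the angular-momentum basis
$$\phi_{l,m}(r,\theta)=e^{il\theta}\,r^{|l|}\,L_{(m-|l|)/2}^{|l|}(r^2)\,e^{-r^2/2},\qquad l\in\{-m,-m+2,\dots,m\},$$
where $L_n^{\alpha}$ is the generalised Laguerre polynomial. The key point is that because $\int_0^{2\pi}e^{i(l-l')\theta}\,d\theta=2\pi\,\delta_{ll'}$, the family $\{\phi_{l,m}\}_l$ is orthogonal simultaneously in $L^2(\RR^2,dy)$ and in $L^2(\RR^2,e^{-|y|^2}dy)$. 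This double orthogonality is the structural observation that will make $Q$ diagonalise on $E_m$.

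Next I would simplify $Q$ on $E_m$ for $m\geq 1$. Since $\HHH\varphi=(2m+2)\varphi$ we have $e^{-i\tau\HHH/2}\varphi=e^{-i(m+1)\tau}\varphi$, and $\int G_0\varphi=0$ by orthogonality of $\varphi$ to $h_{00}$. Substituting into \eqref{defQH} with $N=2$, the oscillatory cross term becomes $\re\bigl(\int_{-\pi/2}^{\pi/2}e^{-2im\tau}\,d\tau\bigr)\int e^{-|y|^2}\varphi^2\,dy$, whose $\tau$-integral vanishes for every integer $m\geq 1$. Hence
$$Q(\varphi)=\int|\varphi|^2\,dy-4\int e^{-|y|^2}|\varphi|^2\,dy,$$
and combined with the double orthogonality this shows that $Q$ is diagonal on $E_m$ in the basis $\{\phi_{l,m}\}$.

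The diagonal entries can then be computed explicitly. Using the Laguerre generating function $\sum_n L_n^{\alpha}(u)t^n=(1-t)^{-\alpha-1}e^{-ut/(1-t)}$, the integrals $\int_0^\infty u^{|l|}e^{-\beta u}(L_n^{|l|}(u))^2\,du$ for $\beta=1$ and $\beta=2$ both reduce to elementary Gaussian computations, and a short calculation yields
$$\frac{Q(\phi_{l,m})}{\|\phi_{l,m}\|_{L^2}^2}=1-\frac{1}{2^{m-1}}\binom{m}{(m-|l|)/2}.$$

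To conclude I treat the two cases. For $m=2$: the value $l=0$ gives ratio $0$, and a direct expansion shows that $\phi_{0,2}$ is a nonzero scalar multiple of $h_{02}+h_{20}\in F$; the values $l=\pm 2$ give ratio $1/2>0$, and $\phi_{2,2}\pm\phi_{-2,2}$ are proportional to $h_{02}-h_{20}$ and $h_{11}$, so rewriting $\alpha h_{02}+\beta h_{20}+\gamma h_{11}$ in this basis makes the $\phi_{\pm 2,2}$ components vanish exactly when $\alpha=\beta$ and $\gamma=0$, yielding \eqref{Qnonzero1}. For $m\geq 3$: the binomial coefficient is maximised at $|l|\in\{0,1\}$, and the central binomial bound \eqref{E:centralbinom} (applied to $m$ or to $m+1$ depending on the parity of $m$) gives $\binom{m}{\lfloor m/2\rfloor}<2^{m-1}$, so every diagonal entry is strictly positive, proving \eqref{Qnonzero2}. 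The main obstacle I anticipate is establishing the double orthogonality of $\{\phi_{l,m}\}$ cleanly and pinning down the Cartesian-to-polar dictionary for the $m=2$ bookkeeping; the Laguerre integrals and the central-binomial estimate for $m\geq 3$ are routine extensions of the one-dimensional argument from \S\ref{SS:1d}.
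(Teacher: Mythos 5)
Your proposal is correct, and it follows a genuinely different path from the paper. The paper stays in the Cartesian Hermite basis $h_{j,m-j}$, where $Q$ restricted to the $m$-th eigenspace is \emph{not} diagonal: the off-diagonal pairings $\BBB(h_{j,m-j},h_{k,m-k})=\pi F(m,j,k)$ persist, and the authors control them by Cauchy--Schwarz, a Vandermonde-type binomial bound (proved separately in Appendix~\ref{A:combinatorics}), and a table of numerical values for $m=3,\dots,6$. You instead pass to the angular-momentum basis $\phi_{l,m}$, which by rotation invariance is simultaneously orthogonal for the two weights $dy$ and $e^{-|y|^2}dy$, so the restricted form $Q(\varphi)=\int|\varphi|^2-4\int e^{-|y|^2}|\varphi|^2$ (obtained exactly as in the paper by discarding the oscillatory term through $\int_{-\pi/2}^{\pi/2}e^{-2im\tau}d\tau=0$) becomes \emph{diagonal} on each eigenspace. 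Your diagonal entries $1-2^{1-m}\binom{m}{(m-|l|)/2}$ are correct: the Laguerre integral $\int_0^\infty u^\alpha(L_n^\alpha(u))^2e^{-2u}\,du=\frac{(2n+\alpha)!}{(n!)^2\,2^{2n+\alpha+1}}$ and $\|\phi_{l,m}\|_{L^2}^2=\pi(n+\alpha)!/n!$ with $m=2n+\alpha$, $\alpha=|l|$, give exactly that ratio, and for $m=2$ it reproduces the paper's identity $\pi^{-1}Q(\alpha h_{02}+\beta h_{20}+\gamma h_{11})=\tfrac14|\alpha-\beta|^2+\tfrac12|\gamma|^2$. For $m\geq3$ the central-binomial inequality \eqref{E:centralbinom} indeed gives $\binom{m}{\lfloor m/2\rfloor}<2^{m-1}$ in both parities (even case needs $m>2$, odd case needs $m>1$), so every diagonal entry is positive. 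The trade-off: your route buys exact diagonalisation, avoiding the Cauchy--Schwarz loss, the combinatorial Appendix~\ref{A:combinatorics}, and the finite numerical check, at the modest cost of working out the Cartesian-to-polar dictionary and one standard Laguerre integral. Both proofs are complete; yours is arguably cleaner and more structural, since it isolates the $SO(2)$-invariance of $Q$ as the mechanism forcing positivity off the symmetry directions.
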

\begin{proof}
Let $\varphi\in L^2$. By \eqref{defQH} with $N=2$, we have
\begin{multline}
Q(\varphi)=\int_{\RR^2} |\varphi|^2+2\left(\re \int G_0\varphi\right)^2\\
-4\int_{-\pi/2}^{\pi/2}\int_{\RR^2} G_0^2\left|e^{-i\frac \tau
2\HHH}\varphi\right|^2 -2\re \int_{-\pi/2}^{\pi/2}\int_{\RR^2}
G_0^{2}e^{i2\tau}\left(e^{-i\frac \tau 2\HHH} \varphi\right)^2.
\end{multline}
It is easy to check that $Q(h_{00})=0.$

Let $m\geq 1$. Any eigenfunction of $\HHH$ for the eigenvalue $2m+2$ is of the form
\begin{equation}
\label{eigenfunction}
\varphi=\sum_{j=0}^{m} \alpha_j h_{j, m-j}.
\end{equation}
If $\varphi$ is of this form, then the second integral in  $Q(\varphi)$ 
vanishes because of the orthogonality of the $h_{jk}$'s and so does
the last term, since $\int_{-\pi/2}^{\pi/2} e^{i2mt} \, dt = 0$ as
$m\in \NN\setminus \{0\}$.

Recall that the first eigenfunction for $\HHH$ is
$h_{00}(y)=e^{-\frac 12|y|^2}$. Using that
$G_0=\frac{1}{\sqrt{\pi}}e^{-\frac{|y|^2}{2}}$, we obtain
\begin{equation}
\label{Q_on_ortho}
Q(\varphi)=\BBB(\varphi,\varphi),\quad
\BBB(\varphi,\psi)=\re \int_{\RR^2} \varphi\overline{\psi}\\
-4\re \int_{\RR^2} h_{00}^2\varphi\overline{\psi}.
\end{equation}

In particular, if $j+k\geq 1$,
\begin{align*}
Q(h_{jk})  = & \, \Big(\int h_j^2(y_1) \, dy_1
\Big) \Big( \int h_k^2(y_2) \, dy_2 \Big) \\
& - 4 \Big(\int e^{-y_1^2} \, h_j^2(y_1) \, dy_1 \Big)
\Big(\int e^{-y_2^2} \, h_k^2(y_2)\, dy_2 \Big) \\
& = \pi \Big(1- \frac{(2j)!(2k)!}{2^{2(j+k)-1} \, (j!)^2 \,
(k!)^2}\Big),
\end{align*}
where in the last line we used the product of Hermite functions from
\cite[Lemma 2.1]{Wang08}. As expected we get $Q(h_{01})=Q(h_{10})=0$.

Define
$$
G(j,k) = \left\{
\begin{array}{cl} \frac{(j+k)!}{2^{(j+k)-1/2} \,
\sqrt{j!} \, \sqrt{k!} \, \left( \frac{j+k}2\right)!} &
\text{for} \quad j+k - \text{even},\\
0 & \text{for} \quad j+k - \text{odd}.
\end{array}\right.
$$
For a product of two $G$ functions, write
$$
F(m,j,k) = G(j,k)G(m-j,m-k).
$$
Observe that $F$ is symmetric, i.e.,
$$
F(m,j,k) = F(m,k,j) = F(m,m-j,m-k)=F(m,m-k,m-j).
$$
Note as well that
$$
Q(h_{j, m-j}) = \pi \left(1 - F(m,j,j)\right),\quad j\neq
k\Longrightarrow \BBB\left(h_{j,m-j},h_{k,m-k}\right)=\pi F(m,j,k),
$$
and that for $\alpha, \beta, \gamma \in \cC$
$$
\frac{1}{\pi}Q(\alpha h_{02}+\beta h_{20}+ \gamma h_{11})= \frac14 |\alpha -
\beta|^2 + \frac12 |\gamma|^2,
$$
which is equal to zero if and only if $\alpha = \beta$ and
$\gamma=0$. This shows \eqref{Qnonzero1}.

Let us show \eqref{Qnonzero2}.

We have
\begin{align*}
\frac1{\pi}&\,Q\left(\sum_{j=0}^{m}\alpha_j h_{j, m-j}\right)\\
 &=
\sum_{j=0}^{m} |\alpha_j|^2 \left(1-F(m,j,j)\right) - 2 \left( \re
\!\!\!\sum_{\substack{j<k,\\j+k - even}}
\!\!\alpha_j \overline{\alpha}_k F(m, j,k) \right)\\
&\geq \sum_{j=0}^{m} |\alpha_j|^2  - \left( \sum_{j=0}^{m}
|\alpha_j|^2 \, F(m,j,j)+ \!\!\sum_{\substack{j<k,\\j+k - even}}
\!\!(|\alpha_j|^2+ |\alpha_k|^2) F(m, j,k) \right)\\
&\geq \sum_{j=0}^{m} |\alpha_j|^2 
\left(1- \sum_{\substack{k \in[0,m],\\j+k - even}}
F(m,j,k)\right),
\end{align*}
where we used the symmetry of $F$ in the last line. By
Cauchy-Schwarz, for any $j \in [0,m]$ we obtain
\begin{align*}
\mathcal{F}(m,j)&:=\!\!\sum_{\substack{k \in[0,m],\\j+k - even}}
\!\!\!F(m,j,k)\\
 &= \frac2{2^{2m}} \, \sum_{\substack{k \in[0,m],\\j+k
- even}}
\frac{(j+k)!(2m-(j+k))!}{\sqrt{j!k!(m-j)!(m-k)!}
\left(\frac{j+k}2\right)! \left(m- \frac{j+k}2\right)!}\\
& \leq\frac2{2^{2m}} \, \left( \sum_{\substack{k \in[0,m],\\j+k -
even}} 
\binom{j+k}{k}\binom{2m-(j+k)}{m-k}\right)^{1/2}\\
& \qquad \times \left(\sum_{\substack{k \in[0,m],\\j+k - even}}
\binom{j+k}{\frac{j+k}2}
\binom{2m-(j+k)}{m-\frac{j+k}2}\right)^{1/2}\\
& \leq \frac2{4^{m}} \,\,  {\rm I} \times {\rm II}.
\end{align*}

By elementary combinatorial arguments (see Appendix \ref{A:combinatorics}) and \eqref{E:centralbinom}, we estimate the term I 
\begin{align*}
{\rm I}^{\,2}  &
\leq \frac12 \left[ \binom{2m+1}{m+1}+ \binom{2m}{m} \right] 
= \frac12 \left[\frac{m+1}{2m+2} \binom{2m+2}{m+1} +
\binom{2m}{m}\right]\\
& < \frac12 \left[ \frac12 \frac{4^{m+1}}{\sqrt{3(m+1)+1}} +
\frac{4^m}{\sqrt{3m+1}} \right] \\
& = 4^m \left(\frac{1}{\sqrt{3m+4}} + \frac1{2\sqrt{3m+1}} \right).
\end{align*}
For the term II we use \eqref{E:centralbinom}, then decompose into
fractions:
\begin{align*}
{\rm II}^{\, 2} & \leq 4^m \sum_{\substack{k \in[0,m],\\j+k - even}}
\frac1{\sqrt{3 \,(\frac{j+k}2)+1}}\, \frac1{\sqrt{3 \, (m-\frac{j+k}2) +1}}\\
& = \frac{4^m}{\sqrt{3m+2}} \sum_{\substack{k \in[0,m],\\j+k -
even}} \left(\frac1{{3 \,(\frac{j+k}2)+1}} + \frac1{{3 \, (m-\frac{j+k}2) +1}} \right)^{1/2}.
\end{align*}
Using the inequality $\sqrt{a+b}\leq \sqrt{a}+\sqrt{b}$, reindexing the summation and estimating the sum we obtain
\begin{align*}
{\rm II}^{\, 2} & \leq \frac{4^m}{\sqrt{3m+2}} \!\sum_{l=0}^{[m/2]}
\left(\frac1{\sqrt{3l+1}} + \frac1{\sqrt{3(m- l)+1}} \right) \\ 
& = \frac{4^m}{\sqrt{3m+2}} \left(\sum_{l=0}^m \frac1{\sqrt{3l+1}}
+ \frac1{\sqrt{3 \frac{m}2+1}} \, \chi_{\{m-even\}} \right)\\
& \leq \frac{4^m}{\sqrt{3m+2}} \, \left(\frac23 \, (\sqrt{3m+1}-1)+1
+ \frac1{\sqrt{1.5m+1}}\, \chi_{\{m-even\}} \right),
\end{align*}
where $\chi_{\{m-even\}}=1$ if $m$ is even, $0$ if $m$ is odd.
Hence,
\begin{align*}
\mathcal{F}(m,j)
& \leq 2 
\left[\left(\frac{1}{\sqrt{3m+4}} +
\frac1{2\sqrt{3m+1}} \right) \right.\\
& \left. \qquad \times \frac{1}{\sqrt{3m+2}} \,
\left(\frac{2\sqrt{3m+1}+1}3 + \frac1{\sqrt{1.5m+1}}\,
\chi_{\{m-even\}} \right) \right]^{1/2}, \\
\end{align*}
which is less than 1 for $m \geq 7$. For $m=3,4,5,6$ we provide the
values of $\mathcal{F}(m,j)$ in Table \ref{T:small-m} (which
are all smaller than 1).
\begin{table}[h]
\begin{align*}
&\begin{tabular} [c]{|l||l|l|l|l|}
\hline $m=3$ & $\mathcal{F}(3,0)$&$\mathcal{F}(3,1)$&$\mathcal{F}(3,2)$&$\mathcal{F}(3,3)$\\
\hline  & $0.841$ & $0.591$ & $0.591$ & $0.841$\\
\hline
\end{tabular}\\
&\begin{tabular} [c]{|l||l|l|l|l|l|}
\hline $m=4$ & $\mathcal{F}(4,0)$&$\mathcal{F}(4,1)$&$\mathcal{F}(4,2)$&$\mathcal{F}(4,3)$&$\mathcal{F}(4,4)$\\
\hline  & $0.785$ & $0.5$ & $0.664$ & $0.5$ & $0.785$\\
\hline
\end{tabular}\\
&\begin{tabular} [c]{|l||l|l|l|l|l|l|} \hline $m=5$ &
$\mathcal{F}(5,0)$&$\mathcal{F}(5,1)$&$\mathcal{F}(5,2)$&$\mathcal{F}(5,3)$
&$\mathcal{F}(5,4)$&$\mathcal{F}(5,5)$\\
\hline  & $0.718$ & $0.492$ & $0.573$ & $0.573$ & $0.492$ & $0.718$\\
\hline
\end{tabular}\\
&\begin{tabular} [c]{|l||l|l|l|l|l|l|l|} \hline $m=6$ &
$\mathcal{F}(6,0)$&$\mathcal{F}(6,1)$&$\mathcal{F}(6,2)$&$\mathcal{F}(6,3)$
&$\mathcal{F}(6,4)$&$\mathcal{F}(6,5)$&$\mathcal{F}(6,6)$\\
\hline  & $0.673$ & $0.454$ & $0.563$ & $0.495$ & $0.563$ & $0.454$ & $0.673$\\
\hline
\end{tabular}
\end{align*}
\caption{Values of $\mathcal{F}(m,j)$ for $3 \leq m \leq 6$.}
 \label{T:small-m}
\end{table}
\end{proof}

\appendix

\section{Implicit function theorem and orthogonality conditions}
\label{A:implicit}
In this appendix we prove Claim \ref{C:implicit}.
By explicit computation,
\begin{equation}
\label{G0_identities}
\nabla G_0=-xG_0,\quad \Delta G_0=(|x|^2-N)G_0.
\end{equation}
The preceding identities imply that at the point $(0,\Gamma_{id}, G_0)$:
\begin{gather*}
\frac{\partial U_{\delta}}{\partial\theta_0}=-i\delta G_0,\qquad \frac{\partial U_{\delta}}{\partial\rho_0}=-\frac{N}{2}\delta G_0- \delta x\cdot \nabla G_0=-\frac{N}{2}\delta G_0+\delta|x|^2 G_0,\\
\frac{\partial U_{\delta}}{\partial \xi_0}=-i\delta xG_0,\qquad
\frac{\partial U_\delta}{\partial x_0}=-\nabla G_0= \delta xG_0\\
\frac{\partial U_\delta}{\partial t_0}=-\frac{i}{2} \delta\Delta G_0-i\gamma \delta^{\frac 4N+1}|G_0|^{\frac 4N}G_0=\frac{i}{2}\delta(N-|x|^2)G_0-i\gamma \delta^{\frac 4N+1}|G_0|^{\frac 4N}G_0.
\end{gather*}
Using the equalities
\begin{equation}
\label{norm_G0}
\int G_0^2=1,\quad \int |x|^2G_0^2=\frac{N}{2},\quad \int |x|^4G_0^2=\frac{N(N+2)}{4},
\end{equation}
which follow from the normalization of $G_0$ and \eqref{G0_identities}, we get
that the Jacobian $\begin{pmatrix} \frac{\partial \Phi_{\delta}^k}{\partial \theta_0},\frac{\partial \Phi_{\delta}^k}{\partial \rho_0},\frac{\partial \Phi_{\delta}^k}{\partial \xi_0},\frac{\partial \Phi_{\delta}^k}{\partial x_0},\frac{\partial \Phi_{\delta}^k}{\partial t_0} \end{pmatrix}_{k=1\ldots 5}$ of $\Phi_{\delta}$ with respect to the variables $(\theta_0,\rho_0,\xi_0,x_0,t_0)$ at the point $(0,\Gamma_{id}, G_0)$ is of the form
\begin{equation*}
 \begin{pmatrix}
  -1 & 0 & 0 & 0 & \frac {1 }{4}+\OOO\left(\delta^{4}\right)\\
0 & \frac{1}{2} &0&0&0\\
0 &0 & -\frac{1}{2} &0 & 0\\
0 & 0 & 0&\frac{1}{2}& 0\\
0& 0 & 0 & 0 & -\frac{1}{4}+\OOO\left(\delta^{4}\right)
 \end{pmatrix},\quad
\begin{pmatrix}
  -1 & 0 & 0 & 0 & 0 & 0 & \frac{1}{2}+\OOO\left(\delta^{2}\right)\\
0 & 1 &0&0&0&0&0\\
0 &0 & -\frac{1}{2}&0&0&0 & 0\\
0 &0 & 0&-\frac{1}{2}&0&0 & 0\\
0 & 0 & 0&0& \frac{1}{2}&0& 0\\
0 & 0 & 0&0& 0&\frac{1}{2}& 0\\
0 & 0 & 0 & 0 &0&0& -\frac 12+\OOO\left(\delta^{2}\right)
 \end{pmatrix}
\end{equation*}
in dimensions $N=1$ or $2$ respectively. Using that these matrices are invertible, and that their inverses may be estimated uniformly with respect to $\delta\in (0,\delta_0)$ ($\delta_0$ small), we deduce from the implicit functions theorem that there exists $\eps>0$ and a constant $C>0$ such that for small $\delta$, if $\|f-G_0\|_{L^2}<\eps$, there exists $\left(\theta_{\delta},\rho_{\delta},\xi_{\delta},x_{\delta},t_{\delta}\right)=\left(\theta_{\delta},\Gamma_{\delta}\right)$ such that
\begin{equation*}
|\theta_\delta|+|\rho_\delta-1|+|\xi_\delta|+|x_\delta|+|t_\delta|\leq C\|f-G_0\|_{L^2}\text{ and } \Phi_\delta(\theta_\delta,\Gamma_\delta,f)=0.
\end{equation*}
Applying this to the family $\left\{\delta^{-1} g_{\delta}\right\}_\delta$ of Step 1 in the proof of Proposition \ref{P:close_G}, we get as announced that there exists $(\theta_\delta,\Gamma_\delta)=(\theta_{\delta},\rho_{\delta},\xi_{\delta},x_{\delta},t_{\delta})$ such that
\begin{equation*}
\lim_{\delta\to \infty}|\theta_{\delta}|+|\rho_{\delta}-1|+|\xi_{\delta}|+|x_{\delta}|+|t_{\delta}|=0 \text{ and }\Phi_\delta\left(\theta_{\delta},\Gamma_\delta,\delta^{-1}g_{\delta}\right)=0,
\end{equation*}
concluding the proof.
\section{Constant in 1D and the generating function trick}
 \label{A:const1}
By \eqref{def_c*},
\begin{equation*}
D_1=6 \re \iint |G(t)|^{4} \overline{G}(t) r(t)\,dt\,dx,
\end{equation*}
where $r$ is the solution to
$$
i \partial_t r + \frac12 \Delta r + |G|^4 G = 0, \quad r(0,x)=0.
$$
Let $\LL$ be the lens transform defined in \S \ref{SS:lens}. By the
change of variable $t=\tan\tau$, $x=\frac{y}{\cos\tau}$,
$\tau\in (-\pi/2,\pi/2)$, we get
$$
D_1 = 6\re \int_{\RR}\int_{-\pi/2}^{\pi/2} |\LL
G|^4\,\overline{\LL G}\,\LL r\,d\tau\,dy.
$$
By the example at the end of \S \ref{SS:lens}, $\LL
G=\frac{1}{\pi^{1/4}}e^{-i\tau/2}e^{-y^2/2}$, and thus,
\begin{equation}
 \label{formula_C1*}
D_1 = \frac{6}{\pi^{5/4}}\re \int_{\RR}\int_{-\pi/2}^{\pi/2}
e^{-5y^2/2}e^{i\tau/2}\LL r\,d\tau\,dy.
\end{equation}
Denote $\tilde{r}=\LL r$. An explicit computation shows that
$\tilde{r}$ solves
$$
i\partial_{\tau}\tilde{r}-\frac{1}{2}\HHH\tilde{r}+ \frac{1}{\pi^{
\frac{5}{4}}}e^{-\frac{i}{2}\tau}e^{-\frac{5}{2}y^2}=0,\quad\tilde{r}(0,y)=0.
$$
By Duhamel's formula
\begin{equation}
 \label{duhamel_tr}
\tilde{r}(\tau,y) = \frac{i}{\pi^{5/4}} \, e^{-\frac{i \tau}{2}
\mathcal{H}} \int_0^{\tau} e^{-\frac{i \sigma}{2}} \, e^{\frac{i
\sigma}{2}\mathcal{H}} \left( e^{-\frac{5}{2} y^2} \right)\,d\sigma.
\end{equation}
Decompose
\begin{equation*}
e^{-\frac{5}{2}y^2} = \sum_{k\geq 0} \alpha_k h_k
\end{equation*}
with $\{h_k\}$'s as in \eqref{E:herm1} or \eqref{E:herm2}, and
$$
\mathcal{H} h_k = \lambda_k h_k \equiv (2k+1)\,h_k.
$$
Then the coefficients $\alpha_k$'s are given by
\begin{equation}
 \label{def_alphak}
\alpha_k = \frac1{\sqrt \pi}
\int_{-\infty}^{+\infty}e^{-\frac{5}{2}y^2}\, h_k(y)\,dy.
\end{equation}
Note that for $k$ odd, the eigenfunction $h_k$ is odd, and thus, the
corresponding coefficient $\alpha_k=0$. In the end of this appendix
we compute the rest of (even) coefficients using a generating
function trick of Wang \cite{Wang08} and obtain
\begin{equation}
 \label{E:alpha-2k}
\alpha_{2j} = (-1)^j \, \frac{\sqrt{(2j)!}}{3^j \sqrt{3}\, j!}.
\end{equation}
Since
$$
e^{\frac12 i \sigma \HHH} \left(e^{-\frac52 y^2}\right) = \alpha_0
e^{\frac12 i \sigma} h_0 + \sum_{k \geq 1} \alpha_k e^{i (k+\frac12)\sigma}
h_k,
$$
by \eqref{duhamel_tr} we have
\begin{equation}
 \label{E:r}
\tilde{r}(\tau,y) = \frac{i}{\pi^{5/4}} \, e^{-i \frac{\tau}{2}}
\left( \tau \alpha_0 \, \, h_0(y) - i  \sum_{k \geq 1}
\frac{\alpha_k}{k} (1-e^{-ik\tau}) h_k(y) \right).
\end{equation}
Substituting $\tilde r$ back into \eqref{formula_C1*}, we obtain
that the zeroth term from \eqref{E:r} vanishes when integrating in
$\tau$, and thus,
\begin{align*}
D_1 
&=\frac6{\pi^{\frac{5}{2}}}\re\int\int_{-\frac{\pi}{2}}^{\frac{\pi}{2}}
e^{-\frac52 y^2} \sum_{k \geq 1} \frac{\alpha_k}{k}
(1-e^{-ik\tau}) h_k(y) \, d\tau \, dy\\
& = \frac6{\pi^{\frac{5}{2}}} \sum_{k \geq 1} \frac{\alpha_k}{k} \re
\int_{-\frac{\pi}{2}}^{\frac{\pi}{2}} \left(1 -
e^{-ik\tau}\right)\, d\tau
\int e^{-\frac52 y^2} \, h_k(y)\, dy\\
& = \frac6{\pi^{\frac{5}{2}}} \sum_{k \geq 1} \frac{\alpha_k}{k}
\cdot \pi \cdot \sqrt\pi \alpha_{k},
\end{align*}
where we have used $\int_{-\pi/2}^{\pi/2}e^{-ik\tau}\,d\tau=0$
if $k$ is even, and $\alpha_k=0$ if $k$ is odd. By
\eqref{E:alpha-2k} and keeping only even terms ($k=2j$), we have
\begin{equation}
D_1
=\frac6{\pi} \sum_{j \geq 1} \frac{(\alpha_{2j})^2}{2j}= \frac1{\pi}
\sum_{j\geq1} \frac{(2j)!}{j \, 3^{2j}\,(j!)^2},
\end{equation}
and since $\sum_{j\geq1} \frac{(2j)!}{j \, 9^{j}\,(j!)^2} \approx \,
0.2724 $, we get
$$
D_1 = 
\frac1{\pi} \sum_{k\geq 1}\frac{(2k)!}{k\, 9^{k} \, (k!)^2} \approx
\frac1{\pi} \, 0.2724 \approx 0.0867.
$$

\medskip

\noindent\emph{Proof of \eqref{E:alpha-2k}}.

Here we compute coefficients of decomposition of $e^{-\frac52 x^2}$ in Hermite basis, adapting a method from \cite{Wang08}.
Recall the $k$-th Hermite polynomial $H_k$
$$
h_k(x)=\frac{H_k(x)}{\sqrt{2^k \, k!}}\, e^{-\frac{x^2}{2}}.
$$
We have
\begin{equation}
 \label{E:alpha}
\alpha_k = \frac{1}{\sqrt{2^k \, k! \, \pi}} \int_{-\infty}^{+\infty}H_k(x) \, e^{-3x^2}dx.
\end{equation}
Using the generating function representation
\begin{equation}
 \label{generatingfunction}
e^{2tx-t^2}=\sum_{n=0}^{+\infty}\frac{t^n}{n!} \, H_n(x),
\end{equation}
we observe that it is equivalent to
\begin{equation*}
e^{2\frac{t}{\sqrt{3}} \sqrt{3}x-\left(\frac{t}{\sqrt{3}}\right)^2}
\times e^{-\frac{2}{3}t^2}=\sum_{n=0}^{+\infty}\frac{t^n}{n!}H_n(x),
\end{equation*}
on the other hand, using \eqref{generatingfunction} again on the left side
\begin{equation*}
\sum_{j=0}^{+\infty}\frac{1}{j!}\left(\frac{t}{\sqrt{3}}\right)^j
H_j\left(\sqrt{3}x\right) \times \sum_{k=0}^{+\infty}
\frac{1}{k!}\left(-\frac{2}{3}\right)^k t^{2k} = \sum_{n=0}^{+\infty}\frac{t^n}{n!} \, H_n(x).
\end{equation*}
Expanding the product on the left-hand side and identifying the
powers of $t$, we get
\begin{align*}
\frac{1}{n!} \, H_n(x)
&= \sum_{j+2k=n} \left(-\frac23\right)^k \frac1{j!\,k!\,(\sqrt{3})^j} \, H_j \left(\sqrt{3}x\right)\\
& = \frac{1}{(\sqrt{3})^n}\sum_{j+2k=n}\frac{(-2)^k}{j!\,k!}H_j\left(\sqrt{3}x\right)
\end{align*}
Integrating both sides against $e^{-3x^2}$, we obtain
\begin{align*}
\frac1{n!} \int  H_n(x) \, e^{-3 x^2} \, dx
& = \frac{1}{(\sqrt{3})^n} \sum_{j+2k=n}\frac{(-2)^k}{j!\,k!} \int_{\cR}
H_j \left(\sqrt{3}x\right) \, e^{-3x^2}\, dx\\
& = \frac{1}{(\sqrt{3})^n} \sum_{j+2k=n}\frac{(-2)^k}{j!\,k!} \, \frac1{\sqrt 3} \int_{\cR}
H_0(y) \, H_j (y) \, e^{-y^2}\, dy.\\
\intertext{Thus by \eqref{E:herm3}}
\frac1{n!} \int  H_n(x) \, e^{-3 x^2} \, dx& = \frac{1}{(\sqrt{3})^{n+1}} \sum_{j+2k=n}\frac{(-2)^k}{j!\,k!} \, {\sqrt{2^j\, j!}} \,
\delta_{0j} \sqrt{\pi}\\
& =
\begin{cases}
 0 &\text{ if } n\text{ is odd};\\
\frac{(-2)^k \, \sqrt \pi}{(\sqrt{3})^{2k+1} \, k!} &\text{ if } n \text{ is even, }n=2k.
\end{cases}
\end{align*}
Thus,
$$
\int_{-\infty}^{\infty}H_{2k}(x) \, e^{-3 x^2} \, dx
= \frac{(2k)!}{k!} \, \frac{(-2)^k \, \sqrt \pi}{(\sqrt 3)^{2k+1}},
$$
which by \eqref{E:alpha} implies that
$$
\alpha_{2k} = (-1)^k \frac{\sqrt{(2k)!}}{3^k \sqrt 3 \,k!}.
$$

\section{Constant in 2D}
 \label{A:const2}

\begin{claim}
\begin{equation*}
D_2 = \frac1{2\pi} \ln
\frac43.
\end{equation*}
\end{claim}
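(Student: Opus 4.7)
The plan is to parallel the one-dimensional computation of Appendix \ref{A:const1}: apply the lens transform $\LL$ of \S\ref{SS:lens} to compactify time, solve the resulting equation for $\tilde r = \LL r$ spectrally in the harmonic oscillator basis $\{h_{jk}\}$, and identify the resulting double series in closed form.

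From $\LL G(\tau,y)=\pi^{-1/2}e^{-i\tau}e^{-|y|^2/2}$ and a direct computation of $|G|^2 G$ at $(t,x)=(\tan\tau, y/\cos\tau)$, the factors $\cos\tau$ from the lens change of variables and the Gaussian quadratic in $x$ combine cleanly because $|G|^{4/N}G$ is mass-critical, so that $\tilde r$ solves
\begin{equation*}
i\partial_\tau \tilde r - \tfrac12 \HHH \tilde r + \pi^{-3/2}\,e^{-i\tau}\,e^{-3|y|^2/2}=0, \qquad \tilde r(0,\cdot)=0.
\end{equation*}
Since the product $|G|^2\bar G\,r$ also has mass-critical total conformal degree $2+\frac{4}{N}=4$, all $\cos\tau$ weights from $dt\,dx = \cos^{-4}\tau\,d\tau\,dy$ and from the four $G$-type factors cancel, the $\pm i|y|^2\tan\tau/2$ phases in $r$ and $\bar G$ cancel, and using $|\LL G|^2\overline{\LL G} = \pi^{-3/2}e^{i\tau}e^{-3|y|^2/2}$ one obtains
\begin{equation*}
D_2 = 4\,\re\iint|G|^2\bar G\,r\,dt\,dx = \frac{4}{\pi^{3/2}}\,\re\int_{-\pi/2}^{\pi/2}\!\int_{\RR^2}e^{i\tau}\,e^{-3|y|^2/2}\,\tilde r(\tau,y)\,dy\,d\tau.
\end{equation*}

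Next I would expand $e^{-3|y|^2/2}=\sum_{j,k\geq 0}\alpha_j\alpha_k\,h_{jk}(y)$ by applying the generating-function trick of Appendix \ref{A:const1} to $e^{-3y^2/2}$ in each variable, obtaining $\alpha_{2k+1}=0$ and $\alpha_{2k}=(-1)^k\sqrt{(2k)!}/(\sqrt{2}\cdot 4^k\,k!)$. Duhamel's formula, together with $\HHH h_{jk}=(2(j+k)+2)h_{jk}$, gives the explicit expansion
\begin{equation*}
\tilde r(\tau,y)=\frac{i\alpha_0^2\,\tau}{\pi^{3/2}}\,e^{-i\tau}\,h_{00}(y)+\frac{1}{\pi^{3/2}}\sum_{j+k\geq 1}\frac{\alpha_j\alpha_k}{j+k}\,h_{jk}(y)\,\bigl(e^{-i\tau}-e^{-i\tau(j+k+1)}\bigr).
\end{equation*}
After multiplying by $e^{i\tau}$ and integrating over $\tau\in(-\pi/2,\pi/2)$, the $\tau$-linear contribution vanishes by oddness; in the rest, only pairs with $j$ and $k$ both even survive, so $j+k$ is an even positive integer and $\int_{-\pi/2}^{\pi/2}e^{-i\tau(j+k)}d\tau=0$, leaving only the clean factor $\int_{-\pi/2}^{\pi/2}d\tau=\pi$. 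Combining with $\int_{\RR^2}e^{-3|y|^2/2}h_{2j',2k'}(y)\,dy=\pi\,\alpha_{2j'}\alpha_{2k'}$ and regrouping by $m=j'+k'$ gives
\begin{equation*}
D_2 = \frac{2}{\pi}\sum_{\substack{j',k'\geq 0\\ j'+k'\geq 1}}\frac{\alpha_{2j'}^{\,2}\,\alpha_{2k'}^{\,2}}{j'+k'} = \frac{1}{2\pi}\sum_{m\geq 1}\frac{1}{m\cdot 16^m}\sum_{j=0}^{m}\binom{2j}{j}\binom{2(m-j)}{m-j}.
\end{equation*}

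Finally, the classical convolution identity $\sum_{j=0}^{m}\binom{2j}{j}\binom{2(m-j)}{m-j}=4^m$, obtained by squaring the generating function $(1-4x)^{-1/2}=\sum_n\binom{2n}{n}x^n$, collapses the inner sum to $4^m$, and $\sum_{m\geq 1}\frac{1}{m\cdot 4^m}=\ln\frac{4}{3}$ yields the stated value $D_2=\frac{1}{2\pi}\ln\frac{4}{3}$. The only delicate point is verifying that the conformal weights cancel correctly in the first step; beyond that, everything is a routine harmonic-oscillator spectral expansion finished off by a single Vandermonde-type identity, exactly in the spirit of Appendix \ref{A:const1}.
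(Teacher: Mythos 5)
Your proposal is correct, and it takes a genuinely different route from the paper's Appendix C. The paper proves the identity by direct computation in the original $(t,x)$-variables: it writes $r(t,x)$ explicitly using the formula for the free evolution of a Gaussian, carries out the Gaussian integral in $x$ to reduce everything to a rational function of $s$ and $t$, does a partial-fraction decomposition and integration in $s$, and finally integrates in $t$ via the substitution $t=\tan\tau$ together with the classical log-trigonometric integrals $\int_0^{\pi/2}\ln\cos\tau\,d\tau=-\tfrac{\pi}{2}\ln 2$ and $\int_0^{\pi}\ln(a+b\cos\tau)\,d\tau=\pi\ln\bigl(\tfrac{a+\sqrt{a^2-b^2}}{2}\bigr)$. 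Your proof instead applies the lens transform first and does the whole computation spectrally in the Hermite basis of $\HHH$, paralleling the paper's own $1D$ argument in Appendix B. I have checked the details: the conformal weights do cancel exactly for $N=2$ (the product $|G|^2\bar G\,r$ contributes $\cos^{2N}\tau$ while $dt\,dx$ contributes $\cos^{-N-2}\tau$, net $\cos^{N-2}\tau=1$); the source term $|\LL G|^2\LL G=\pi^{-3/2}e^{-i\tau}e^{-3|y|^2/2}$, hence the forced harmonic-oscillator equation for $\tilde r$, is right; the generating-function computation gives $\alpha_{2k}=(-1)^k\sqrt{(2k)!}/(\sqrt 2\cdot 4^k k!)$, so $\alpha_{2j'}^2\alpha_{2k'}^2=\tbinom{2j'}{j'}\tbinom{2k'}{k'}/(4\cdot 16^{j'+k'})$; the odd coefficients vanish and $\int_{-\pi/2}^{\pi/2}e^{-in\tau}d\tau=0$ for even $n\geq 2$ kills the oscillatory piece; and the convolution identity $\sum_{j=0}^m\tbinom{2j}{j}\tbinom{2(m-j)}{m-j}=4^m$, read off from squaring $(1-4x)^{-1/2}$, collapses the double sum to $\tfrac{1}{2\pi}\sum_{m\geq 1}\tfrac{1}{m\,4^m}=\tfrac{1}{2\pi}\ln\tfrac43$. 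Your approach is more systematic (it treats $1D$ and $2D$ uniformly and would in principle extend to products of more Gaussians), at the cost of requiring the binomial convolution identity; the paper's approach is more elementary but relies on ad hoc partial-fraction manipulations and tabulated log integrals specific to $N=2$.
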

\begin{proof}
Recall from \eqref{def_r} the definition of $r$.
By \eqref{def_c*} we must show
$$\re \iint |G|^2 \, \overline{G} \, r \, dt \, dx =\frac1{8\pi} \ln
\frac43.$$
We will prove this result by direct computation of the integral, which is essentially an integral of a Gaussian function (in $x$) and rational functions (in $s$ and $t$). 

By \eqref{def_G},
$$
r(t,x)=\frac{i}{\pi^{3/2}} \int_0^t \frac{1}{(1+is)(1+s^2)}\,
e^{ i\frac{(t-s)}{2}\Delta}\left(e^{-\frac{|x|^2}2 \,
\frac{(3-is)}{(1+s^2)}} \right) \, ds.
$$
Noting that 
\begin{equation*}
e^{i \frac{t}2 \Delta}\left(e^{-\alpha |x|^2}\right)
=\frac{1}{(1+2\alpha it)^{N/2}}\,e^{-\frac{\alpha |x|^2}{1+2\alpha it}}, \quad \re \alpha>0,
\end{equation*}
we get
$$
r(t,x) = \frac{i}{\pi^{3/2}} \int_0^t
\frac{1}{(1+is)(1+s^2+(s+3i)(t-s))} \, e^{-\frac{|x|^2}2
\frac{3-is}{1+s^2+(s+3i)(t-s)}} \, ds.
$$
Let 
$$A=1+s^2+(s+3i)(t-s)=1+st+3i(t-s),\qquad B=\frac{1}{2}\left(\frac{2}{1+t^2}+\frac{1}{1-it}+\frac{3-is}{A}\right).$$
Thus
$$|G|^2\,\overline{G}\,r=\frac{i}{\pi^3}\int_0^t \frac{1}{(1+t^2)(1-it)(1+is)A}e^{-|x|^2B}\,ds.$$
Integrating in space we obtain
\begin{align}
\label{interm}
\int_{\cR^2} |G|^2 \, \overline{G} \, r \, dx &=\frac{i}{\pi^2}\int_0^t \frac{1}{(1+t^2)(1-it)(1+is)\,A\,B}\,ds\\
\notag
& =\frac{1}{\pi^2}\int_0^{t} \frac{i}{(1-it)(1+is)(3(1+st)+5i(t-s))}\,ds.
\end{align}
By fraction decomposition with respect to the variable $s$,
\begin{multline*}
\re\left[\frac i{(1-it)(1+is)(3(1+st)+5i(t-s))}\right]\\
= \re\left[\frac{1}{8(1+t^2)}\left(\frac{i}{1+is}+\frac{5i-3t}{3+5it+(3t-5i)s}\right)\right]\\
=\frac{1}{8(1+t^2)}\left(\frac{s}{1+s^2}+\frac{25(t-s)-9t(1+ts)}{9(1+ts)^2+25(t-s)^2}\right).
\end{multline*}
Integrating with respect to the variable $s$ and coming back to \eqref{interm} we get:
\begin{equation*}
\operatorname{Re}\left( \int_{\cR^2} |G|^2 \, \overline{G}
\, r \, dx \right) 
= -\frac{1}{16\pi^2} \frac{\ln(1+t^2)+2\ln3 -
\ln(9+25t^2)}{1+t^2}.
\end{equation*}
Finally, we compute the space-time norm:
\begin{align*}
& \int_{-\infty}^\infty \operatorname{Re}\left(
\int_{\cR^2}
|G|^2 \, \overline{G} \, r \, dx \right) \, dt\\
& =
-\frac1{16\pi^2}\left(\int_{-\infty}^{\infty}\frac{\ln(1+t^2)}{1+t^2}\,dt
+2\ln3 \int_{-\infty }^{\infty} \frac{dt}{1+t^2} - \int_{-\infty
}^{\infty}\frac{\ln(9+25t^2)}{(1+t^2)}\,dt\right).
\end{align*}
We have $$\int_{-\infty}^{\infty}\frac{1}{\left( 1+t^{2}\right)}\,dt = \pi.$$
By the change of variable $t=\tan\tau$, $\tau\in (-\pi/2,\pi/2)$ and the classical formulas
$$ \int_0^{\frac{\pi}{2}}\ln(\cos\tau)\,d\tau=-\frac{\pi}{2}\ln 2,\quad \int_0^{\pi}\ln(a+b\cos\tau)\,d\tau=\pi\ln\left(\frac{a+\sqrt{a^2-b^2}}{2}\right),\; a>|b|,$$
one gets
\begin{equation*}
\int_{-\infty }^{\infty}\frac{\ln(9+25t^2)}{(1+t^2)} = 6\pi\ln 2,\quad
\int_{-\infty}^{\infty}\frac{  \ln\left(1+t^{2}\right)  }{\left(
1+t^{2}\right)  }dt= 2\pi \ln 2.
\end{equation*}
We leave the details of the computations to the reader. Combining the preceding equalities, we obtain as announced
\begin{equation*}
\int_{-\infty}^\infty \operatorname{Re}\left( \int_{\cR^2}
|G|^2 \, \overline{G} \, r \, dx \right) \, dt = \frac1{8\pi}
(\ln 4-\ln 3).
\end{equation*}
\end{proof}

\section{Bound of a sum of binomial coefficients}
\label{A:combinatorics}
Let $m\geq 1$ and $j\in \{0,\ldots,m\}$. In this appendix we sketch the proof of the following inequality
\begin{equation}
\label{E:combinatorics}
\sum_{\substack{k\in\{0,\ldots,m\}\\j+k \text{ even}}}\binom{j+k}{j}\binom{2m-(j+k)}{m-j}\leq \frac{1}{2}\binom{2m+1}{m+1}+\frac{1}{2}\binom{2m}{m}. 
\end{equation} 

For $n\in \NN^*$, let $I_{n}=\{1,\ldots, n\}$. Let $\PPP(I_{2m+1})$ be the set of all subsets of $I_{2m+1}$.
Define $O_{m,j}\subset \PPP(I_{2m+1})$ and $E_{m,j}\subset \PPP(I_{2m+1})$ as follows: a subset of $I_{2m+1}$ is in $O_{m,j}$ (respectively, $E_{m,j}$) if it has $m+1$ elements  $a_1<a_2<\ldots<a_{m+1}$ and if $a_{j+1}$ is odd (respectively, even). Then for fixed $j\in \{0,\ldots,m\}$,
$$ \left|O_{m,j}\right|=\sum_{\substack{k\in\{0,\ldots,m\}\\j+k \text{ even}}}\binom{j+k}{j}\binom{2m-(j+k)}{m-j} ,\quad \binom{2m+1}{m+1}=\left|O_{m,j}\right|+\left|E_{m,j}\right|.$$
Let us construct a one-to-one map $\Phi_j$ from $O_{m,j}$ to the disjoint union of $E_{m,j}$ and the set of $m$-elements subsets of $I_{2m}$. Let $S$ be a set which is in $O_{m,j}$, and $a_1<a_2<\ldots<a_{m+1}$ its $m+1$ elements. Then if $j\geq 1$ and $a_j<a_{j+1}-1$, or $j=0$ and $a_1>1$, we denote by $\Phi_j(S)$ the element of $E_{m,j}$ $\{a_1,\ldots,a_{j},a_{j+1}-1,a_{j+2},\ldots,a_{m+1}\}$ (i.e obtained from $S$ by shifting only the element $a_{j+1}$ to the left). If $a_j=a_{j+1}-1$, or $j=0$ and $a_1=1$, we denote by $\Phi_j(S)$ the subset  $\{a_1,\ldots,a_j,a_{j+2},\ldots,a_m\}$ of $I_{2m}$. The mapping $\Phi_j$ is clearly one-to-one: in the first case one can recover $S$ by shifting the $j+1$ element of $\Phi_j(S)$ to the right. In the second case, by adding to the set $\Phi_j(S)$ the element $b_{j}+1$ ($1$ if $j=0$), where $b_j$ is the $j$th element of $\Phi_j(S)$. Finally we obtain:
$$ |O_{m,j}|\leq |E_{m,j}|+\binom{2m}{m}\leq \binom{2m+1}{m+1}-|O_{m,j}|+\binom{2m}{m},$$
which yields \eqref{E:combinatorics}.

\bigskip

\bigskip

\bibliographystyle{alpha} 
\bibliography{toto}

\end{document}